\newtheorem{theorem}{Theorem}[section]
\newtheorem{lemma}[theorem]{Lemma}
\newtheorem{metalemma}[theorem]{Meta Lemma}
\newtheorem{proposition}[theorem]{Proposition}
\newtheorem{corollary}[theorem]{Corollary}
\theoremstyle{definition}
\newtheorem{definition}[theorem]{Definition}
\newtheorem{example}[theorem]{Example}
\newtheorem{convention}[theorem]{Convention}
\newenvironment{thm}{\begin{theorem}}{
\end{theorem}}
\newenvironment{prop}{\begin{proposition}}{
\end{proposition}}
\newenvironment{cor}{\begin{corollary}}{
\end{corollary}}
\newenvironment{lem}{\begin{lemma}}{
\end{lemma}}
\newenvironment{defn}{\begin{definition}}{
\end{definition}}
\newcommand{\noi}{\noindent}
\newcommand{\tr}{\textrm}
\newcommand{\insertimage}[2]
	   {\includegraphics[width=#2\textwidth]{#1}}
\newcommand{\mo}{{-1}}
\newcommand{\pmo}{{\pm 1}}
\newcommand{\gt}{\mapsto}
\renewcommand{\tilde}{\widetilde}
\newcommand{\FRS}{\ensuremath{F_{R(S)}}}
\newcommand{\Z}{\mathbb{Z}}
\newcommand{\bra}{\langle}
\newcommand{\kett}{\rangle}
\newcommand{\ol}{\overline}
\newcommand{\rk}{\tr{rank}}
\newcommand{\ncl}{\tr{ncl}}
\newcommand{\F}{\widetilde{F}}
\newcommand{\x}{\xi}
\newcommand{\stab}{\tr{stab}}
\newcommand{\brakett}[1]{{\bra #1 \kett}}
\newcommand{\showcomments}{yes}
\newcommand{\comment}[1]
	   {\ifthenelse{\equal{\showcomments}{yes}}
	     {\footnotemark\marginpar{\sffamily{\tiny
		   \addtocounter{footnote}{-1}\footnotemark#1
}\normalfont}}{}}
\newcommand{\Hom}{\tr{Hom}_F}
\newcommand{\bel}[1]{\begin{equation}\label{#1}}
\newcommand{\be}{\begin{equation}}
\newcommand{\ee}{\end{equation}}
\newcommand{\bxyk}{\brakett{x,y}}
\newcommand{\stw}{\stab(w)}
\newcommand{\bgwk}{\brakett{\gamma_w}}
\newcommand{\ahat}{\widehat{\alpha}}
\newcommand{\Diag}{\tr{Diag}_F}
\renewcommand{\x}{\ol{x}}
\newcommand{\y}{\ol{y}}
\title{The equation $w(x,y)=u$ over free groups: an algebraic approach} 
\author{Nicholas W.M. Touikan\\Department of Mathematics and Statistics, McGill University\\Montr\'eal, Qu\'ebec, Canada\\\textit{touikan@math.mcgill.ca}} 
\begin{document} 
\maketitle
\abstract{Using the theory developed by Olga Kharlampovich, Alexei
  Miasnikov, and, independently, by Zlil Sela to describe the set of
  homomorphisms of a f.g. group $G$ into a free group $F$, we describe
  the solutions to equations with coefficients from $F$ and unknowns
  $x,y$ of the form $w(x,y)=u$, where $u$ lies in $F$ and $w(x,y)$ is
  a word in $\{x,y\}^{\pm 1}$. We also give an example of a single
  equation whose solutions cannot be described with only one ``level''
  of automorphisms.}
\section{Introduction}
Solving systems of equations over free groups has been a very
important topic in group theory. A major achievement was the algorithm
due to Makanin and Razborov \cite{Makanin-1982, Razborov-1987} which
produces a complete description of the solution set of an arbitrary
finite system of equations over a free group. In practice, however, the
algorithm is quite complicated and does not readily imply the results
of this paper.

Much has already been said about solutions to certain types of systems
of equations. Solutions of systems of equations in one unknown over a
free group were described in 1960 by Lyndon \cite{Lyndon-1960}.  In
1971, Hmelevski{\u\i} gave in \cite{Hmelevskii-eqn} an algorithm to
decide solvability as well as a description of the solutions of
equations in unknowns $x,y$ with coefficients in a free group $F$ of
the form $w(x,y)=u$, and $t(x,F)=u(y,F)$. In 1972 Wicks
\cite{Wicks-1972} also described a method for find all the the
solutions of the equation $w(x,y)=u$. In his paper, Wicks gives a way
to find a finite set of solutions to an equation and shows how to
generate all the possible solutions from this finite set using
automorphisms. It has also been shown by Laura Ciabanu in
\cite{Ciobanu-2007} that there is a polynomial time algorithm to
determine if $w(x,y)=u$ has a solution. So far all the approaches have
been combinatorial.

In this paper we tackle the equation $w(x,y)=u$ from a different point
of view. We will use the theory developed by Olga Kharlampovich,
Alexei Miasnikov, and, independently, by Zlil Sela to describe the set
of homomorphisms of a f.g. group $G$ into a free group $F$. We start
by considering the \emph{fully residually} $F$ groups (also called the
\emph{Limit groups relative to} $F$) corresponding to the equation
$w(x,y)=u$. These groups were shown by Remeslennikov in
\cite{Remeslennikov-1989} to be key in the study of systems of
equations. We then systematically describe the possible
\emph{canonical $F$-automorphisms} of these groups and give the
possible \emph{Hom} (also called \emph{Makanin-Razborov}) diagrams
that arise.

In so doing we get an algebraic proof that solutions to equations of
the form $w(x,y)=u$, can be parametrized by a finite set of
\emph{minimal solutions} and a group of \emph{canonical
automorphisms}, which gives us a very explicit description of the
arising algebraic varieties (see Theorem \ref{thm:main-result}). We
also exhibit an equation $E(F,x,y)=1$ whose solutions cannot be
described this way (see Theorem \ref{thm:two-levels}). In particular,
we recover some of the aforementioned results of Hmelevski{\u\i} and
Wicks, but our description of the solutions is by far the most
transparent. In our opinion this paper also serves as an illustration
of some of the very important ideas and techniques that have recently
been applied fully residually free (or limit) groups.

\subsection{$F$-groups and Algebraic Geometry}
A complete account of the material in this section can be found in
\cite{BMR-1998}.  Fix a free group $F$. An equation in variables $x,y$
over $F$ is an expression of the form \[E(x,y)=1\] where $E(x,y) =
f_1{z_1}^{m_1}\ldots {z_n}^{m_n}f_{n+1}; ~f_i \in F, z_j \in \{x,y\}$
and $m_{k} \in \Z$. By an equation of the form $w(x,y)=u$ we mean an
equation \[{z_1}^{m_1} \ldots {z_n}^{m_n}u^\mo=1\] where $u \in F, z_j
\in \{x,y\}$.

We view an equation as an element of the group $F[x,y]=F*F(x,y)$. A
\emph{solution} of an equation is a substitution
\begin{equation}\label{eqn:solution}x\gt g_1, y\gt g_2;~g_i \in
F\end{equation} so that in $F$ the product $E(g_1,g_2)=_F 1$. A
\emph{system of equations} in variables $x,y$; $S(x,y)=S$; is a subset
of $F[x,y]$ and a solution of $S(x,y)$ is a substitution as in
(\ref{eqn:solution}) so that all the elements of $S(x,y)$ vanish in
$F$.

\begin{defn}
  A group $G$ equipped with a distinguished monomorphism \[
  i:F\hookrightarrow G\] is called an \emph{$F$-group} we denote this
  $(G,i)$. Given $F$-groups $(G_1,i_1)$ and $(G_2,i_2)$, we define an
  $F-$homomorphism to be a homomorphism of groups $f$ such that the
  following diagram commutes:\[\xymatrix{ G_1 \ar[r]^f & G_2 \\ F
    \ar[u]^{i_1} \ar[ur]_{i_2}&}\] We denote by $\Hom(G_1,G_2)$ the
  set of $F$-homomorphisms from $(G_1,i_1)$ to $(G_2,i_2)$.
\end{defn}
 
In the remainder the distinguished monomorphisms will in general be
obvious and not explicitly mentioned. It is clear that every mapping
of the form (\ref{eqn:solution}) induces an $F$-homomorphism
$\phi(g_1,g_2):F[x,y]\rightarrow F$, it is also clear that every $f
\in \Hom(F[x,y],F)$ is induced from such a mapping. It follows that we
have a natural bijective correspondence\[\Hom(F[x,y],F)
\leftrightarrow F\times F=\{(g_1,g_2)|g_i \in F\}\]

\begin{defn}
Let $S=S(x,y)$ be a system of equations. The subset \[V(S)=\{(g_1,g_2)
  \in F\times F| x\gt g_1, y\gt g_2 \tr{~is a solution of~}S\}\]
  is called the \emph{algebraic variety} of $S$.
\end{defn}
We have a natural bijective correspondence \[\Hom(F[x,y]/ncl(S),F)
\leftrightarrow V(S)\]

\begin{defn} The \emph{radical} of $S$ is the normal subgroup
  \[Rad(S)=\bigcap_{f\in \Hom(F[x,y]/ncl(S),F)}\ker(f)\] and we denote the
   \emph{coordinate group} of $S$ \[\FRS=F[x,y]/Rad(S)\]
\end{defn}
It follows that there is a natural bijective correspondence
\[\Hom(F[x,y]/ncl(S),F)\leftrightarrow \Hom(\FRS,F)\] so that
$V(S)=V(Rad(S))$. We say that $V(S)$ or $S$ is \emph{reducible} if it
is a union \[V(S)=V(S_1)\cup V(S_2); V(S_1)\subsetneq \cup V(S)
\supsetneq V(S_2)\] of algebraic varieties. An $F$-group $G$ is said
to be \emph{fully residually} F if for every finite subset $P\subset
G$ there is some $f_P \in \Hom(G,F)$ such that the restriction of
$f_P$ to $P$ is injective.

\begin{thm}[\cite{BMR-1998}]\label{thm:irred}
$S$ is irreducible if and only if $\FRS$ is fully residually $F$.
\end{thm}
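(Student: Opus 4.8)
The plan is to pass from points of $V(S)$ to $F$-homomorphisms $\FRS\to F$ and prove the two implications separately; we may assume $F$ is nonabelian. I would rely on two elementary facts about the ambient free group $F$: every $F$-homomorphism with target $F$ restricts to the identity on the distinguished copy of $F$, hence is surjective onto $F$; and the normal closure in $F$ of a nontrivial element is nonabelian, so for nontrivial $a,b\in F$ there is some $s\in F$ with $[b,a^{s}]\ne 1$. I also note that $\FRS=F[x,y]/\Rad(S)$ is automatically residually $F$, as $\Rad(S)$ is by definition an intersection of kernels of $F$-homomorphisms into $F$. Call a finite set of nontrivial elements of $\FRS$ \emph{bad} if every $\phi\in\Hom(\FRS,F)$ kills at least one of them, so that $\FRS$ is fully residually $F$ exactly when no bad set exists.

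For ``fully residually $F$ $\Rightarrow$ irreducible'' I would argue by contradiction. Assume $V(S)=V(S_1)\cup V(S_2)$ with both $V(S_i)\subsetneq V(S)$. From $V(S_i)\subseteq V(S)$ one has $\Rad(S)\subseteq\Rad(S_i)$, and the inclusion is proper because $V(\Rad(T))=V(T)$; choose $u_i\in\Rad(S_i)\setminus\Rad(S)$, so the image of $u_i$ in $\FRS$ is nontrivial. Full residual freeness gives one $\phi\in\Hom(\FRS,F)$ killing neither image, and composing it with $F[x,y]\twoheadrightarrow\FRS$ produces the $F$-homomorphism of a point $p\in V(\Rad(S))=V(S)$. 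Then $p$ lies in some $V(S_i)$, so $p$ kills all of $S_i$, hence all of $\Rad(S_i)$, hence $u_i$ --- contradicting the choice of $\phi$. This direction is pure bookkeeping with the definitions.

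For the converse I would prove the contrapositive: a bad set forces $S$ to be reducible. Take a bad set $\{g_1,\dots,g_k\}$ of minimal size; since $\FRS$ is residually $F$ no singleton is bad, so $k\ge 2$. The crucial step is to produce a bad set of size exactly $2$. If $k\ge 3$, minimality supplies $\psi\in\Hom(\FRS,F)$ killing none of $g_2,\dots,g_k$ (and $\psi$ is onto $F$); I would form the left-normed iterated commutator $h=[[\cdots[g_2,g_3^{t_3}],g_4^{t_4}],\cdots,g_k^{t_k}]$, with conjugators $t_j\in\FRS$ still to be chosen. If $\phi$ kills some $g_j$ with $j\ge 2$, the innermost bracket involving $g_j$ becomes trivial and this propagates outward, so $\phi$ kills $h$; hence every $\phi\in\Hom(\FRS,F)$ kills $g_1$ or $h$. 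Choosing the $t_j$ one at a time so that each successive bracket stays nontrivial after applying $\psi$ --- which is precisely where surjectivity of $\psi$ and the nonabelian-normal-closure fact are used --- yields $\psi(h)\ne 1$, so $h\ne 1$ in $\FRS$ and $\{g_1,h\}$ is bad. Finally, from any bad pair $\{g,h\}$ I would lift to $\tilde g,\tilde h\in F[x,y]$ and verify that $V(S)=V(S\cup\{\tilde g\})\cup V(S\cup\{\tilde h\})$ --- every point of $V(S)$ gives a $\phi$ killing $g$ or $h$ --- with both pieces properly contained in $V(S)$, properness again coming from residual $F$-ness of $\FRS$. Hence $S$ is reducible.

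I expect the one genuine obstacle to be the reduction from a size-$k$ bad set to a size-$2$ one: a priori a bad set only tells us that $V(S)$ is covered by $k$ proper algebraic subsets, and repackaging this as an honest two-term decomposition is where one must use the structure of the free group $F$ --- it is the concrete manifestation here of the fact that a finite union of algebraic sets over $F$ is again algebraic. Everything else follows by unwinding the definitions of $\Rad(S)$ and $\FRS$.
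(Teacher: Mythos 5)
The paper states this result without proof, citing \cite{BMR-1998}, so there is no in-paper argument to compare against; judged on its own, your proof is correct and is essentially the standard argument from that reference. The easy direction is indeed pure bookkeeping with $\Rad(S)$ and $V(\Rad(S))=V(S)$, and your reduction of a minimal bad $k$-set to a bad pair via the iterated commutator $[[\cdots[g_2,g_3^{t_3}],\cdots],g_k^{t_k}]$, using surjectivity of $F$-homomorphisms onto $F$ and the nonabelian normal closure of a nontrivial element to keep each bracket alive under $\psi$, is exactly the ``domain'' trick by which Baumslag--Myasnikov--Remeslennikov show that a finite union of algebraic sets over a nonabelian free group is algebraic; your flag that $F$ must be nonabelian for this step is the right caveat and is harmless in the context of the paper.
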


\begin{thm}[\cite{BMR-1998}]\label{thm:factor}
Either $\FRS$ is fully residually $F$ or \[V(S)=V(S_1)\cup\ldots
V(S_n)\] where the $V(S_i)$ are irreducible and there are canonical
epimorphisms $\pi_i:\FRS\rightarrow F_{R(S_i)}$ such that each $f \in
\Hom(\FRS,F)$ factors through some $\pi_i$.
\end{thm}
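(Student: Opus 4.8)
The plan is to deduce the whole statement from the fact that free groups are \emph{equationally Noetherian}: every system of equations over $F$ has the same solution set as some finite subsystem of it (this is the one genuinely non-formal input, and it is available from \cite{BMR-1998}). Granting it, I would first observe that the Zariski topology on $F\times F$ — whose closed sets are precisely the algebraic varieties $V(T)$ for $T\subseteq F[x,y]$ — is a Noetherian topological space, i.e.\ satisfies the descending chain condition on closed sets. Indeed, replacing each $T_i$ by $\Rad(T_i)$ turns a strictly decreasing chain $V(T_1)\supsetneq V(T_2)\supsetneq\cdots$ into a strictly ascending chain of normal subgroups $\Rad(T_1)\subsetneq\Rad(T_2)\subsetneq\cdots$ of $F[x,y]$; their union $N$, read as a system of equations, is cut out by finitely many of its elements, all lying in a single $\Rad(T_k)$, which forces $V(T_j)=V(T_k)$ for every $j\ge k$, a contradiction.

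Next I would invoke the standard structure theory of Noetherian spaces: every closed subset is a finite, irredundant union of irreducible closed subsets, its irreducible components. Applied to $V(S)$ this gives $V(S)=V(S_1)\cup\cdots\cup V(S_n)$ with each $V(S_i)$ irreducible, where one may take $S_i$ to be the system of all equations vanishing on the $i$-th component (so that $\Rad(S_i)=S_i$, and $F_{R(S_i)}$ is well defined and depends only on the component). If $n=1$, so that $V(S)$ is itself irreducible, then $\FRS$ is fully residually $F$ by Theorem \ref{thm:irred}, and we are in the first alternative; so from now on assume $V(S)$ is reducible.

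It remains to produce the maps $\pi_i$ and the factorization. Since $V(S_i)\subseteq V(S)$, every solution of $S_i$ is a solution of $S$, hence $\Rad(S)\subseteq\Rad(S_i)$; the identity map of $F[x,y]$ therefore induces canonical epimorphisms $\pi_i\colon\FRS=F[x,y]/\Rad(S)\twoheadrightarrow F[x,y]/\Rad(S_i)=F_{R(S_i)}$. Each $V(S_i)$ is irreducible, so Theorem \ref{thm:irred} gives that each $F_{R(S_i)}$ is fully residually $F$. Finally, given $f\in\Hom(\FRS,F)$, it corresponds under the bijection $\Hom(\FRS,F)\leftrightarrow V(S)$ to a point $p\in V(S)$; since $V(S)=\bigcup_i V(S_i)$ we have $p\in V(S_j)$ for some $j$, which means the homomorphism $F[x,y]\to F$ determined by $p$ kills $\ncl(S_j)$ and hence $\Rad(S_j)$. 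As all of $f$, $\pi_j$, and this last map are induced from the same map on $F[x,y]$, we get $f=\bar f\circ\pi_j$ for a suitable $\bar f\in\Hom(F_{R(S_j)},F)$, which is exactly the asserted factorization.

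The hard part is really the equational Noetherianity of $F$, equivalently the descending chain condition on algebraic varieties over a free group; once that is in hand everything else is the formal yoga of Noetherian spaces together with the dictionary variety $\leftrightarrow$ radical $\leftrightarrow$ coordinate group and Theorem \ref{thm:irred}. That input is classical — it follows, for instance, from Guba's embedding of $F$ into $SL_2$ over a suitable commutative ring combined with the Hilbert basis theorem — and in the framework of \cite{BMR-1998} it may simply be cited rather than reproved here.
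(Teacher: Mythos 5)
Your argument is correct: the paper itself gives no proof of this theorem (it is quoted from \cite{BMR-1998}), and your route --- equational Noetherianity of $F$, hence the descending chain condition on varieties, hence decomposition of $V(S)$ into finitely many irreducible components, combined with the dictionary between varieties, radicals and coordinate groups and with Theorem \ref{thm:irred} --- is precisely the standard argument from that reference. The only genuinely non-formal input is, as you say, the equational Noetherianity of free groups, which you correctly identify and source.
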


\begin{cor}\label{cor:obvious}
  If $F[x,y]/ncl(S)$ is fully residually $F$ then $\FRS =
  F[x,y]/ncl(S)$.
\end{cor}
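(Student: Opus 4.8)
The plan is to unwind the two quotients involved and observe that the hypothesis is precisely what makes them coincide. Write $G = F[x,y]/\ncl(S)$. First I would record the (trivial) inclusion $\ncl(S) \subseteq \Rad(S)$: every $f \in \Hom(F[x,y]/\ncl(S),F)$ becomes, upon composition with the quotient map $F[x,y] \twoheadrightarrow G$, a homomorphism $F[x,y] \to F$ whose kernel contains $\ncl(S)$, so $\ncl(S)$ lies in the intersection defining $\Rad(S)$. Consequently there is a canonical epimorphism $q : G \twoheadrightarrow \FRS = F[x,y]/\Rad(S)$, and the statement of the corollary is exactly that $q$ is an isomorphism, i.e. that $\ker q$ is trivial.

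Next I would identify $\ker q$ explicitly. By the isomorphism theorems $\ker q = \Rad(S)/\ncl(S)$ inside $G$, and under the identification of $\Hom(G,F)$ with the set of all homomorphisms $G \to F$, this subgroup is exactly $\bigcap_{f \in \Hom(G,F)} \ker f$. So it remains to show that the $F$-homomorphisms of $G$ into $F$ separate every nontrivial element of $G$ from the identity.

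This is where the hypothesis enters. Given a nontrivial $g \in G$, apply the defining property of a fully residually $F$ group to the finite subset $P = \{g\}$: there is $f_P \in \Hom(G,F)$ whose restriction to $P$ is injective, hence $f_P(g) \neq 1$, so $g \notin \ker f_P$ and a fortiori $g \notin \bigcap_{f \in \Hom(G,F)} \ker f$. Therefore $\ker q = \{1\}$, so $\Rad(S) = \ncl(S)$ and $\FRS = F[x,y]/\ncl(S)$, as claimed. (Note that only the separation of single nontrivial elements from $1$ is used, so residual $F$-ness already suffices; the full strength of ``fully residually $F$'' is not needed.) I do not expect a genuine obstacle here: the only point requiring care is the routine bookkeeping between subgroups of $F[x,y]$ and their images in $G$, since $\Rad(S)$ is defined as a subgroup of $F[x,y]$ whereas the separation argument lives most naturally in $G$.
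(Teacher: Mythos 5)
Your proposal is correct and is exactly the argument the paper leaves implicit (the corollary is stated without proof, as an immediate consequence of the definitions): $\ncl(S)\subseteq \Rad(S)$ always, and full residuality kills the quotient $\Rad(S)/\ncl(S)$. One small slip: with the paper's definition of fully residually $F$, taking $P=\{g\}$ gives you nothing, since any map restricted to a singleton is injective; you should take $P=\{1,g\}$, whence $f_P(g)\neq f_P(1)=1$. With that one-character repair the argument is complete, and your observation that plain residual $F$-ness suffices is accurate.
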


\subsection{Rational Equivalence}
\begin{defn} An \emph{$F$-automorphism} of $F[x,y]$ is an automorphism
  \[\phi:F[x,y]\rightarrow F[x,y]\] such that the restriction $\phi|_F$
  is the identity. Two systems of equations $S,T$ are said to be
  \emph{rationally equivalent} if $\phi(S)=T$, for some $\phi \in
  Aut_F(F[x,y])$.
\end{defn}

\begin{prop}\label{prop:rat-equiv}\begin{itemize} 
  \item [(i)]$Aut_F(F[x,y])$ is generated by the elementary Nielsen
    transformations on the basis $\{F,x,y\}$ that fix $F$ elementwise.
    \item [(ii)] If $S,T$ are rationally equivalent via $\phi \in
      Aut_F(F[x,y])$, then the natural map $\tilde{\phi}$ in the
      commutative diagram below is an isomorphism.
  \end{itemize}
  \[\xymatrix{F[x,y] \ar[r]^\phi \ar[d]^\pi & F[x,y]\ar[d]^\pi\\
    \FRS \ar[r]^{\tilde{\phi}} & F_{R(T)}}\]
\end{prop}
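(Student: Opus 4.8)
The plan is to handle the two parts separately. Part (ii) is what is actually needed downstream and turns out to be quite soft; part (i) is a Nielsen–reduction bookkeeping argument.

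For part (i), fix a basis $\{a_1,\dots,a_k\}$ of $F$, so that $\{a_1,\dots,a_k,x,y\}$ is a basis of the free group $F[x,y]$. Given $\phi\in Aut_F(F[x,y])$, the tuple $(a_1,\dots,a_k,\phi(x),\phi(y))$ is again a basis of $F[x,y]$, since $\phi$ is an automorphism fixing each $a_i$. I would run the Nielsen reduction algorithm on this tuple. The key observation is that a \emph{length-reducing} elementary transformation can never modify one of the $a_i$: each $a_i$ already has length $1$, and multiplying it on the right by $\phi(x)^{\pm1}$, $\phi(y)^{\pm1}$ or $a_j^{\pm1}$ cannot produce a shorter word, since a shorter word would have length $0$, forcing one of $\phi(x)^{\pm1},\phi(y)^{\pm1}$ to equal $a_i^{-1}$, contradicting that the tuple is a basis of rank $k+2$. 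Hence, whenever the tuple is not Nielsen-reduced, there is a length-reducing elementary transformation that modifies the $x$-slot or the $y$-slot by right multiplication with another entry — and every such transformation is an $F$-automorphism. Running this to completion produces a Nielsen-reduced basis of $F[x,y]\cong F_{k+2}$ whose first $k$ entries are still $a_1,\dots,a_k$; since a Nielsen-reduced basis of a free group of rank $n$ necessarily has total length $n$ (it is minimal and each of the $n$ entries has length $\ge 1$, hence each has length exactly $1$), the remaining two entries must be $x^{\pm1}$ and $y^{\pm1}$ in some order. A bounded number of inversions and at most one transposition — all $F$-automorphisms — return the tuple to $(a_1,\dots,a_k,x,y)$. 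Reading the sequence of moves backwards expresses $\phi$ as a product of elementary $F$-transformations, which is (i). (Alternatively, (i) can be extracted from the known description of the automorphism group of a free product that fixes a free factor elementwise.)

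For part (ii), first dispose of the degenerate case: if $V(S)=\emptyset$ then $V(T)=\emptyset$ as well (a solution of $S$ pulls back to one of $T$ via $\phi$), both coordinate groups are trivial and $\tilde\phi$ is trivially an isomorphism; so assume $V(S)\ne\emptyset$. Recall that $Rad(S)$ is the intersection of the kernels of all $F$-homomorphisms $\psi\colon F[x,y]\to F$ with $S\subseteq\ker\psi$, and similarly for $T$. The heart of the proof is the identity $\phi(Rad(S))=Rad(T)$. For the inclusion ``$\subseteq$'': if $\psi\colon F[x,y]\to F$ is an $F$-homomorphism with $T\subseteq\ker\psi$, then $\psi\circ\phi$ is again an $F$-homomorphism $F[x,y]\to F$ (both $\phi$ and $\psi$ restrict to the identity on $F$), and $(\psi\circ\phi)(S)=\psi(\phi(S))=\psi(T)=\{1\}$, so $Rad(S)\subseteq\ker(\psi\circ\phi)$; hence $\psi(\phi(Rad(S)))=\{1\}$. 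As this holds for every such $\psi$, we get $\phi(Rad(S))\subseteq Rad(T)$. Applying the same reasoning to $\phi^{-1}\in Aut_F(F[x,y])$, which satisfies $\phi^{-1}(T)=S$, yields $\phi^{-1}(Rad(T))\subseteq Rad(S)$, i.e. $Rad(T)\subseteq\phi(Rad(S))$, so the two radicals agree. Consequently the automorphism $\phi$ of $F[x,y]$ descends to an isomorphism $\tilde\phi\colon F[x,y]/Rad(S)\to F[x,y]/Rad(T)$ with inverse induced by $\phi^{-1}$; the square in the statement commutes by construction, and since $\phi|_F=\mathrm{id}$ and the maps $\pi$ are $F$-homomorphisms, $\tilde\phi$ is in fact an $F$-isomorphism.

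I expect no real obstacle in (ii) beyond unwinding the definitions of $Rad$ and $V$; the only points to watch are the empty-variety edge case and the (routine) remark that $F$ genuinely embeds in $F_{R(S)}$ when $V(S)\ne\emptyset$, so that the objects in the diagram are honest $F$-groups. The genuinely fiddly part is (i): one must keep the Nielsen-reduction steps inside $Aut_F(F[x,y])$ throughout and verify that the terminal tuple of the reduction is the standard basis up to inverting $x,y$ and transposing them. I anticipate this bookkeeping, rather than any conceptual difficulty, to be the main thing to get right.
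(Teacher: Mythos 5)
The paper states this proposition without proof, so there is no argument of the author's to compare yours against; I can only assess your proposal on its own terms. Part (ii) is correct and complete: the identity $\phi(Rad(S))=Rad(T)$, obtained by precomposing the defining family of $F$-homomorphisms with $\phi$ and then with $\phi^{-1}$, is exactly the right mechanism, and the descent of $\phi$ to an isomorphism of the quotients (with inverse induced by $\phi^{-1}$) is immediate. Your case split on $V(S)=\emptyset$ is harmless but unnecessary, since the precomposition argument applies verbatim when the defining intersections are empty.

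In part (i) one step is not literally true as stated: a tuple that fails to be Nielsen reduced need not admit a \emph{strictly} length-reducing elementary transformation. Greedy length reduction only guarantees conditions (N0) and (N1) in the sense of Proposition I.2.2 of \cite{Lyndon-Schupp-1977}; to reach a genuinely Nielsen-reduced tuple (which is what your final step uses, via the fact that each standard generator, having length $1$, must lie in $U^{\pm 1}$ for Nielsen-reduced $U$) the standard proof must also employ length-\emph{preserving} moves, with termination controlled by a secondary well-ordering. The argument survives, but you must check that these additional moves can also be taken to leave the $a_i$-slots alone: a move replacing $a_i$ by $a_iu_j^{\pm 1}$ without increasing length would force $|a_iu_j^{\pm1}|\le 1$, and once (N1) holds one has $|a_iu_j^{\pm1}|\ge |u_j|$, so $|u_j|=1$ and hence $|a_iu_j^{\pm1}|\in\{0,2\}$, a contradiction (length $0$ being excluded because the tuple is a basis). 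With that patch the terminal tuple is the standard basis up to inverting and permuting the last two slots, and your reading-backwards step finishes the proof. So (i) is correct in outline, but the termination bookkeeping you yourself flagged as the delicate point is precisely where a real detail is missing as written.
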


\begin{prop}\label{prop:w-primitive} Suppose $w(x,y)$ is a
  primitive (by \emph{primitive} we mean an element that belongs to
  some basis) element of $F(x,y)$, then there exist words $X(u,z),
  Y(u,z)$ such that the set of solutions of $w(x,y)=u$ correspond to
  the set of pairs \[(x,y)=(X(u,z),Y(u,z))\] where $z$ takes arbitrary
  values in $F$.
\end{prop}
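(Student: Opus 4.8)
The plan is to use rational equivalence (Proposition~\ref{prop:rat-equiv}) to reduce $w(x,y)=u$ to the trivial equation $x=u$. Since $w$ is primitive in $F(x,y)$, it can be completed to a basis $\{w(x,y),v(x,y)\}$, so there is an automorphism $\psi$ of $F(x,y)$ with $\psi(x)=w(x,y)$ and $\psi(y)=v(x,y)$. Because $F[x,y]=F*F(x,y)$, the pair $(\mathrm{id}_F,\psi)$ extends, via the universal property of the free product, to an $F$-automorphism $\phi$ of $F[x,y]$ with $\phi|_{F(x,y)}=\psi$. Writing $S=\{xu^{\mo}\}$ and $T=\{w(x,y)u^{\mo}\}$, one checks at once that $\phi(xu^{\mo})=\phi(x)\phi(u)^{\mo}=w(x,y)u^{\mo}$, so $\phi(S)=T$ and the two systems are rationally equivalent.

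Next I would transport solutions along $\phi$. A solution of $T$ is an $F$-homomorphism $\alpha:F[x,y]\to F$ with $\alpha(w(x,y)u^{\mo})=_F1$. Precomposition with $\phi$ yields $\beta:=\alpha\circ\phi$, and $\beta(xu^{\mo})=\alpha(\phi(xu^{\mo}))=\alpha(w(x,y)u^{\mo})=_F1$, so $\beta$ is a solution of $S$. Since $\beta|_F=\alpha|_F=\mathrm{id}_F$, the relation $\beta(x)\beta(u)^{\mo}=_F1$ forces $\beta(x)=u$, while $\beta(y)$ may be any element of $F$; and a solution $\beta$ of $S$ is completely determined by the value $\beta(y)\in F$. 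As $\phi$ is invertible, $\alpha\mapsto\beta$ is a bijection between the solutions of $T$ and the solutions of $S$, so altogether the solutions of $w(x,y)=u$ are in bijection with $F$ via $\alpha\mapsto\beta(y)$.

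Finally I would read off $X$ and $Y$. Set $X(x,y):=\psi^{-1}(x)$ and $Y(x,y):=\psi^{-1}(y)$; these are genuine elements of $F(x,y)$ (no coefficients appear) because $\phi^{-1}|_{F(x,y)}=\psi^{-1}$ maps $F(x,y)$ into itself. Given a solution $\beta$ of $S$ with $\beta(x)=u$ and $\beta(y)=z$, the corresponding solution $\alpha=\beta\circ\phi^{-1}$ of $T$ satisfies $\alpha(x)=\beta(\psi^{-1}(x))=X(u,z)$ and $\alpha(y)=\beta(\psi^{-1}(y))=Y(u,z)$, using only that $\beta$ is a homomorphism sending $x\mapsto u$, $y\mapsto z$. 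Hence the solution set of $w(x,y)=u$ is exactly $\{(X(u,z),Y(u,z)):z\in F\}$, which is the assertion. (Concretely, $X,Y$ are computed by completing $w$ to a basis, inverting the resulting automorphism, and substituting $x=u$, $y=z$.)

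There is no deep obstacle here; the only point needing care is the bookkeeping that identifies ``precompose with the $F$-automorphism $\phi^{-1}$'' with ``substitute words'', i.e.\ that for any homomorphism $\beta$ with $\beta(x)=u$, $\beta(y)=z$ one has $\beta(W(x,y))=W(u,z)$ for every $W\in F(x,y)$. This is just functoriality of evaluation, but it is what turns the abstract isomorphism of Proposition~\ref{prop:rat-equiv} into the explicit parametrization the statement requires.
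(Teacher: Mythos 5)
Your proposal is correct and follows essentially the same route as the paper: reduce $w(x,y)=u$ to $x=u$ via an $F$-automorphism of $F[x,y]$ coming from the primitivity of $w$, observe that the solutions of $x=u$ are exactly $x=u$, $y$ arbitrary, and transport back through the inverse automorphism to obtain the words $X(u,z),Y(u,z)$. Your version just spells out the bookkeeping (extension of $\psi$ to $F[x,y]$ and the identification of precomposition with substitution) that the paper leaves implicit.
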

\begin{proof}
Let $S=\{w(x,y)u\}$. By assumption there is $\phi \in Aut_F(F[x,y])$ that
sends $w(x,y)$ to $x$ and $\phi$ extends to an $F$ automorphism of
$F[x,y]$. This means that $S$ is rationally equivalent to
$T=\{xu^\mo\}$. The first thing to note is that $F_{R(T)}$ is a free
group, hence so is $\FRS$. $\Hom(F_{R(T)},F)$ is given by \[
V(T) = \{(x,y)\in F\times F| x=u, y \in F\}\] the result now follows by
precomposing with $\tilde{\phi}^\mo$, as defined in Proposition
\ref{prop:rat-equiv}.
\end{proof}

\begin{lem}\label{cor:QH-fun} Suppose the free group $F(x,y)$ on
  generators $\{x,y\}$ admits a presentation
  \[F(x,y)=\brakett{\xi,\zeta,p|[\xi,\zeta]p^\mo}\] where $\xi, \zeta,
  p \in F(x,y)$. Then the mapping
  $\phi(\xi)=x,\phi(\zeta)=y,\phi(p)=[x,y]$, extends to an
  automorphism $\phi:F(x,y)\rightarrow F(x,y)$.
\end{lem}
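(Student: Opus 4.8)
The plan is to begin by unpacking what the presentation hypothesis really says about the triple $(\xi,\zeta,p)$. Applying the Tietze transformation that eliminates the generator $p$ via the relator $[\xi,\zeta]p^{-1}$ (i.e. substituting $p=[\xi,\zeta]$), one sees that the abstract group $\langle\xi,\zeta,p\mid[\xi,\zeta]p^{-1}\rangle$ is freely generated by $\{\xi,\zeta\}$. Since by hypothesis this abstract group is $F(x,y)$, with the abstract generators identified with the named elements, we conclude that $\{\xi,\zeta\}$ is a basis of $F(x,y)$ and that the identity $p=[\xi,\zeta]$ holds in $F(x,y)$.

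Next I would invoke the universal property of the free group on the basis $\{\xi,\zeta\}$: there is a unique homomorphism $\phi\colon F(x,y)\to F(x,y)$ with $\phi(\xi)=x$ and $\phi(\zeta)=y$, and it is well defined precisely because $\{\xi,\zeta\}$ freely generates. Its image contains both $x$ and $y$, hence equals $F(x,y)$, so $\phi$ is surjective. Since a finitely generated free group is Hopfian, every surjective endomorphism of $F(x,y)$ is an automorphism, so $\phi\in\mathrm{Aut}(F(x,y))$. (Alternatively: $\phi$ carries the basis $\{\xi,\zeta\}$ onto $\{x,y\}$, and the latter must itself be a basis because $F(x,y)$ has rank $2$; thus $\phi$ sends a basis to a basis.)

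Finally I would check compatibility with the third generator: as $\phi$ is a homomorphism and $p=[\xi,\zeta]$ in $F(x,y)$, we get $\phi(p)=\phi([\xi,\zeta])=[\phi(\xi),\phi(\zeta)]=[x,y]$, so the automorphism $\phi$ indeed realizes the prescribed assignment $\xi\mapsto x$, $\zeta\mapsto y$, $p\mapsto[x,y]$.

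The only step with any content is the first one — extracting from the given presentation that $\{\xi,\zeta\}$ is genuinely a basis of $F(x,y)$; after that, well-definedness of $\phi$ together with the Hopf property of finitely generated free groups finishes the argument, and no serious obstacle remains.
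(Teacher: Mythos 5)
Your proof is correct and follows essentially the same route as the paper's (one-line) argument: verify that the assignment respects the given presentation, then conclude it is an automorphism from surjectivity together with the Hopf property of finitely generated free groups. You are merely more explicit, using a Tietze elimination of $p$ and the universal property of the free group on $\{\xi,\zeta\}$ where the paper just observes that the images satisfy the defining relator.
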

\begin{proof} Notice that the basis elements $x,y$ of $[x,y]$
  obviously satisfy the identity $[x,y][x,y]^\mo=1$, so the mapping
  $\phi$ gives an automorphism.
\end{proof}

\subsection{Splittings}
We assume the reader is familiar with Bass-Serre theory, so we only
describe enough to explain our notation.

\begin{defn} A \emph{graph of groups} $\mathcal{G}(A)$ consists of a
  connected directed graph $A$ with vertex set $VA$ and edges $EA$.
  $A$ is directed in the sense that to each $e\in EA$ there are
  functions $i:EA\rightarrow VA, t:EA\rightarrow VA$ corresponding to
  the \emph{initial and terminal} vertices of edges. To $A$ we
  associate the following:\begin{itemize}
    \item To each $v \in VA$ we assign a \emph{vertex group} $G_v$.
    \item To each $e \in EA$ we assign an \emph{edge group} $G_e$.
    \item For each edge $e \in EA$ we have monomorphisms
      \[\sigma_e:G_e\rightarrow G_{i(e)}, ~\tau_e:G_e\rightarrow
      G_{t(e)}\] we call the maps $\sigma_e,\tau_e$ \emph{boundary
    monomorphisms} and the images of these maps \emph{boundary subgroups}.
  \end{itemize}
\end{defn}

A graph of groups has a fundamental group denoted
$\pi_1(\mathcal{G}(A))$. We say that a
group \emph{splits} as the fundamental group as a graph of groups if
$G=\pi_1(\mathcal{G}(A))$ and refer to the data $D=(G,\mathcal{G}(A))$
as a \emph{splitting}.

\begin{defn}[Moves on $\mathcal{G}(A)$]\label{defn:moves} 
We have the following moves on $\mathcal{G}(A)$ that do not change the
  fundamental group.
\begin{itemize}
\item \emph{Change the orientation of edges} in $\mathcal{G}(A)$, and
  relabel the boundary monomorphisms.
\item \emph{Conjugate boundary monomorphisms}, i.e. replace $\sigma_e$ by
  $\gamma_g\circ \sigma_e$ where $\gamma_g$ denotes conjugation by $g$
  and $g \in G_{i(e)}$.
\item \emph{Slide}, i.e. if there are edges $e,f$ such that
  $\sigma_e(G_e)=\sigma_f(G_f)$ then we change $X$ by setting $i(f)=t(e)$
  and replacing $\sigma_f$ by $\tau_e\circ \sigma_e^{-1} \circ \sigma_f$.
\item \emph{Folding}, i.e. if $\sigma_e(G_e)\leq A \leq G_{i(e)}$, then
  replace $G_{t(e)}$ by $G_{t(e)}*_{\tau_{e}(G_e)}A$, replace $G_e$ by a
  copy of $A$ and change the boundary monomorphism accordingly.
\item \emph{Collapse an edge $e$}, i.e. for some edge $e \in EA$, take
  the subgraph $star(e)=\{i(e),e,t(e)\}$ and consider the quotient of
  the graph $A$, subject to the relation $\sim$ that collapses
  $star(e)$ to a point. The resulting graph $A'=A/\sim$ is again a
  directed graph. Denote the equivalence class $v'=[star(e)] \in A'$,
  then we have $G_{v'}=G_{i(e)}*_{G_e}G_{t(e)}$ or $G_{i(e)}*_{G_e}$
  depending whether $i(e)=t(e)$ or not. For each edge $f$ of $A$
  incident to either $i(e)$ or $t(e)$, we have boundary monomorphisms
  $G_f \rightarrow G_{v'}$ given by $\sigma_f'=i\circ\sigma_f$ or
  $\tau_f'=i\circ\tau_f$, where $i$ is the one of the inclusion
  $G_{t(e)}\subset G_{v'}$ or $G_{i(e)}\subset G_{v'}$.
\item \emph{Conjugation}, i.e. for some $g \in G$ replace all the
  vertex groups by $G_{v}^{g}$ and postcompose boundary monomorphisms
  with $\gamma_g$ (which denotes conjugation by $g$).
\end{itemize}
\end{defn}

\subsection{The cyclic JSJ decomposition}

\begin{defn}\label{defn:dehntwist} An \emph{elementary cyclic splitting} $D$
  of $G$ is a splitting of $G$ as either a free product with
  amalgamation or an HNN extension over a cyclic subgroup. We define
  the \emph{Dehn twist along} $D$, $\delta_D$, as
  follows.\begin{itemize}
    \item If $G=A*_\brakett{\gamma}B$ then
    \[\delta_{D}(x)=\left\{ \begin{array}{ll}
     x & \tr{if}~x \in A\\ x^{\gamma}&\tr{if}~x \in B
      \end{array}\right.\]
    \item If $G= \brakett{A,t|t^\mo \gamma t = \beta}, \gamma, \beta
    \in A$ then \[\delta_{D}(x)=\left\{
\begin{array}{ll} x & \tr{if}~x \in A\\ t\beta &\tr{if}~x=t
      \end{array}\right.\]
\end{itemize}
A Dehn twist generates a cyclic subgroup of $Aut(G)$. A splitting such
that all the edge groups are nontrivial and cyclic is called a
\emph{cyclic splitting}.
\end{defn}

We can generalize the notion of a Dehn twist to arbitrary cyclic
splittings.

\begin{defn}\label{defn:gendehntwist} let $D$ be a cyclic splitting of $G$
  with underlying graph $A$ and let $e$ be an edge of of $A$. Then a
  \emph{Dehn twist} along $e$ is an automorphism that can be obtained
  by collapsing all the other edges in $A$ to get a splitting $D'$ of
  $G$ with only the edge $e$ and applying one the applicable
  automorphisms of Definition \ref{defn:dehntwist}
\end{defn}

\begin{defn}\begin{itemize}\item[(i)] A subgroup $H\leq G$ is elliptic
  in a splitting $D$ if $H$ is conjugable into a vertex group of $D$,
  otherwise we say it is hyperbolic.

\item[(ii)]Let $D$ and $D'$ be two elementary cyclic splittings of a
  group $G$ with boundary subgroups $C$ and $C'$, respectively. We say
  that $D'$ is \emph{elliptic in} $D$ if $C'$ is elliptic in $D$.
  Otherwise $D'$ is \emph{hyperbolic} in $D$
\end{itemize}
\end{defn}

A splitting $D$ of an $F$-group is said to be \emph{modulo} $F$ if the
subgroup $F$ is contained in a vertex group.

The following is proved in \cite{R-S-JSJ}:
\begin{thm}\label{thm:hyp-hyp}\begin{itemize}
  \item[(i)] Let $G$ be freely indecomposable (modulo $F$) and let
    $D',D$ be two elementary cyclic splittings of $G$ (modulo $F$).
    $D'$ is elliptic in $D$ if and only if $D$ is elliptic in $D'$.
  \item[(ii)] Moreover if $D'$ is hyperbolic in $D$ then $G$ admits a
    splitting $E$ such that one of its vertex groups is the
    fundamental group $Q=\pi_1(S)$ of a punctured surface $S$ such
    that the boundary subgroups of $Q$ are puncture subgroups.
    Moreover the cyclic subgroups $\brakett{d}, \brakett{d'}$
    corresponding to $D,D'$ respectively are both conjugate into $Q$.
\end{itemize}
\end{thm}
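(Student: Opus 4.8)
This is the Rips--Sela ``hyperbolic--hyperbolic'' theorem, and a complete proof requires the structure theory of finitely presented groups acting on $\mathbb{R}$-trees; here I only indicate the strategy. Pass to Bass--Serre trees: let $T$ and $T'$ be the trees of $D$ and $D'$, so that the edge stabilizers are the conjugates of the cyclic groups $\langle d\rangle$ and $\langle d'\rangle$. Since the splittings are taken modulo $F$, the subgroup $F$ is elliptic in both trees, and every construction below is carried out $G$-equivariantly with $F$ kept elliptic throughout. In this language ``$D'$ is hyperbolic in $D$'' says exactly that $\langle d'\rangle$ acts on $T$ without a global fixed point, hence along an axis, and symmetrically for ``$D$ hyperbolic in $D'$''.

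For part~(i) I would argue by contraposition, ruling out the possibility that $\langle d'\rangle$ is hyperbolic on $T$ while $\langle d\rangle$ is elliptic on $T'$. The idea is the ``common refinement'' circle of ideas: ellipticity of the edge group $\langle d\rangle$ of $D$ on $T'$ lets one blow up each vertex group of $D$ along its induced action on $T'$, and, because the edge group is cyclic and $G$ is freely indecomposable modulo $F$, these induced splittings can be glued back together compatibly along the edge of $D$. One then extracts from the resulting refinement a splitting in which $\langle d'\rangle$ would have to be elliptic (an edge stabilizer of a refinement is elliptic in that refinement), while it is forced to remain hyperbolic because the refinement still collapses onto $T$. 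The delicate point --- and the real content of this direction --- is controlling the behaviour of the hyperbolic generator $d'$ under the gluing and the associated collapse maps so that this contradiction is genuine rather than merely an inequality between translation lengths; this is exactly the Bass--Serre-theoretic core of \cite{R-S-JSJ}.

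For part~(ii), once we are in the genuinely symmetric situation ($D$ and $D'$ mutually hyperbolic), the plan is to manufacture from the pair $(T,T')$ a $G$-action on an $\mathbb{R}$-tree $Y$ that remembers both splittings --- for instance by viewing the family of axes of the conjugates of $\langle d'\rangle$ inside $T$ as a transverse family of lines and dualizing to a band complex, or by rescaling a combined length function and passing to a Gromov--Hausdorff limit in the style of Paulin and Bestvina. Running the Rips machine on $Y$ decomposes it into simplicial, axial and exotic (thin/Levitt) pieces together with surface pieces; the point is that mutual hyperbolicity forces the two families of cyclic subtrees to cross each other transversally, i.e.\ to exhibit interval-exchange behaviour, and --- using crucially that the edge groups are cyclic and that $G$ is freely indecomposable modulo $F$ --- the only component that can absorb such behaviour is a surface component. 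This produces $Q=\pi_1(S)$ with its boundary subgroups equal to the puncture subgroups, with $d$ and $d'$ realized by essential curves on $S$ and hence conjugate into $Q$; promoting $Q$ from a component of $Y$ to an honest vertex group of a splitting $E$ of $G$ is Bestvina--Feighn's enclosing procedure.

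The main obstacle, and the step that genuinely needs the Rips machine rather than soft Bass--Serre arguments, is the last one: showing that transverse crossing of two cyclic splittings can be accounted for \emph{only} by a surface, with no surviving axial or exotic component. This is precisely the technical heart of \cite{R-S-JSJ}, which we are quoting.
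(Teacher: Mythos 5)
The first thing to say is that the paper does not prove this statement at all: it is introduced with ``The following is proved in \cite{R-S-JSJ}'' and used as a black box, so there is no in-paper argument to compare yours against. Your sketch is therefore being measured against the Rips--Sela original, and at that level it is a fair roadmap: Bass--Serre trees for the two splittings with $F$ kept elliptic, a refinement/collapse argument to rule out the asymmetric elliptic--hyperbolic case, and, in the mutually hyperbolic case, an $\mathbb{R}$-tree or band-complex built from the crossing pattern of the two families of axes, analyzed by the Rips machine so that the transverse (interval-exchange) behaviour is forced into a surface component, which is then enclosed as a QH vertex group. You also correctly identify where the real content lies.

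That said, as a proof your text has genuine gaps, and you are candid about them, so let me just name the two that matter. In part (i), the step ``an edge stabilizer of a refinement is elliptic in that refinement'' does not by itself apply to $\langle d'\rangle$: the induced splittings of the vertex groups of $D$ have edge stabilizers of the form $G_v \cap \langle d'\rangle^g$, which may be proper subgroups or trivial, so you still owe an argument that $\langle d'\rangle$ itself (not just these intersections) becomes elliptic after refining --- this is exactly where free indecomposability modulo $F$ and the cyclicity of the edge groups must be used, and you defer it to the citation. In part (ii), the claim that no axial or exotic (thin) component can absorb the transverse crossing is asserted, not argued; this is the technical heart of \cite{R-S-JSJ} and of Bestvina--Feighn's enclosing construction. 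Since the paper itself quotes the theorem without proof, deferring these steps to the reference is acceptable here, but you should present your text as an expository outline of the cited result rather than as a proof.
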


\begin{defn} A subgroup $Q\leq G$ is a quadratically hanging (QH)
  subgroup if for some cyclic splitting $D$ of $G$, $Q$ is a vertex
  group that arises as in item (ii) of Theorem \ref{thm:hyp-hyp}.
\end{defn}

Not every surface with punctures can yield a QH subgroup. By Theorem 3
of \cite{KM-IrredI}, the projective plane with puncture(s) and the
Klein bottle with puncture(s) cannot give QH subgroups. It has also
been noted that surfaces that can give QH subgroups must admit pseudo-Anosov
homeomorphisms.
 
\begin{defn} \begin{itemize} \item[(i)] A QH subgroup $Q$ of $G$ is a
    \emph{maximal QH} (MQH) subgroup if for any other QH subgroup $Q'$
    of $G$, if $Q \leq Q'$ then $Q=Q'$.
  \item[(ii)] Let $D$ be a splitting of $G$ with $Q$ be a QH vertex
    subgroup and let $C$ be a splitting of $Q$ with boundary subgroup
    $\brakett{c}$ then there is a splitting $D'$ of $G$ called a
    \emph{refinement of $D$ along C} such that $D$ is obtained from a
    collapse of $D'$ along an edge whose corresponding group is
    $\brakett{c}$.
\end{itemize}
\end{defn}

\begin{defn}\label{defn:almost-reduced} \begin{itemize}
  \item[(i)] A splitting $D$ is \emph{almost
      reduced} if vertices of valency one and two properly contain the
    images of edge groups, except vertices between two MQH subgroups
    that may coincide with one of the edge groups. 
  \item[(ii)] A splitting $D$ of $G$ is \emph{unfolded} if $D$ can not
      be obtained from another splitting $D'$ via a folding move (See
      Definition \ref{defn:moves}).\end{itemize}
\end{defn}

\begin{thm}[Proposition 2.15 of \cite{KM-JSJ}]\label{thm:jsj}  Let $H$
  be a freely indecomposable modulo $F$ f.g. fully residually $F$
  group. Then there exists an almost reduced unfolded cyclic splitting
  $D$ called the \emph{cyclic JSJ splitting of $H$ modulo $F$} with
  the following properties:\begin{enumerate}
    \item[(1)] Every MQH subgroup of $H$ can be conjugated into a vertex
    group in $D$; every QH subgroup of $H$ can be conjugated into one
    of the MQH subgroups of $H$; non-MQH [vertex] subgroups in $D$ are
    of two types: maximal abelian and non-abelian [rigid], every
    non-MQH vertex group in $D$ is elliptic in every cyclic splitting
    of $H$ modulo $F$.
    \item[(2)] If an elementary cyclic splitting $H=A*_CB$ or $H=A*_C$ is
    hyperbolic in another elementary cyclic splitting, then $C$ can be
    conjugated into some MQH subgroup.
    \item[(3)] Every elementary cyclic splitting $H=A*_CB$ or $H=A*_C$
    modulo $F$ which is elliptic with respect to any other elementary
    cyclic splitting modulo $F$ of $H$ can be obtained from $D$ by a
    sequence of moves given in Definition \ref{defn:moves}.
    \item[(4)] If $D_1$ is another cyclic splitting of $H$ modulo $F$ that
    has properties (1)-(2) then $D_1$ can be obtained from $D$ by a
    sequence of slidings, conjugations, and modifying boundary
    monomorphisms by conjugation (see Definition \ref{defn:moves}.)
  \end{enumerate}
\end{thm}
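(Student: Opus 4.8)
The plan is to reproduce, in the fully residually $F$ setting, the Rips--Sela construction of the canonical cyclic JSJ decomposition, so I will only outline the main steps; the full argument is in \cite{KM-JSJ}. The first point to establish is that a f.g.\ fully residually $F$ group $H$ is finitely presented; this makes the Bestvina--Feighn accessibility machinery available and is what will guarantee that the refinement procedures below terminate after finitely many steps, producing a splitting with finitely many vertices. With that in hand I would consider the collection $\mathcal{E}$ of all elementary cyclic splittings of $H$ modulo $F$. Since $H$ is freely indecomposable modulo $F$, Theorem \ref{thm:hyp-hyp}(i) says that any two members of $\mathcal{E}$ are either mutually elliptic or mutually hyperbolic, so $\mathcal{E}$ breaks into the sub-collection $\mathcal{E}_{\mathrm{ell}}$ of splittings that are elliptic with respect to every other member of $\mathcal{E}$, and its complement.

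For a pair of splittings that are mutually hyperbolic, Theorem \ref{thm:hyp-hyp}(ii) produces an enclosing splitting one of whose vertex groups is the fundamental group of a punctured surface carrying both edge groups. I would take such punctured-surface subgroups to be as large as possible and discard the punctured projective plane and punctured Klein bottle (which by Theorem~3 of \cite{KM-IrredI} cannot be genuine QH pieces); this yields the MQH subgroups, each well defined up to conjugacy, and one checks that distinct MQH subgroups meet only along boundary (puncture) subgroups, so that they can all be enclosed simultaneously as vertex groups of a single splitting. Next I would show that the splittings in $\mathcal{E}_{\mathrm{ell}}$ are pairwise compatible, i.e.\ admit a common refinement; iterating this and invoking accessibility gives a single cyclic splitting $D$ of $H$ modulo $F$ that refines every member of $\mathcal{E}_{\mathrm{ell}}$ and encloses every MQH subgroup, and which, after the obvious collapses and folds, is almost reduced and unfolded. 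Sorting the non-MQH vertex groups into the maximal abelian ones and the remaining rigid ones — the latter being elliptic in every member of $\mathcal{E}$, hence in every cyclic splitting of $H$ modulo $F$ — gives property~(1), while~(2) is simply a restatement of Theorem \ref{thm:hyp-hyp}(ii).

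For property~(3), an elementary cyclic splitting of $H$ modulo $F$ that is elliptic with respect to all others lies in $\mathcal{E}_{\mathrm{ell}}$, so its edge group is elliptic in $D$ and hence conjugate into a vertex group $G_v$; the induced cut of $G_v$ is then either a cut of a rigid or abelian vertex, recovered by sliding and collapsing edges of $D$, or a curve inside an MQH vertex, recovered by refining $D$ along that curve, and in either case $D$ can be transformed into the given splitting by the moves of Definition \ref{defn:moves}. For~(4), given a second splitting $D_1$ with properties~(1)--(2), each edge group of $D_1$ is again elliptic with respect to every member of $\mathcal{E}$ — using~(2) to handle the MQH-related edges and~(1) for the rest — so $D$ and $D_1$ refine the same family of elliptic splittings, and a careful induction on the number of edges, repeatedly applying the hyperbolic--hyperbolic alternative to match up vertex and edge groups, shows that $D_1$ is obtained from $D$ by slidings, conjugations, and conjugations of boundary monomorphisms. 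I expect the main obstacle to be precisely this last uniqueness statement — essentially the connectedness of the space of JSJ-type splittings under the prescribed deformation moves — together with the accessibility input needed to keep every refinement procedure finite.
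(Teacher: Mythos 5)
First, a point of comparison: the paper does not prove this statement at all. It is quoted verbatim as Proposition 2.15 of \cite{KM-JSJ} and used as a black box (most importantly item (4), which drives Proposition \ref{prop:finite-index}). So there is no in-paper argument to measure your sketch against; the honest question is whether your outline would stand on its own as a proof, and it would not. It is a reasonable table of contents for the Rips--Sela/Kharlampovich--Myasnikov construction, but each of the load-bearing steps is asserted rather than argued.

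Concretely: (a) your opening move, that a f.g.\ fully residually $F$ group is finitely presented, is itself a major theorem (it is part of the structure theory in \cite{KM-IrredII} and Sela's work, proved via embeddings into iterated extensions of centralizers or via the shortening argument); it cannot be ``established'' as a preliminary remark, and without it the accessibility input you rely on is unavailable. (b) The claim that the splittings in your elliptic family are ``pairwise compatible, i.e.\ admit a common refinement'' is precisely the hard compatibility lemma of JSJ theory; ellipticity of edge groups in one another's splittings does not by itself yield a common refinement, and the standard proof requires a genuine tree-theoretic argument (resolving one Bass--Serre tree by another) plus an accessibility bound to make the iteration terminate. (c) The assertion that maximal enclosing surface pieces exist, are finite in number, and ``meet only along boundary subgroups'' so that they can be enclosed simultaneously is where the Rips machine for stable actions on real trees enters; Theorem \ref{thm:hyp-hyp}(ii) only handles one hyperbolic--hyperbolic pair at a time. (d) You explicitly defer the uniqueness statement (4) -- connectedness of the deformation space under slides, conjugations, and conjugation of boundary monomorphisms -- calling it the main obstacle; but that is the one clause of the theorem this paper actually needs, so leaving it open leaves the proposal without its conclusion. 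Citing \cite{KM-JSJ} for these steps is legitimate (it is what the paper itself does), but then the proposal is a citation with commentary, not a proof.
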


\begin{defn}\label{defn:canonical-automorphisms}
  (For simplicity we consider only the case where $\FRS$ is freely
  indecomposable modulo $F$.) Given $D$, a cyclic JSJ decomposition of
  $\FRS$ modulo $F$, we define the group $\Delta$ of \emph{canonical
  $F-$automorphisms} of $\FRS$ to be generated by the following:
\begin{itemize}
\item Dehn twists along edges of $D$; or by Dehn twists along edges $e'$
  obtained from refinements of $D$ along cyclic splittings of MQH
  subgroups; that fix $F\leq \FRS$.
\item Automorphisms of the abelian vertex groups that fix edge groups.
\end{itemize}
\end{defn}

The following Theorem is proved in \cite{KM-IrredII,Sela-DiophI}.

\begin{thm} If $\FRS \neq F$ and is freely indecomposable (modulo $F$)
  then it admits a non trivial cyclic JSJ decomposition modulo $F$.
\end{thm}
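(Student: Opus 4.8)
Write $G=\FRS$; it is finitely generated and (as is implicit throughout this section, cf.\ Theorem~\ref{thm:irred}) fully residually $F$, and fix a finite generating set of $G$ containing a basis of $F$. By Theorem~\ref{thm:jsj}, $G$ has a cyclic JSJ splitting $D$ modulo $F$, so the plan is just to show $D$ is \emph{nontrivial}; by property~(3) of that theorem it suffices to produce a single essential cyclic splitting of $G$ modulo $F$. I would open with the elementary remark that every $F$-homomorphism $\psi\colon G\to F$ is onto, since its image contains $\psi(F)=F$; hence an \emph{injective} $F$-homomorphism $G\to F$ is an isomorphism of $F$-groups, and so would force $G=F$.

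Suppose for contradiction that $\Hom(G,F)$ meets only finitely many $F$-conjugacy classes. Since $G$ is fully residually $F$, for each $n$ pick $\psi_n\in\Hom(G,F)$ injective on the ball $B_n$ of radius $n$. By pigeonhole infinitely many of the $\psi_n$ are $F$-conjugate to a single $\phi$, and since conjugation is injective $\phi$ is injective on $B_n$ for infinitely many, hence all, $n$; so $\phi$ is injective and $G=F$, contrary to hypothesis. Therefore $\Hom(G,F)$ has infinitely many $F$-conjugacy classes, and in particular the maximal displacements $\lambda_n^{-1}$ of the generators of $G$ under the $\psi_n$ are unbounded (a uniform bound would confine the $\psi_n$ to finitely many classes).

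Next I would run the Bestvina--Paulin construction. Rescaling the Cayley tree of $F$ by $\lambda_n\to 0$, then passing to a subsequence and to minimal subtrees, the actions of $G$ via the $\psi_n$ converge to a stable action of $G$ on an $\mathbb{R}$-tree $T$ with no global fixed point (the normalization keeps the largest generator displacement equal to $1$). Crucially the displacement of the fixed finitely generated subgroup $F$ is bounded independently of $n$, so after multiplication by $\lambda_n\to 0$ it tends to $0$ and $F$ is elliptic in $T$. Now $F$, hence its quotient $G$, is CSA, and the $G$-action on $T$ is stable and arises as a limit of actions on simplicial trees with trivial edge stabilizers; by the Rips machine in the form used in \cite{KM-IrredII,Sela-DiophI} such an action produces a graph-of-groups decomposition of $G$ with cyclic arc-stabilizer edge groups (and possibly QH surface vertex groups). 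Since $F$ is elliptic it lies in a vertex group after a conjugation move of Definition~\ref{defn:moves}, so this is a nontrivial cyclic splitting of $G$ modulo $F$; hence $D$ is nontrivial.

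The hardest step, I expect, is the last one: extracting from the limit $\mathbb{R}$-tree a splitting whose edge groups are genuinely \emph{cyclic} — not merely abelian, e.g.\ in the degenerate case where $T$ is a line — while keeping $F$ inside a vertex group. This is precisely the input imported from \cite{KM-IrredII,Sela-DiophI}; by comparison the surjectivity remark, the pigeonhole dichotomy, and the fact that $F$ becomes an elliptic subgroup of $T$ are all soft.
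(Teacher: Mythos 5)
The paper does not actually prove this statement: it is quoted verbatim as a theorem of \cite{KM-IrredII,Sela-DiophI}, so there is no internal argument to compare yours against. What you have written is an outline of the standard proof from those sources (a Bestvina--Paulin limit followed by the Rips machine in the setting relative to $F$), and as an outline it is sound, with the genuinely hard content correctly identified and deferred to the citation --- namely, stability of the limit action and the extraction of a splitting with \emph{cyclic} (not merely abelian; the axial case is the problem) edge groups in which $F$ stays elliptic. Two local points should be repaired. First, $G=\FRS$ is not a quotient of $F$ --- it contains $F$ and surjects onto it --- so ``$F$, hence its quotient $G$, is CSA'' is backwards; the correct justification is that $G$ is fully residually $F$, hence fully residually free, hence CSA. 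Second, in the relative setting, postcomposing $\psi\in\Hom(G,F)$ with conjugation by $g\in F$ yields a map whose restriction to $F$ is $\gamma_g$ rather than the identity, so it is no longer an $F$-homomorphism and ``$F$-conjugacy classes of $\Hom(G,F)$'' is not the right bookkeeping device. You do not need it: a uniform bound on $\max_s|\psi(s)|$ over a fixed finite generating set already forces $\Hom(G,F)$ to be a \emph{finite set}, and your diagonal argument applied to that finite set produces an injective, hence (by your surjectivity remark) bijective, $F$-homomorphism and the contradiction $G=F$. This also explains why no minimization over basepoints is available, or needed, in the rescaling step: the basepoint is pinned by the ellipticity of $F$. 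With those corrections your sketch reproduces the argument of the cited references, which is exactly the level of detail the paper itself provides.
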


\subsection{The Structure of $\Hom(\FRS,F)$}

\begin{defn} \emph{A $Hom$ diagram} for $\Hom(G,F)$, denoted $\Diag(G,F)$,
  consists of a finite directed rooted tree $T$ such that the
  root, $v_0$, has no incoming edges and otherwise every vertex has at
  most one incoming edge along with the following data:
  \begin{itemize}
    \item To each vertex, except the root, $v$ of $T$ we
    associate a fully residually $F$ group $F_{R(S_v)}$.
  \item The group associated to each leaf of $T$ is a free product
    $F*H_1*\ldots*H_n$, where the $H_i$ are isomorphic to subgroups of
    $F$. (The $H_i$ can be thought as free variables)
    \item To each edge $e$ with initial vertex $v_i$ and terminal
    vertex $v_t$ we have a proper $F$-epimorphism $\pi_e:F_{R(S_{v_1})}
    \rightarrow F_{R(S_{v_2})}$
    \end{itemize}
  \end{defn} 

  We point out that in the work of Sela, the $Hom$ diagram is called a
  Makanin-Razborov diagram (relative to F) and that our fully
  residually $F$ groups are \emph{limit groups} (relative to F). The
  following theorem gives a finite parametrization of the solutions of
  systems of equations over a free group.

\begin{thm}[\cite{KM-IrredII,Sela-DiophI}]\label{thm:parametrization}
  For any system of equations $S(x_1,\ldots,x_n)$ there exists a $Hom$
  diagram $\Diag(\FRS,F)$ such that for every $f \in \Hom(\FRS,F)$
  there is a path \[v_0,e_1,v_1,e_2,\ldots ,e_{m+1},v_{m+1}\] from the
  root $v_0$ to a leaf $v_{m+1}$ such that\[f = \rho \circ
  \pi_{v_{m+1}} \circ \sigma_{v_{m}} \circ \ldots \circ \sigma_{v_1}
  \circ \pi_{e_1}\] where the $\sigma_{v_j}$ are \emph{canonical}
  $F$-automorphisms of $F_{R(S_{v_j})}$, the $\pi_j$ are epimorphisms
  $\pi_j:F_{R(S_{v_j})} \rightarrow F_{R(S_{v_{j+1}})}$ inside
  $\Diag(\FRS,F)$, and $\rho$ is any $F$-homomorphism
  $\rho:F_{R(S_{v_{m+1}})}\rightarrow F$ from the free group
  $F_{R(S_{v_{m+1}})}$ to $F$.
\end{thm}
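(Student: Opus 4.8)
The plan is to prove this by the Rips--Sela shortening argument combined with an induction on a complexity invariant of fully residually $F$ groups, which builds the $Hom$ diagram $\Diag(\FRS,F)$ one level at a time. By Theorem \ref{thm:factor} we may first replace $S$ by the finitely many irreducible components of $V(S)$, so it suffices to produce a diagram for a single fully residually $F$ coordinate group $G=\FRS$; splitting off free factors reduces further to the case that $G$ is freely indecomposable modulo $F$ (a freely decomposable $G$, or $G=F$, immediately yields a leaf $F*H_1*\cdots*H_n$). Fix generators $g_1,\dots,g_k$ of $G$ and, for $f\in\Hom(G,F)$, set $|f|=\sum_i |f(g_i)|$ with respect to a fixed basis of $F$.

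The crucial ingredient is the \emph{shortening step}: for every $f\in\Hom(G,F)$ there is a canonical $F$-automorphism $\sigma$ of $G$, in the group $\Delta$ of Definition \ref{defn:canonical-automorphisms} associated to the cyclic JSJ splitting of $G$ modulo $F$ provided by Theorem \ref{thm:jsj}, such that $f\circ\sigma$ has minimal length in the $\Delta$-orbit of $f$; and the set of all homomorphisms that are already of minimal length in their $\Delta$-orbit factors through finitely many proper $F$-epimorphisms $\pi_i\colon G\to L_i$ onto fully residually $F$ groups. To see this, suppose towards a contradiction that no finite such family works; then one finds a sequence $f_n\in\Hom(G,F)$, each minimal in its $\Delta$-orbit, whose stable kernel (the set of elements killed by all but finitely many $f_n$) is trivial --- a nontrivial stable kernel would exhibit a proper quotient $G\to L$, and since $F$ is equationally Noetherian only finitely many maximal such $L$ occur, which would complete the step. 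Rescaling the actions of $G$ on the Cayley graph of $F$ induced by the $f_n$ by $1/|f_n|$ and applying the Bestvina--Paulin construction, one extracts a nontrivial isometric action of $G$ on an $\mathbb{R}$-tree $T$ with trivial kernel, stable in the sense of Rips. Rips' structure theorem then decomposes $T$ into simplicial, axial, and surface (interval-exchange) pieces; matching these pieces to the vertices of the JSJ decomposition and applying the modular automorphisms supported on the relevant pieces, one shows that $f_n$ was \emph{not} of minimal length for $n$ large, the desired contradiction.

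Granting the shortening step, the edges out of the root $v_0$ are the $\pi_i\colon G\to L_i$; since each $L_i$ is again fully residually $F$, we apply free decomposition and the same construction to it, obtaining the next level, and so on. By construction every $f\in\Hom(G,F)$ factors as $\rho\circ\pi_{v_{m+1}}\circ\sigma_{v_m}\circ\cdots\circ\sigma_{v_1}\circ\pi_{e_1}$ with the $\sigma_{v_j}$ canonical. Finiteness of the tree follows by assigning to each fully residually $F$ group a complexity --- for instance the lexicographically ordered tuple recording the number of edges of the JSJ, the genera and numbers of boundary components of its MQH vertices, and the ranks of its abelian vertices --- and checking that the epimorphisms produced above strictly decrease it; as this order is well-founded, every branch is finite, and a branch terminates exactly when no further shortening is possible, at which point the group is a free product $F*H_1*\cdots*H_n$ with the $H_i$ free and $\rho$ an arbitrary $F$-homomorphism on it. The main obstacle is the shortening step: proving that an unshortenable sequence forces a nontrivial stable kernel requires the full theory of stable group actions on $\mathbb{R}$-trees --- the Bestvina--Paulin limit and the Rips decomposition --- together with a case-by-case verification that every type of tree component yields a genuinely length-decreasing canonical automorphism, and the bound on the number of the $L_i$ relies on equational Noetherianity of $F$.
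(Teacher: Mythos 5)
The paper does not prove this theorem: it is stated as imported background, cited to \cite{KM-IrredII} and \cite{Sela-DiophI}, so there is no internal proof to compare yours against. Your sketch is a recognizable outline of Sela's construction of the Makanin--Razborov diagram (reduction to irreducible components via Theorem \ref{thm:factor}, splitting off free factors, the shortening argument via stable kernels, Bestvina--Paulin limits and the Rips machine), which is indeed one of the two proofs the citation points to; the Kharlampovich--Myasnikov proof in \cite{KM-IrredII} is quite different in flavour, proceeding algorithmically through Makanin's process and generalized equations rather than through actions on $\mathbb{R}$-trees.

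There is, however, one step in your sketch that would not survive scrutiny as written: the termination of branches. You propose a lexicographic complexity built from the cyclic JSJ decomposition (number of edges, genera and boundary components of MQH vertices, ranks of abelian vertices) and assert that the shortening quotients strictly decrease it. That is not how either source argues, and it is far from clear that it is true: a proper quotient of a fully residually $F$ group need not have a ``simpler'' JSJ in any such sense. The standard argument is the descending chain condition for limit groups --- every sequence of proper epimorphisms of fully residually $F$ groups terminates --- which is itself a nontrivial theorem, proved either from equational Noetherianity of $F$ (so you invoke the right ingredient, but deploy it only for the finiteness of the quotients at each level rather than for the finiteness of the depth) or by a separate compactness argument. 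A second, smaller omission: in the relative setting used here all homomorphisms restrict to the identity on $F$, so the shortening must be carried out modulo $F$; this constrains which pieces of the limiting $\mathbb{R}$-tree can be shortened and is precisely why the automorphisms produced are the canonical $F$-automorphisms of Definition \ref{defn:canonical-automorphisms} attached to the JSJ \emph{modulo} $F$ of Theorem \ref{thm:jsj}. Your sketch elides this point, and it is not cosmetic.
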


\section{The system of equations $S=\{w(x,y)u^{-1}\}$}
\begin{defn}Let $\phi$ be a solution of $S$, then the \emph{rank} of
  $\phi$ is the rank of the subgroup $\brakett{\phi(x),\phi(y)}\leq
  F$.
\end{defn}
If all solutions of $S$ are of rank 1, then $V(S)$ is easy to describe
and is given in Section \ref{sec:easy}. If $S$ has solutions of rank
2, then there will be infinitely many such solutions. For this case we
will prove that $\Diag(\FRS,F)$ correspond to one
of three cases (see Figure \ref{homdiagrams}.)
\begin{figure}
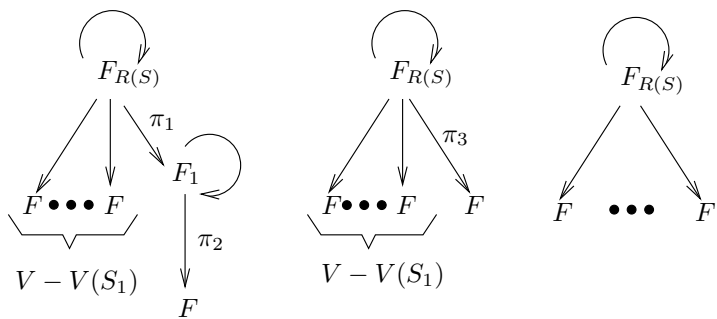

\centering
\psfrag{F}{$F$}
\psfrag{F1}{$\FRS$}
\psfrag{F2}{$\FRS$}
\psfrag{F3}{$\FRS$}
\psfrag{Fs}{$F_1$}
\psfrag{p}{$V-V(S_1)$}
\psfrag{p1}{$\pi_1$}
\psfrag{p2}{$\pi_2$}
\psfrag{p3}{$\pi_3$}
\insertimage{homdiagram}{0.75}
\caption{Hom diagrams corresponding to cases $1., 2.$, and $3.$ of
  Corollary \ref{cor:allowable-jsj}, $\pi_1,\pi_2,\pi_3$ are given in
  Proposition \ref{prop:rank1-solutions}.}
\label{homdiagrams}
\end{figure}
We will moreover describe the possible splittings of $\FRS$ and the
associated canonical automorphisms. This description along with
Theorem \ref{thm:parametrization}, will enable us to describe $V(S)$
as a set of pairs of words in $F$ (see Theorem \ref{thm:main-result}).

\subsection{Easy Cases and Reductions}\label{sec:easy}
By Proposition \ref{prop:w-primitive} we need only concern ourselves
with the case where $w(x,y)$ is not primitive. We state some results that
enable us to simplify matters:

\begin{lem}\label{lem:u-nontrivial} The equation $w(x,y)=1$ doesn't
  admit any rank 2 solutions.
\end{lem}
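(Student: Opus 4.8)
The plan is to argue by contradiction: suppose $\phi: x \mapsto g_1,\, y\mapsto g_2$ is a solution of $w(x,y)=1$ with $\rk\brakett{g_1,g_2}=2$, so $\brakett{g_1,g_2}$ is a free group of rank $2$ inside $F$. Since $w(x,y)=1$ means $w(g_1,g_2)=_F 1$, the word $w$ lies in the kernel of the map $F(x,y)\to\brakett{g_1,g_2}\leq F$ sending $x\mapsto g_1$, $y\mapsto g_2$. But this map is a homomorphism from a free group of rank $2$ onto a free group of rank $2$, hence (free groups being Hopfian) an isomorphism, so its kernel is trivial. Therefore $w(x,y)=1$ already in $F(x,y)$, i.e.\ $w$ is the empty word; but then the ``equation'' $w(x,y)=1$ is vacuous and does not correspond to a genuine system of equations in the sense considered, contradicting the standing hypothesis (we are in the case where $w(x,y)$ is a nontrivial, in fact non-primitive, element of $F(x,y)$ by the reduction preceding the lemma).

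First I would make precise the reduction already in place: by Proposition~\ref{prop:w-primitive} we have reduced to $w(x,y)$ non-primitive, and in particular $w(x,y)\neq 1$ in $F(x,y)$. Then the one real step is the observation that a surjection between free groups of equal finite rank is an isomorphism — this is the Hopfian property of finitely generated free groups, which I would simply cite. The only subtlety is ensuring that $x\mapsto g_1,\,y\mapsto g_2$ genuinely gives a \emph{surjection} onto $\brakett{g_1,g_2}$; but that is immediate from the definition of $\brakett{g_1,g_2}$ as the subgroup generated by $g_1,g_2$. Combining: the map $F(x,y)\twoheadrightarrow\brakett{g_1,g_2}\cong F(x,y)$ is an isomorphism, its kernel is trivial, and any relation $w(g_1,g_2)=1$ forces $w=1$ in $F(x,y)$.

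I do not anticipate a serious obstacle here; the statement is essentially a rephrasing of Hopficity. The only thing to be careful about is the framing: one must note that ``$w(x,y)=1$ with $w\neq1$ in $F(x,y)$'' cannot hold for any rank-$2$ substitution, and hence — since after the reduction $w$ is always a nontrivial element of $F(x,y)$ — the equation $w(x,y)=1$ has no rank-$2$ solutions. It may be worth remarking explicitly that this is precisely why the interesting case is $w(x,y)=u$ with $u\neq 1$ (equivalently, with $u$ not in the relevant commutator-type position), motivating the hypotheses in the sections that follow.
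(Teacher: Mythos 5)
Your proof is correct: the paper states this lemma without any proof, and the Hopficity argument you give --- a rank-$2$ solution makes $(g_1,g_2)$ a basis of $\langle g_1,g_2\rangle$, so a nontrivial word $w$ cannot vanish on it --- is exactly the standard justification the paper is implicitly relying on (and is consistent with the description of $V(S)$ given right after the lemma, where all solutions are powers of a common element). The only point worth flagging is that non-primitivity of $w$ does not by itself rule out $w=1$ in $F(x,y)$ (the trivial element is also non-primitive); you correctly fall back on the standing, if implicit, assumption that $w$ is a nontrivial word.
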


Let $\sigma_x(w)$ and $\sigma_y(w)$ be the exponents sums of $x$ and
$y$ respectively in the word $w(x,y)$. Then it is easy to see that
\bel{eqn:trivial-u}V(S)=\{(r^{n_1},r^{n_2})\in F\times F |r \in F;~
n_1\sigma_x(w)+n_2\sigma_y(w)=0\}\ee In this case we have that
$\FRS\approx F*<t>$ and the mapping $F[x,y]/\ncl(S)\rightarrow \FRS$
is given by the mapping \[\left\{\begin{array}{l} f \mapsto f, f \in
F\\ x \mapsto t^{r_x} \\ y \mapsto t^{r_y} \end{array}\right.\] where
$(r_x,r_y)$ is a generator of the subgroup $\{(a,b)\in \Z\oplus\Z |
a\sigma_x(w) + b\sigma_y(w) = 0\}.$

\begin{lem}\label{lem:w-not-properpower} If $w(x,y)=v(x,y)^n, n>1$
  then either the variety $V(\{w(x,y)u^\mo\})$ is empty or $u=r^n$ for
  some $r \in F$ and we have the equality $V(\{w(x,y)u^\mo\}) =
  V(\{v(x,y)r^\mo\})$.
\end{lem}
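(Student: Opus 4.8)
The plan is to reduce the statement to the standard fact that roots are unique in a free group: if $a^n =_F b^n$ with $n\neq 0$, then $a =_F b$. I would recall its short proof for completeness: $a^n$ commutes with both $a$ and $b$, so $a$ and $b$ lie in the centralizer of $a^n$ in $F$, which is an infinite cyclic subgroup (centralizers in free groups are cyclic), say generated by $z$; writing $a = z^i$ and $b = z^j$ yields $z^{in} = z^{jn}$, and since $F$ is torsion-free this forces $i=j$, hence $a =_F b$.

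Granting this, suppose $V(\{w(x,y)u^\mo\})$ is non-empty and pick a solution $(g_1,g_2)$, so that $v(g_1,g_2)^n =_F w(g_1,g_2) =_F u$. Put $r := v(g_1,g_2)\in F$; then $u = r^n$, which is the first alternative in the statement. (When $u=1$ this merely says $r=1$, by torsion-freeness, and the rest of the argument still goes through verbatim.)

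It then remains to verify the equality $V(\{w(x,y)u^\mo\}) = V(\{v(x,y)r^\mo\})$ as subsets of $F\times F$. For the inclusion $\supseteq$: if $v(h_1,h_2) =_F r$ then $w(h_1,h_2) = v(h_1,h_2)^n =_F r^n = u$, so $(h_1,h_2)$ solves $w(x,y)=u$. For the inclusion $\subseteq$: if $w(h_1,h_2) =_F u = r^n$, then $v(h_1,h_2)^n =_F r^n$, and uniqueness of roots gives $v(h_1,h_2) =_F r$, so $(h_1,h_2)$ solves $v(x,y)=r$. This proves the lemma.

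I do not anticipate a genuine obstacle here: all of the content sits in root-uniqueness, and the only points requiring a moment of care are the degenerate case $u=1$ and the observation that $r$ is independent of the chosen solution (again by root-uniqueness), so that the reduced equation $v(x,y)=r$ is well defined — which is precisely what permits subsequent sections to assume that $w(x,y)$ is not a proper power.
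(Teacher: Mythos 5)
Your proof is correct; the paper states this lemma without proof, and the argument it implicitly relies on is exactly the one you give, namely uniqueness of $n$-th roots in a free group (centralizers are cyclic and $F$ is torsion-free). Your handling of the degenerate case $u=1$ and the observation that $r$ is independent of the chosen solution are both sound.
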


We will always assume that $w(x,y)$ is not a proper power.  Although
this may seem somewhat contrived, our reason for doing so is twofold:
firstly, requiring that an element is primitive is not enough; in our
theorems we want to exclude the case where $w(x,y)$ is a proper power
of a primitive element as, again, solutions are easy to describe.
Secondly, if $w(x,y)=v(x,y)^n$ with $n$ maximal, then in the cyclic
JSJ splitting of $\FRS$ modulo $F$, the edge group will be generated
by $v(x,y)$ and not $w(x,y)$. For the next result we need the
following theorem:

\begin{thm}[Main Theorem of \cite{Baumslag-1965}] 
\label{thm:Baumslag1965}
Let $w=w(x_1,x_2,\ldots,x_n)$ be an element of a free group $F$ freely
generated by $x_1,x_2,\ldots,x_n$ which is neither a proper power nor
a primitive. If $g_2,g_2,\ldots,g_n, g$ are elements of a free group
connected by the relation\[ w(g_1,g_2,\ldots,g_n)=g^m ~~(m>1)\] then
the rank of the group generated by $g_1,g_2,\ldots,g_n$ is at most
$n-1$.
\end{thm}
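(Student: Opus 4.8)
The plan is to reduce the statement, via Nielsen--Schreier and elementary facts about roots in free groups, to a single primitivity assertion about a subgroup, and then to settle that assertion by analysing the Stallings core graph of the subgroup.

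Write $F=F(x_1,\dots,x_n)$ and assume $w\ne 1$ (the case $w=1$ being degenerate). Let $G$ denote the ambient free group --- we may take $G=\brakett{g_1,\dots,g_n,g}$, which is free --- and let $\phi:F\to G$ be the substitution $x_i\mapsto g_i$, with image $H=\brakett{g_1,\dots,g_n}$. Since $H$ is free and generated by $n$ elements, $\rk(H)\le n$, so it suffices to rule out $\rk(H)=n$; assume it. Then $\{g_1,\dots,g_n\}$ is a generating set of $H$ of cardinality $\rk(H)$, hence a basis, so $\phi:F\to H$ is an isomorphism (in particular $g\ne 1$, as $\phi(w)=g^m$). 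Normalise $g$ by writing $g=g_0^{\,p}$ with $g_0$ not a proper power in $G$ and replacing $(g,m)$ by $(g_0,mp)$; this keeps $\phi(w)=g^m$, $m>1$, and now $g$ is not a proper power. Put $d=\min\{k\ge 1:g^k\in H\}$, finite because $g^m\in H$; then $\brakett{g}\cap H=\brakett{g^d}$, so $d\mid m$ and $\phi(w)=(g^d)^{m/d}$ with $g^d\in H$. If $m/d>1$, this exhibits $\phi(w)$ as a proper power in $H$; applying the isomorphism $\phi^{-1}$, $w$ is then a proper power in $F$ --- against hypothesis. So $m=d$; in particular $d\ge 2$, $g\notin H$, and $\phi(w)=g^d$. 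The whole theorem now reduces to:
\[\text{$g^d$ is a primitive element of $H$,}\]
for then $w=\phi^{-1}(g^d)$ is primitive in $F$, contradicting the hypothesis, and we are forced to conclude $\rk(H)\le n-1$.

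To prove the displayed claim I would pass to the free group $L=\brakett{H,g}\le G$ and study the embedding $H\le L$. Two structural points get things going. First, $\rk(L)\le n$: $L$ is generated by the $n+1$ elements $g_1,\dots,g_n,g$, and were $\rk(L)=n+1$ these would form a basis, forcing $L=H*\brakett{g}$ and hence $\brakett{g}\cap H=1$, contradicting $1\ne g^d\in H$. Secondly, $H$ lies in no proper free factor of $L$: if $L=A*B$ with $H\le A$ then $g\notin A$, so $g$ --- and with it $g^d$ --- would have a syllable in $B$ in its $A*B$-normal form, contradicting $g^d\in H\le A$. The decisive step is then to analyse the reduced loop $\lambda$ in the based Stallings core graph $\Gamma_H$ of $H$ (which has first Betti number $n$) that spells $g^d$: the conditions $g^d\in H$ but $g,\dots,g^{d-1}\notin H$ say precisely that $\lambda$ wraps exactly $d$ times around the $g$-loop of the core graph of $L$ before returning to the base vertex, and I would expect that a careful accounting of the Euler characteristic through the Stallings folds that build the core graph of $L$ out of $\Gamma_H$ wedged with a loop spelling $g$ shows that $\lambda$ may --- after Whitehead moves on $\Gamma_H$ --- be read along a single edge traversed exactly once; that is, that $g^d$ belongs to a basis of $H$.

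Everything up to the displayed claim is routine; the claim itself is the genuine content, and the step I expect to be the main obstacle. It is an assertion about how a $d$-th root of an element of a subgroup sits relative to that subgroup, and it uses both hypotheses on $g$ (not a proper power; $d$ minimal) essentially --- it fails if either is dropped. For particular $w$ it can be extracted from classical results: from Sch\"utzenberger's theorem on commutators when $w=[x,y]$, and from the Lyndon--Sch\"utzenberger theorem on the equation $z^p=x^ay^b$ when $w=x^ay^b$ with $a,b\ge 2$. For a general $w$ that is neither primitive nor a proper power, the folding analysis sketched above (or its one-relator-group reformulation, identifying $L$ with $\brakett{x_1,\dots,x_n,t\mid w(x)\,t^{-d}}$) appears to be needed.
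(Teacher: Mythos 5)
This theorem is imported by the paper from Baumslag's 1965 article and is not proved there, so your proposal has to stand on its own. The reduction you give is correct as far as it goes: assuming $\rk(H)=n$ does force $\phi$ to be an isomorphism, the normalisation of $g$, the identity $\langle g\rangle\cap H=\langle g^d\rangle$, and the divisibility $d\mid m$ are all fine, and the case $m/d>1$ is correctly excluded by the non-power hypothesis on $w$. The two structural observations about $L=\langle H,g\rangle$ (that $\rk(L)\le n$ and that $H$ lies in no proper free factor of $L$) are also correct.

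The gap is the ``decisive step.'' The statement you reduce to --- if $g$ is not a proper power, $g\notin H$, and $d\ge 2$ is minimal with $g^d\in H$, then $g^d$ is primitive in $H$ --- is not a stepping stone toward Baumslag's theorem; it is \emph{equivalent} to it. (Run your reduction backwards: given such $H,g,d$, choose a basis $h_1,\dots,h_n$ of $H$ and write $g^d=w(h_1,\dots,h_n)$; the theorem forces $w$ to be primitive or a proper power, and the proper-power case is killed by uniqueness of roots plus minimality of $d$, leaving $g^d$ primitive.) Your proposed proof of this claim --- that ``a careful accounting of the Euler characteristic through the Stallings folds'' should show the loop spelling $g^d$ can, after Whitehead moves, be read along a single edge traversed once --- is a restatement of what must be proved (Whitehead's characterisation of primitivity), not an argument; neither of your structural facts about $L$ invokes the minimality of $d$ or the non-power hypothesis on $g$ in any way that could force primitivity, and you acknowledge the step is unresolved. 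So the proposal establishes only that the theorem is equivalent to the root-primitivity claim, which is routine. Note also that Baumslag's actual proof uses entirely different technology --- lower central quotients and residual nilpotence of free groups, whence the title of the cited paper --- so the folding route, even if it can be made to work, would be a genuinely new argument that still needs to be supplied.
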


\begin{cor}\label{cor:u-not-properpower} Suppose that $w(x,y)$ is
  neither primitive nor a proper power. If $u=r^n, n>1$ is a proper
  power then the equation $w(x,y)=u$ doesn't have any rank 2
  solutions.
\end{cor}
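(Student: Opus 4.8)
The plan is to reduce the statement to an application of Baumslag's theorem (Theorem \ref{thm:Baumslag1965}). Suppose $x \mapsto g_1$, $y \mapsto g_2$ is a rank $2$ solution of $w(x,y) = u$ with $u = r^n$, $n > 1$. Then in $F$ we have the relation $w(g_1, g_2) = r^n$, and the subgroup $\brakett{g_1, g_2} \leq F$ has rank $2$. Since $w(x,y)$ is assumed neither primitive nor a proper power in $F(x,y)$, Baumslag's theorem applies with $n$ (there called $m$) equal to the exponent and the number of generators being $2$: it yields that the rank of $\brakett{g_1, g_2}$ is at most $2 - 1 = 1$, contradicting the assumption that the solution has rank $2$.

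The one point that requires care is the hypothesis of Baumslag's theorem: it asks that $w$ be neither a proper power nor a primitive \emph{as an element of the free group $F(x,y)$ on the generators $x,y$}, which is exactly what we have assumed about $w(x,y)$. There is a small subtlety in that $r$ need not lie in $\brakett{g_1, g_2}$, but Baumslag's theorem as stated only requires $r$ (the element called $g$) to lie in \emph{some} free group containing $g_1, g_2$ — here that free group is $F$ itself — so this causes no difficulty. Another subtlety is that $r^n$ could a priori equal $1$ if $u = 1$; but $u = r^n$ with $n > 1$ being a proper power presupposes $u \neq 1$ (and in any case the trivial-$u$ case is handled by Lemma \ref{lem:u-nontrivial}), so we may assume $u \neq 1$.

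I expect the main (and essentially only) obstacle to be verifying that the hypotheses of Theorem \ref{thm:Baumslag1965} are met in the form stated, i.e. matching up the two notions of ``proper power'' — the one for $u$ in $F$ and the one for $w$ in $F(x,y)$ — and confirming that ``rank $2$ solution'' translates precisely into ``$\brakett{g_1,g_2}$ has rank $2$'' via the definition of rank of a solution given just before Lemma \ref{lem:u-nontrivial}. Once that dictionary is set up, the corollary follows in a single line.
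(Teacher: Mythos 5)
Your proposal is correct and is essentially identical to the paper's own proof: both argue by contradiction, noting that a rank $2$ solution gives $w(g_1,g_2)=r^n$ with $\brakett{g_1,g_2}$ free of rank $2$, and then apply Theorem \ref{thm:Baumslag1965} to conclude the rank is at most $1$. Your extra remarks about matching the hypotheses of Baumslag's theorem are sound but add nothing beyond what the paper already does implicitly.
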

\begin{proof}
  Suppose not then there is a solution $\phi:\FRS \rightarrow F$ such
  that $\x=\phi{x}, \y=\phi{y}$ and $[\x,\y]\neq 1$ which means that
  $\brakett{\x,\y}$ is free group of rank two. But we have the
  identity $w(\x,\y)=r^n$, which by Theorem \ref{thm:Baumslag1965}
  implies that rank of $\brakett{\x,\y}$ is at most one
  --contradiction.
\end{proof}

\subsection{Possible cyclic JSJ splittings of $\FRS$ and canonical
  automorphisms} 

\begin{lem}\label{lem:nss} Suppose that $w(x,y)$ is neither
  primitive nor a proper power. If $w(x,y)=u$ has a rank 2 solution
  then the group
  \[F[x,y]/\ncl(S) \approx F*_{u=w(x,y)}\brakett{x,y}\] is fully
  residually $F$ and, in particular, we have that \[\FRS =
  F*_{u=w(x,y)}\brakett{x,y}\]
\end{lem}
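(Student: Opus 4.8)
The plan is, in three stages, to (i) identify $G:=F[x,y]/\ncl(S)$ concretely as an amalgam over a \emph{maximal} cyclic subgroup, (ii) extract from the given rank~$2$ solution a retraction of $G$ onto $F$, and (iii) combine a Dehn twist with the standard big powers argument to conclude that $G$ is fully residually $F$ -- whence Corollary~\ref{cor:obvious} gives $\FRS=F[x,y]/\ncl(S)$, which is the ``in particular'' clause. For (i): a rank~$2$ solution of $w(x,y)=u$ forces $u\ne 1$ (otherwise $w(x,y)=1$ would have a rank~$2$ solution, contrary to Lemma~\ref{lem:u-nontrivial}), hence also $w(x,y)\ne 1$; so $u$ and $w(x,y)$ have infinite order in $F$ and in $F(x,y)$ respectively, and the one-relator quotient $\bigl(F*F(x,y)\bigr)/\ncl\!\bigl(w(x,y)u^\mo\bigr)$ is the pushout of $F\xleftarrow{\,u\,}\Z\xrightarrow{\,w(x,y)\,}F(x,y)$ along two injections, i.e.
\[
  G\;\approx\;F*_{u=w(x,y)}\brakett{x,y},
\]
an amalgam over the infinite cyclic subgroup $C$ generated by the identified element $c:=u=w(x,y)$. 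The point on which everything turns is that $C$ is a \emph{maximal} cyclic subgroup of each factor -- maximal in $\brakett{x,y}=F(x,y)$ because $w(x,y)$ is not a proper power, and maximal in $F$ because Corollary~\ref{cor:u-not-properpower} forbids $u$ from being a proper power once a rank~$2$ solution exists -- equivalently, $C$ is self-centralizing in each factor.

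For (ii): a rank~$2$ solution $x\gt g_1,\,y\gt g_2$ is an element $\phi\in\Hom(F[x,y],F)$ killing $w(x,y)u^\mo$, so it descends to an $F$-homomorphism $\bar\phi\colon G\to F$ with $\bar\phi|_F=\mathrm{id}_F$, $\bar\phi(\brakett{x,y})=\brakett{g_1,g_2}$, and $\bar\phi(c)=u$. Since the solution has rank~$2$, $\brakett{g_1,g_2}$ is free of rank~$2$, so the surjection $\brakett{x,y}\to\brakett{g_1,g_2}$ is an isomorphism (free groups are Hopfian). Thus $\bar\phi$ is the identity on the factor $F$ and an embedding on the factor $\brakett{x,y}$; in particular $\bar\phi$ carries two elements of $\brakett{x,y}$ to a commuting pair in $F$ only when they already commute in $\brakett{x,y}$.

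For (iii): let $\delta$ be the Dehn twist along $G=F*_C\brakett{x,y}$ (Definition~\ref{defn:dehntwist}); it fixes $F$ pointwise and conjugates $\brakett{x,y}$ by $c$, so each $\psi_n:=\bar\phi\circ\delta^{\,n}\colon G\to F$ is an $F$-homomorphism that fixes $F$ and sends $\eta\in\brakett{x,y}$ to $u^{-n}\bar\phi(\eta)u^{n}$. It suffices to show that every finite $P\subseteq G\setminus\{1\}$ admits some $n$ with $1\notin\psi_n(P)$ (applying this to the set of $pq^\mo$, $p\ne q$, drawn from an arbitrary finite subset of $G$ then furnishes the injective $F$-homomorphism demanded by the definition of ``fully residually $F$''). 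As $\psi_n(gpg^\mo)=\psi_n(g)\psi_n(p)\psi_n(g)^\mo$, we may assume each $p\in P$ is cyclically reduced in the amalgam. If $p$ lies in a factor then $\psi_n(p)\ne 1$ is clear ($\psi_n(p)=p$ on $F$, a nontrivial $u^{n}$-conjugate of $\bar\phi(p)$ on $\brakett{x,y}$). Otherwise $p$ is an alternating product of at least two syllables, each from $F\setminus C$ or $\brakett{x,y}\setminus C$, and $\psi_n(p)$ is a product in $F$ of the form $a_0\,u^{\pm n}a_1u^{\pm n}\cdots u^{\pm n}a_k$ with $k\ge 2$, whose internal entries $a_1,\dots,a_{k-1}$ are syllables $\xi\in F\setminus C$ or elements $\bar\phi(\eta)$ with $\eta\in\brakett{x,y}\setminus C$. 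By the maximality of $C$ from stage~(i) and the commutation-reflecting property of $\bar\phi$ from stage~(ii), none of $a_1,\dots,a_{k-1}$ commutes with $u$, so the standard big powers argument (Baumslag's lemma) in the free group $F$ produces a single $N$ with $\psi_n(p)\ne 1$ for all $p\in P$ and all $n\ge N$. Hence $G$ is fully residually $F$.

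I expect stage~(iii) to be the crux: turning the heuristic ``a long block $u^{\pm n}$ cannot be absorbed by its non-commuting neighbours'' into a genuine invocation of the big powers lemma, uniformly over the syllable patterns occurring in $P$ (and in the $pq^\mo$). An alternative that sidesteps this bookkeeping is to note that $(G=F*_C\brakett{x,y},\,\bar\phi)$ exhibits $G$ as a \emph{generalized double} over $F$ -- both factors are free (hence fully residually $F$), $C$ is maximal cyclic in $\brakett{x,y}$, and $\bar\phi$ retracts $G$ onto $F$ while embedding $\brakett{x,y}$ -- so that $G$ is fully residually $F$ by the standard structure theory of limit groups; the hypotheses needed there are exactly those verified in stages~(i)--(ii).
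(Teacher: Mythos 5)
Your proof is correct, but your main line of argument is genuinely different from the paper's. The paper disposes of the lemma in a few lines by \emph{embedding} the amalgam into the centralizer extension $F_1=\brakett{F,t\,|\,t^\mo u t = u}$: using the rank~2 solution it identifies $\brakett{x,y}$ with $t^\mo\brakett{\x,\y}t$, checks via Britton's Lemma that $\brakett{F,\,\brakett{\x,\y}^t}\approx F*_{u=w(x,y)}\brakett{x,y}$ inside $F_1$, and then quotes the fact that extensions of centralizers of $F$ are fully residually $F$; note that this route never needs $u$ or $w$ to be non-proper-powers. You instead build the discriminating family explicitly as $\bar\phi\circ\delta^n$ (retraction composed with powers of the Dehn twist along the amalgam) and verify it by hand with Baumslag's big powers lemma, which is where your maximality of $C$ in each factor (i.e.\ the non-proper-power hypotheses, via Corollary~\ref{cor:u-not-properpower}) is genuinely used. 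The two arguments are morally the same -- the standard proof that centralizer extensions (equivalently, generalized doubles, which is exactly your closing ``alternative'') are fully residually $F$ is itself a big powers argument -- so you have effectively inlined the proof of the black box the paper cites. What the paper's version buys is brevity and reuse of known structure theory; what yours buys is self-containedness and the bonus that the discriminating homomorphisms you exhibit are precisely the maps $\rho\circ\gamma_w^n$ that reappear later in the description of $V(S)$ in case~1 of Theorem~\ref{thm:main-result}. All the individual steps you give (nontriviality of $u$ via Lemma~\ref{lem:u-nontrivial}, Hopficity to get injectivity of $\bar\phi$ on $\brakett{x,y}$, reduction to cyclically reduced elements, and the non-commutation of the internal syllables with $u$) check out.
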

\begin{proof}
  Let $(\x,\y)$ be a rank 2 solution. Let $F_1 = \brakett{F,t|t^\mo u
    t = u}$, $F_1$ is a rank one free extension of a centralizer of
  $F$, and therefore is fully residually $F$. By definition
  $F-$subgroups are also fully residually $F$. Let
  $H=\brakett{\x,\y}\leq F$ and let $H'=t^\mo H t$. By Britton's Lemma we
  see that $H' \cap F = \brakett{u}$ and that \[\brakett{F,H}\approx
  F_{u=w({\x}^t,{\y}^t)}H'\approx F*_{u=w(x,y)}\brakett{x,y}\] so
  this gives an $F-$embedding $F*_{u=w(x,y)}\brakett{x,y}
  \hookrightarrow F_1$ so $F*_{u=w(x,y)}\brakett{x,y}$ is fully
  residually $F$.  By Corollary \ref{cor:obvious} we obtain the
  equality \[\FRS = F[x,y]/\ncl(S)\]
\end{proof}

\begin{lem}If $w(x,y)$ is not primitive nor a proper power then $\FRS
  = F*_{u=w(x,y)}\brakett{x,y}$ is freely indecomposable modulo $F$.
\end{lem}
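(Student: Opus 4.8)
The plan is to argue by contradiction. Suppose $\FRS = F *_{u = w(x,y)} \brakett{x,y}$ were freely decomposable modulo $F$, say $\FRS = A * B$ with $B \neq 1$; since the vertex groups of this free splitting are the conjugates of $A$ and of $B$, ``modulo $F$'' means $F$ lies in one such conjugate, and after replacing the splitting by a conjugate of itself we may assume $F \leq A$ (note then $A \neq \FRS$, because $B$ is nontrivial). The target is to deduce that $w(x,y)$ must be trivial, primitive, or a proper power in $\brakett{x,y} = F(x,y)$, each of which is excluded: triviality because the standing hypotheses (via Lemma \ref{lem:nss}) presuppose a rank $2$ solution, whence $u \neq 1$ by Lemma \ref{lem:u-nontrivial}; primitivity and proper powers by the assumption on $w$.

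The only fact about free products I would need is the Kurosh subgroup theorem, in the form: if $H \leq A * B$, then $H \cap A$ is a free factor of $H$. Applying this to $H = \brakett{x,y}$ shows that $\brakett{x,y} \cap A$ is a free factor of the rank-two free group $\brakett{x,y}$, so $\rk(\brakett{x,y} \cap A) \in \{0,1,2\}$. I would then locate $w(x,y)$ inside this free factor: the amalgamation identifies $u$ with $w(x,y)$, and $u \in F \leq A$, so $w(x,y) = u \in A$; since trivially $w(x,y) \in \brakett{x,y}$, we get $w(x,y) \in \brakett{x,y} \cap A$.

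It then remains to eliminate the three possibilities for $\rk(\brakett{x,y} \cap A)$. Rank $0$ gives $w(x,y) = 1$, hence $u = 1$, a contradiction. Rank $1$ means $\brakett{x,y} \cap A = \brakett{z}$ with $z$ a primitive element of $F(x,y)$ (a generator of a rank-one free factor), so $w(x,y) = z^k$ for some $k \neq 0$, making $w$ primitive if $|k| = 1$ and a proper power if $|k| \geq 2$ --- contradicting the hypothesis. Rank $2$ forces $\brakett{x,y} \cap A = \brakett{x,y}$, i.e. $\brakett{x,y} \subseteq A$, so $\FRS = \brakett{F, x, y} \subseteq A$ and $A = \FRS$, contradicting $B \neq 1$. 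Every case being impossible, no free decomposition modulo $F$ exists.

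The individual steps are all routine; the points that deserve a little care --- and what I would regard as the (mild) crux --- are the ``modulo $F$'' bookkeeping (justifying the reduction to $F \leq A$ through a conjugate splitting, and keeping straight that it is precisely $B \neq 1$ that turns ``$A = \FRS$'' into a contradiction) together with the elementary structural input that a proper nontrivial free factor of a rank-two free group is infinite cyclic, generated by a primitive element. An alternative packaging of the same argument would replace the appeal to Kurosh by looking at the action of $\brakett{x,y}$ and of $\brakett{u}$ on the Bass--Serre tree of $A*B$ (which has trivial edge stabilizers), but the Kurosh version seems the most transparent.
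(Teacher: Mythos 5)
Your proof is correct and follows essentially the same route as the paper's: both arguments come down to showing that a free decomposition of $\FRS$ modulo $F$ would force $w(x,y)$ into a proper --- hence cyclic, generated by a primitive --- free factor of $\brakett{x,y}$, contradicting the hypotheses. Your version is in fact slightly more complete, since you invoke Kurosh explicitly and dispose of the degenerate rank-$2$ case $\brakett{x,y}\subseteq A$, which the paper's terser proof passes over.
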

\begin{proof}
  Suppose not. Since $\brakett{x,y}$ is a free group of rank 2, if it
  splits freely with nontrivial factors, then it must split as a free
  product of two cyclic groups. Since any splitting of $\FRS$ modulo
  $F$ must also be modulo $w(x,y)$ we have that $w(x,y)$ must lie in
  one of these free cyclic factors, contradicting the hypotheses of
  the lemma.
\end{proof}

Given this first decomposition as an amalgam, we wish to see how it
can be refined to a cyclic JSJ decomposition modulo $F$. By the
Freiheitssatz, the subgroup $\bxyk \leq \FRS$ is free of rank 2. So to
investigate cyclic JSJ decomposition modulo $F$, we must first look at
the possible cyclic splitting of $\bxyk$. Our main tool will be the
following theorem of Swarup:

\begin{thm}[Theorem 1 of \cite{Swarup-1986}]\label{thm:Swarup} (A) Let
  $G=G_1*_HG_2$ be an amalgamated free product decomposition of a free
  group $G$ with $H$ finitely generated. Then, there is a non-trivial
  free factor $H'$ of $H$ such that $H'$ is a free factor of either
  $G_1$ or $G_2$.

(B) Let $G=J*_{H,t}$ be an HNN decomposition of a free group $G$ with
  $H$ finitely generated. Then there are decompositions $H=H_1*H_2,
  J=J_1*J_2$ with $H_1$ non trivial such that $H_1$ is a free factor
  of $J_1$ and $t^\mo H_1 t$ is conjugate in $J$ to a subgroup of
  $J_2$.
\end{thm}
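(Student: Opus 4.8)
\noindent\emph{Proof strategy.} The plan is to recast the statement in terms of actions on trees and to use the hypothesis that $G$ is free, so that $G$ acts freely and cocompactly on a tree. Write $G=\pi_1(\Gamma)$ for a finite graph $\Gamma$ and let $\tilde\Gamma$ be its universal cover, a tree on which $G$ acts freely with finite quotient. In case (A) the amalgam $G=G_1*_H G_2$ has a Bass--Serre tree $T$ with one orbit of edges (stabiliser a conjugate of $H$) and two orbits of vertices (stabilisers conjugates of $G_1$ and $G_2$). In case (B) the HNN decomposition $G=J*_{H,t}$ has a Bass--Serre tree $T$ with one orbit of edges and one orbit of vertices (stabiliser a conjugate of $J$), and the edges incident to a fixed vertex fall, modulo the vertex stabiliser, into two classes corresponding to the two associated subgroups $H$ and $t^\mo H t$.

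First I would choose a $G$-equivariant simplicial map $f\colon\tilde\Gamma\to T$ (after subdividing $\tilde\Gamma$ so $f$ sends edges to edges) and make it tight, i.e.\ with no backtracking. Let $P\subset\tilde\Gamma$ be the $f$-preimage of the set of midpoints of edges of $T$: a $G$-invariant set of midpoints of edges of $\tilde\Gamma$, hence with finite image $\bar P\subset\Gamma$. Cutting $\Gamma$ open along $\bar P$ produces finitely many subgraphs $\Gamma_1,\dots,\Gamma_k$, and elementary graph-of-groups bookkeeping gives a free-product decomposition $G\cong\pi_1(\Gamma_1)*\cdots*\pi_1(\Gamma_k)*F_r$ in which each $\pi_1(\Gamma_i)$ is conjugate into a vertex group of $T$: the component of $\tilde\Gamma\setminus P$ lying over $\Gamma_i$ is a subtree mapping into an open star of a vertex of $T$, so its $G$-stabiliser lies in that vertex's stabiliser, while the cut edges have trivial stabiliser because $G$ acts freely on $\tilde\Gamma$.

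I would then run the same construction relative to $H$: replace $\tilde\Gamma$ by the minimal $H$-invariant subtree $\tilde\Gamma_H$ (finite quotient, since $H$ is finitely generated), intersect with $P$, and cut. Each component of $\tilde\Gamma_H\setminus P$ maps into an open star of $T$, so its $H$-stabiliser lies in a conjugate of $G_1$ or $G_2$ (resp.\ of $J$), which exhibits $H$ as a free product of pieces each conjugate into a vertex group. In case (A) the candidate $H'$ is one of these nontrivial pieces; in case (B) the two classes of half-edges at the chosen vertex partition the pieces into a part $H_1$ meeting $J_1$ and a complementary part $H_2$ whose $t$-conjugate lies in $J_2$, which is the asserted shape $H=H_1*H_2$, $J=J_1*J_2$. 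To upgrade ``the piece lies inside a vertex group'' to ``the piece is a free factor of that vertex group'', I would take $\Gamma$, the map $f$, and the cut system to be of minimal complexity and argue that at a minimum each nontrivial $H$-piece must coincide with a free factor of the vertex group that contains it, since otherwise one could reduce the complexity.

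The hard part will be exactly this last point: the tree picture gives for free the inclusion of an $H$-piece into a vertex group, but promoting it to a free-factor statement --- and verifying that at least one $H$-piece is nontrivial --- genuinely needs the freeness of $G$, through a careful minimality or normalisation argument on $\Gamma$ and $f$; this is the technical heart of Swarup's proof. An alternative route is to obtain $T$ from a free $G$-tree by a finite sequence of elementary folds (in the sense of Stallings and Bestvina--Feighn) and to track how edge and vertex stabilisers grow at each elementary fold, but the bookkeeping there is comparable.
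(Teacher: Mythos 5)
The paper does not prove this statement: it is quoted verbatim as Theorem 1 of Swarup's paper \cite{Swarup-1986} and used as a black box, so there is no proof in the text to compare yours against. Judged on its own terms, your strategy --- resolving the Bass--Serre tree $T$ by a free cocompact $G$-tree $\tilde\Gamma$ via an equivariant tightened map $f$, cutting along the preimage $P$ of edge midpoints, and reading off induced free decompositions of $G$ and of $H$ --- is indeed the standard route, close in spirit both to Swarup's own pattern/track argument and to the modern proofs via Stallings--Bestvina--Feighn folds.

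As written, however, the proposal has a genuine gap, and it sits exactly where you place it. The part you do carry out --- each piece of the induced decomposition of $H$ is conjugate into a vertex group --- is vacuous in this setting: in case (A) the edge group $H$ is by definition a subgroup of both $G_1$ and $G_2$, so ``some piece of $H$ lies in a conjugate of $G_1$'' carries no information. The entire content of the theorem is that some piece is a \emph{nontrivial free factor} of $H$ and simultaneously a \emph{free factor} of $G_1$ or $G_2$ (and, in case (B), that the decompositions $H=H_1*H_2$ and $J=J_1*J_2$ are compatible with conjugation by $t$); but the stabiliser of a component of $\tilde\Gamma_H\setminus P$ inside the stabiliser of a component of $f^{-1}(\mathrm{star}(v))$ is a priori just a finitely generated subgroup, not a free factor, and nothing you have said rules out every piece being trivial. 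The ``minimal complexity'' step that is supposed to supply all of this is not specified: you would need to say which complexity is minimised (e.g.\ the number of points of $\bar P$ in $\Gamma$, lexicographically with the size of $\Gamma$), which moves reduce it, and why at a minimum the $H$-pieces and the vertex groups are carried by nested subgraphs of the cut-open graph, so that containment upgrades to the free-factor relation. That is the technical heart of Swarup's proof, so what you have is a correct plan rather than a proof.
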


\begin{cor}\label{cor:rank-formula} If  $G=G_1*_{\brakett{\gamma}}G_2$
  is an amalgamated free product decomposition of a free group over a
  nontrivial cyclic subgroup, then $\rk(G)=\rk(G_1)+\rk(G_2)-1$.
\end{cor}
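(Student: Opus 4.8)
The plan is to deduce the rank formula directly from part (A) of Swarup's Theorem \ref{thm:Swarup} together with the well-known fact that for a free group, rank is additive under free products and a free factor has rank at most that of the ambient group (Grushko/Nielsen theory for free groups). First I would apply part (A) to the decomposition $G=G_1*_{\brakett{\gamma}}G_2$. Since the amalgamated subgroup is infinite cyclic, its only nontrivial free factor is itself, so Swarup gives that $\brakett{\gamma}$ is a free factor of $G_1$ or of $G_2$; say $G_1=\brakett{\gamma}*G_1'$ after relabeling.

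Next I would compute the rank of $G$ from this extra structure. With $G_1=\brakett{\gamma}*G_1'$, the amalgam becomes $G=(\brakett{\gamma}*G_1')*_{\brakett{\gamma}}G_2$, and one sees that the $\brakett{\gamma}$ factor of $G_1$ gets absorbed into $G_2$: concretely $G\cong G_1'*G_2$, a genuine free product, because the amalgamating edge can be collapsed (the HNN/amalgam over $\brakett{\gamma}$ with $\brakett{\gamma}$ a free factor on one side is equivalent to a free product — this is a standard ``absorption'' move, or can be checked on presentations). Hence $\rk(G)=\rk(G_1')+\rk(G_2)$. Since $\rk(G_1)=\rk(G_1')+1$ (free product of $\brakett{\gamma}\cong\Z$ with $G_1'$), we get $\rk(G)=\rk(G_1)-1+\rk(G_2)$, which is the claim. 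The symmetric case where $\brakett{\gamma}$ is a free factor of $G_2$ is identical.

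The main obstacle, and the one point needing care, is justifying the absorption step $(\brakett{\gamma}*G_1')*_{\brakett{\gamma}}G_2\cong G_1'*G_2$ cleanly. I would handle it either by a direct presentation argument — writing down generators and relations for both sides and exhibiting mutually inverse homomorphisms — or, more conceptually, by a Bass--Serre/graph-of-groups move: build the tree of groups with the edge $e$ carrying $\brakett{\gamma}$, then use a fold that pulls the $\brakett{\gamma}$ free factor of $G_1$ across $e$, leaving a free product; this is essentially the ``collapse'' and ``fold'' moves of Definition \ref{defn:moves}. One should also note the degenerate possibility that $G_1'$ (or $G_2$) is trivial, i.e. $G_1=\brakett{\gamma}$; then $G=G_2$, $\rk(G_1)=1$, and $\rk(G)=\rk(G_2)=\rk(G_1)+\rk(G_2)-1$ still holds, so the formula is uniform. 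Finally, the implicit claim that all three groups are finitely generated free — needed for ``rank'' to make sense — is automatic: $G$ is free by hypothesis and finitely generated since we only apply this in the context of $\bxyk$ and its subgroups, and subgroups of a f.g. free group appearing as vertex groups here are f.g., while free factors of f.g. free groups are f.g.
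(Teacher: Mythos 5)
Your proof is correct and follows exactly the route the paper intends: the corollary is stated immediately after Swarup's theorem with no written proof, the point being precisely that part (A) forces $\brakett{\gamma}$ (its only nontrivial free factor being itself) to be a free factor of one side, after which the absorption $(\brakett{\gamma}*G_1')*_{\brakett{\gamma}}G_2\cong G_1'*G_2$ and Grushko additivity give the rank count. Your extra care with the Tietze-elimination justification of the absorption step and the degenerate case $G_1=\brakett{\gamma}$ is sound and fills in what the paper leaves implicit.
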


\begin{lem}\label{lem:allowable-splittings}Let $G$ be
  a free group of rank 2 and let $w\in G$ be non primitive, and not a
  proper power. Then the only possible almost reduced (see Definition
  \ref{defn:almost-reduced}) nontrivial cyclic splittings of $G$ as
  the fundamental group of a graph of groups with $w$ elliptic are as
  \begin{itemize} \item[(i)] a star of groups, specifically a graph of
    groups whose underlying graph is simply connected, consisting of a
    center vertex $v_c$ and a collection of peripheral vertices
    $v_1,\ldots,v_m$ connected to $v_c$ by an edge. The group
    associated to $v_c$, called the \emph{central group}, is free of
    rank 2 and each edge group is nontrivial, cyclic and is a proper
    finite index subgroup of the associated ``peripheral'' vertex
    group; or \item[(ii)]as an HNN extension
  \[G=\brakett{H,t|t^\mo p t = q}; p,q \in H-\{1\} \] where $w \in H$
  and $H$ is another free group of rank 2. Moreover we have that
  $H=\brakett{p}*\brakett{q}$ i.e.  $G=\brakett{p,t}$.\end{itemize}
\end{lem}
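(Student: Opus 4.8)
The plan is to analyse an arbitrary almost reduced (hence, by Definition \ref{defn:almost-reduced}, also unfolded) nontrivial cyclic splitting $D$ of $G$, with underlying graph $\Gamma$ and $w$ elliptic, in two stages: first pin down the ranks of the vertex groups and the topology of $\Gamma$, then use Swarup's Theorem \ref{thm:Swarup} to locate the cyclic edge groups inside the free vertex groups. Since every edge group is infinite cyclic and $G$ is free, every vertex group is free, and nontrivial because every vertex is incident to an edge with nontrivial edge group (a one-vertex graph carries a loop). Additivity of the Euler characteristic over a graph of groups of free groups, with $\chi(\Z)=0$, gives $1-\rk G=\sum_v(1-\rk G_v)$, so $\sum_v(\rk G_v-1)=1$; as each term is nonnegative there is a unique vertex $v_c$ with $\rk G_{v_c}=2$ and every other vertex group is infinite cyclic. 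Consequently no vertex group other than $G_{v_c}$ is a non-elementary surface group, so an MQH subgroup of $G$ is conjugate into $G_{v_c}$, and the MQH exception of Definition \ref{defn:almost-reduced} never applies to a valency $\le 2$ vertex with two distinct neighbours --- a point I use below. Collapsing all vertex groups yields $G\twoheadrightarrow\pi_1(\Gamma)=F_{b_1(\Gamma)}$, so $\Z^2=H_1(G)$ surjects onto $\Z^{b_1(\Gamma)}$ and $b_1(\Gamma)\le 2$; and $b_1(\Gamma)=2$ would make this surjection an isomorphism on $H_1$ between free groups of equal rank, hence an isomorphism, forcing all vertex groups to be trivial --- absurd. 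So $\Gamma$ is a tree or carries exactly one independent cycle.

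Suppose $\Gamma$ is a tree. I claim it is a star centred at $v_c$; since a tree all of whose vertices but one are leaves is a star, it suffices to show every $v\ne v_c$ is a leaf. If some $v\ne v_c$ has valency $\ge 2$, choose such a $v$ at maximal distance from $v_c$; then every component of $\Gamma\setminus\{v\}$ not containing $v_c$ is a single leaf of $\Gamma$ adjacent to $v$, so the subtree $T$ spanned by $v$ and those leaves carries a graph of groups of infinite cyclic groups over cyclic subgroups whose fundamental group embeds in $G\cong F_2$ and, by iterating Corollary \ref{cor:rank-formula}, has rank $1$. An elementary analysis of amalgams of cyclic groups shows $\pi_1(T)\cong\Z$ forces some edge of $T$ to carry a boundary monomorphism onto one of its cyclic vertex groups; almost-reducedness forbids this onto-map from hitting a leaf of $\Gamma$, so it is onto $G_v$, and some edge at $v$ has its edge-group image equal to all of $G_v$. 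If $v$ has valency $2$ this contradicts almost-reducedness at $v$; if it has valency $\ge 3$ one first slides the surplus edges at $v$ (whose images in $G_v$ all coincide) to lower its valency, then reaches the same contradiction. Hence $\Gamma$ is a star. Each peripheral vertex group contains the image of its edge group as a nontrivial, hence finite-index, subgroup, and almost-reducedness at the peripheral (valency $1$) vertices forces this index to be at least $2$; this is case (i). (Via Theorem \ref{thm:Swarup}(A) the edge-group images in $G_{v_c}$ are cyclic free factors, so every peripheral generator is primitive in $G$ and $w$ --- being neither primitive nor a proper power --- must be conjugate into $G_{v_c}$; this is not needed for the statement.)

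If instead $\Gamma$ carries one independent cycle, a parallel argument --- running the trees hanging off the cycle, and then the chain of cyclic vertices along the cycle, into infinite cyclic fundamental groups via Corollary \ref{cor:rank-formula}, and invoking almost-reducedness and unfoldedness --- forces $\Gamma$ to be a single vertex with a loop. So $D$ is an HNN extension $G=\brakett{H,t|\,t^{-1}pt=q}$ over an infinite cyclic edge group with $H=G_{v_c}$ free of rank $2$, $p,q\in H\setminus\{1\}$, and $w\in H$. Applying Theorem \ref{thm:Swarup}(B) to this HNN decomposition of the free group $G$, and using that the associated subgroup $\brakett{p}$ is cyclic hence freely indecomposable, gives a free decomposition $H=A*B$ with $\brakett{p}\le A$ a free factor and $\brakett{q}$ conjugate into $B$; the rank count of the first paragraph forces $\rk A=\rk B=1$, so $A=\brakett{p}$ and $B\cong\Z$. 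Applying Theorem \ref{thm:Swarup}(B) to the reversed presentation (stable letter $t^{-1}$, which exchanges the roles of $p$ and $q$) shows in addition that $\brakett{q}$ is a cyclic free factor of $H$. It remains to promote the statement that $\brakett{p}$ and $\brakett{q}$ are cyclic free factors of $H$ to the statement that $H=\brakett{p}*\brakett{q}$, i.e.\ that $\{p,q\}$ is a basis of $H$; this is where one must genuinely use that $w\in H$ is neither primitive nor a proper power, together with $D$ being almost reduced and unfolded, to rule out the residual configurations in which a cyclic free factor and a conjugate of a cyclic free factor fail to generate $H\cong F_2$. This yields case (ii) and completes the proof.

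I expect the main obstacle to be exactly this last step of the HNN case: Swarup's theorem controls where $\brakett{p}$ and $\brakett{q}$ lie up to conjugacy, but not the joint position of $p$ and $q$, so the conclusion $H=\brakett{p}*\brakett{q}$ genuinely needs the extra input coming from the hypotheses on $w$ and from the reducedness and unfoldedness of $D$. The tree case, by contrast, is largely bookkeeping with the rank formula once the Euler characteristic count is in place.
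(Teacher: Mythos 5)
Your first paragraph (the Euler-characteristic count giving a unique rank-2 vertex with all others cyclic, and the $H_1$/Hopf argument bounding $b_1(\Gamma)\le 1$) and your tree analysis are sound and run parallel to the paper, which rules out chains of cyclic vertices by observing that a proper amalgam of two cyclic groups over finite-index subgroups violates commutation transitivity in a free group --- the same content as your ``$\pi_1(T)\cong\Z$ forces an onto boundary monomorphism'' step. (Two small soft spots there: the reduction of a graph with one independent cycle but extra vertices to a single loop is only asserted, and your slide at a valency $\ge 3$ cyclic vertex requires the edge-group images to coincide exactly, which you have not checked; both are patchable by the same torus-knot-group observation you already use.)

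The genuine gap is the one you flag yourself and then leave open: you never prove $H=\brakett{p}*\brakett{q}$ in case (ii), and you guess wrongly that closing it requires the hypotheses on $w$ together with unfoldedness. It does not; what it requires is the full strength of Theorem \ref{thm:Swarup}(B), which --- contrary to your stated worry --- \emph{does} control the joint position of $p$ and $q$: it supplies a single decomposition $H=H_1*H_2$ of the vertex group with $\brakett{p}$ a free factor of $H_1$ and $\brakett{q}$ conjugate in $H$ into $H_2$. The paper's argument is: after conjugating a boundary monomorphism one may assume $q\in H_2$ and (by the symmetric application of Swarup to the inverse stable letter) that $\brakett{q}$ is a free factor, so $H=H'*\brakett{q}$ with $p\in H'$ and $\brakett{p}$ a free factor of $H'$. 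Now a Tietze transformation eliminating the generator $q$ via the relation $q=t^{-1}pt$ gives $G=H'*\brakett{t}$; since $\rk G=2$ this forces $H'$ to be cyclic, hence $H'=\brakett{p}$ (a cyclic group equals any nontrivial free factor of itself), so $H=\brakett{p}*\brakett{q}$ and $\rk H=2$ in one stroke. You should replace your ``it remains to promote\dots'' paragraph with this Tietze argument; without it the central assertion of case (ii), which is used downstream in Corollary \ref{cor:allowable-jsj} and Proposition \ref{prop:rank1-solutions}, is unproved.
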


\begin{proof}
  Let $D$ be a splitting of $G$. If $G$ splits as a free product with
  amalgamation $G=G_1*_\brakett{\gamma}G_2$ then if $\gamma$ is not
  trivial, Corollary \ref{cor:rank-formula} forces one of the factors
  to be cyclic. Since we are assuming almost reducedness we must have
  that the edge group is a finite index subgroup of one of the cyclic
  factors. Suppose $G_2$ is a cyclic factor and let $z$ be a generator
  of $G_2$. Then the free group $G$ is obtained by adjoining the
  $n^{th}$ root $z$ of the element $\gamma \in G_1$, which is a free
  group of rank 2. It is however impossible to have a further
  splitting $G_1*{\brakett{\gamma}}*G_2*_{\brakett{\gamma'}}*G_3$ with
  $G_2$ and $G_3$ cyclic and with $\brakett{\gamma},\brakett{\gamma'}$
  proper finite index subgroups of $G_2,G_3$ (resp.) since then, by an
  easy computation using normal forms, it would be possible to get a
  counter example to commutation transitivity, which must hold in a
  free group. The general star case follows.

  If the underlying graph of $D$ is simply connected and one of the
  edge groups is trivial, then we can collapse $D$ to a free product
  $G_1*G_2$ with nontrivial factors, and with $w$ lying in one of the
  vertex groups, by Grushko's Theorem we must have
  $\rk(G_1)=\rk(G_2)=1$ and our assumption that $w$ is elliptic in $D$
  and not a proper power forces $w$ to be primitive --contradiction.
  We have therefore covered the case where the underlying graph is
  simply connected.

  If the underlying graphs has two cycles (and a nontrivial vertex
  group), then we would have a proper epimorphism $G\rightarrow
  F(a,b)$ which contradicts the Hopf property.

  \noi \emph{Claim: If $G=\brakett{H,t|t^\mo p t = q}$, then $H$ is a
    free group of rank 2.} By Theorem \ref{thm:Swarup} $(B)$ and
  conjugating boundary monomorphisms we can arrange so that
  \bel{eqn:split1}H=H_1*H_2 \tr{~with~} p\in H_1 \tr{~and~} q \in
  H_2\ee Theorem \ref{thm:Swarup} $(B)$ moreover gives us that without
  loss of generality we can assume that $\brakett{q}$ is a free factor
  of $H_2$. This means that \bel{eqn:split2}H_2=H_2'*\brakett{q}\ee
  Letting $H'=H_1*H_2'$ we get that $H=H'*\brakett{q}$ so combining
  (\ref{eqn:split1}) and (\ref{eqn:split2}) gives us a presentation
  $G=\brakett{H',t,q|t^\mo p t = q}$ which via a Tietze transformation
  gives us \bel{eqn:split3}G=\brakett{H',t | \emptyset}\ee which
  forces $H'$ to be cyclic which means that $H$ has rank 2. Moreover,
  we see immediately that $H=\brakett{p}*\brakett{q}$.
\end{proof}

We denote by $\Delta$ the group of canonical $F-$automorphisms of
$\FRS$ (see Definition \ref{defn:canonical-automorphisms}.) 

\begin{convention}
  Whenever we have a ``star'' splitting of the subgroup
  $\brakett{x,y}$, as given in item (i) in the statement of Lemma
  \ref{lem:allowable-splittings}, we will collapse the whole splitting
  to a single vertex group. The first reason being that the Dehn
  twists around the edge groups fixing the central group act
  trivially. Secondly, by uniqueness of $n^{th}$ roots in a free
  group, we see that any mapping of the central group into a free
  group has at most a unique extension to the whole group. It follows
  that to describe solutions to the equation, the collapsed splitting
  is sufficient.
\end{convention}

\begin{cor}\label{cor:allowable-jsj}
There are three possible classes of cyclic JSJ decomposition modulo
$F$ of $\FRS$:\begin{enumerate}
\item $\FRS\approx F*_{u=w(x,y)}\brakett{x,y}$ and $\Delta = \bgwk$,
  where $\gamma_w$ is the automorphism that
  extend the mapping:\[\gamma_w: \left\{\begin{array}{ll} f \mapsto
      f;& f \in F\\ z \mapsto w^\mo z w;& z \in \bxyk
    \end{array}\right.\]
\item The subgroup $\brakett{x,y}$ splits as a cyclic HNN-extension:
  \[\brakett{x,y}=\brakett{H,t|t^\mo p t = q}\] with $w(x,y)\in H$ so
  that $\FRS\approx F*_{u=w(x,y)}\brakett{H,t|t^\mo p t = q}$ and
  $\Delta = \brakett{\gamma_w,\tau}$ where these are the automorphisms
  that extend the mappings:\[\gamma_w: \left\{\begin{array}{ll} f
      \mapsto f;& f \in F\\ z \mapsto w^\mo z w;& z \in \bxyk
  \end{array}\right. ; \tau:\left\{\begin{array}{ll} z \mapsto z;& z\in
  \brakett{F,H}\\ t \mapsto tq&\end{array}\right.\]
\item $\FRS \approx F*_{u=w(x,y)}Q$ where $Q$ is a QH subgroup and, up
  to rational equivalence, $Q=\brakett{x,y,w|[x,y]w^\mo}$. $\Delta$
  is generated by the automorphisms extending the
  mappings:\[\gamma_w; ~ \delta_x:\left\{\begin{array}{l}x\mapsto yx\\
      \tr{identity
  on~} F\cup\{y\}\end{array}\right. ;
  ~\delta_y:\left\{\begin{array}{l}y\mapsto xy \\ \tr{identity on~}
  F\cup\{x\}\end{array}\right.\]
\end{enumerate}
\end{cor}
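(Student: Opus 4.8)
The plan is to combine Lemma~\ref{lem:nss}, which identifies $\FRS$ with the amalgam $F*_{u=w(x,y)}\brakett{x,y}$, with Lemma~\ref{lem:allowable-splittings}, which enumerates the almost reduced cyclic splittings of the rank-$2$ free group $\brakett{x,y}$ with $w$ elliptic, and then promote these to cyclic JSJ decompositions of $\FRS$ modulo $F$. First I would observe that, since $\FRS$ is freely indecomposable modulo $F$ (previous lemma) and is not equal to $F$, it admits a nontrivial cyclic JSJ decomposition $D$ modulo $F$. Every splitting of $\FRS$ modulo $F$ is in particular modulo $\brakett{u}=\brakett{w(x,y)}$, so $w(x,y)$ is elliptic; restricting $D$ to the Freiheitssatz subgroup $\brakett{x,y}$ (which is free of rank $2$) yields, after possibly collapsing and applying the moves of Definition~\ref{defn:moves} to reach an almost reduced form, one of the two shapes of Lemma~\ref{lem:allowable-splittings}. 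The Convention then lets me collapse the star case of Lemma~\ref{lem:allowable-splittings}(i) to a single vertex, so that either $\brakett{x,y}$ sits rigidly inside one vertex group of $D$, or $D$ induces on $\brakett{x,y}$ the HNN splitting $\brakett{H,t\mid t^{-1}pt=q}$ of part (ii).

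Next I would analyse the vertex carrying $F$. Because $D$ is modulo $F$, there is a vertex group containing $F$, and $\brakett{x,y}$ is amalgamated to it over $\brakett{u}$; a JSJ vertex adjacent to $F$ that is not MQH is either maximal abelian or rigid (Theorem~\ref{thm:jsj}(1)). This gives the three cases of the corollary. In Case~1, $D$ is just the edge with vertex groups $F$ and $\brakett{x,y}$ over $\brakett{u}=\brakett{w}$; the only edge is the amalgamating one, the single Dehn twist is conjugation of the $\brakett{x,y}$-side by $w$, which is precisely $\gamma_w$, and there are no abelian vertices, so $\Delta=\bgwk$. In Case~2, the induced splitting of $\brakett{x,y}$ is the HNN extension of Lemma~\ref{lem:allowable-splittings}(ii); refining $D$ along this splitting gives an extra edge with cyclic edge group $\brakett{q}$ inside the vertex $\brakett{x,y}$, whose Dehn twist is the map $\tau\colon t\mapsto tq$ fixing $\brakett{F,H}$, by Definition~\ref{defn:dehntwist}; together with $\gamma_w$ this generates $\Delta$, there being again no abelian vertices (the only candidate vertex group $H=\brakett{p}*\brakett{q}$ is non-abelian since $w\in H$ is non-primitive, hence not a proper power of a generator). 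In Case~3, the relevant splitting of $\brakett{x,y}$ is hyperbolic in another, so by Theorem~\ref{thm:hyp-hyp}(ii) a QH vertex $Q$ appears; since $\brakett{x,y}$ has rank $2$ and $Q$ must carry $w$ as a boundary curve, an Euler-characteristic count (using that $Q$ admits a pseudo-Anosov, ruling out the sphere-with-punctures, disc, annulus, projective plane and Klein bottle cases) forces $Q$ to be the once-punctured torus group $\brakett{x,y,w\mid [x,y]w^{-1}}$ up to rational equivalence — here Lemma~\ref{cor:QH-fun} guarantees that the relabelling $[\xi,\zeta]=p\mapsto[x,y]=w$ is realised by an $F$-automorphism of $F[x,y]$. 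The canonical automorphisms of a once-punctured torus are generated by the two Dehn twists $\delta_x,\delta_y$ along the standard curves, which extend the stated maps $x\mapsto yx$ and $y\mapsto xy$; adding $\gamma_w$ (the twist along the amalgamating edge) and noting there is no abelian vertex gives the claimed $\Delta$.

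Throughout I would lean on Theorem~\ref{thm:jsj}(4): any two JSJ-type splittings differ by slides, conjugations, and boundary-monomorphism adjustments, so the enumeration above is exhaustive up to the equivalences we have already used, and I would invoke Theorem~\ref{thm:hyp-hyp}(i) to justify the dichotomy ``the induced splitting of $\brakett{x,y}$ is elliptic in every other cyclic splitting of $\FRS$'' (Cases~1--2) versus ``it is hyperbolic in some splitting'' (Case~3).

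The main obstacle I expect is Case~3: pinning down exactly which QH surface can occur. Lemma~\ref{lem:allowable-splittings} is stated for splittings of $\brakett{x,y}$, but the QH subgroup $Q$ arising in the JSJ of $\FRS$ need not equal $\brakett{x,y}$ — a priori $Q\cap\brakett{x,y}$ could be a proper subgroup, or $w$ could map to an interior curve rather than a boundary curve of $Q$. Ruling this out requires using that $\brakett{x,y}$ is a one-edge amalgam $\brakett{x,y}=(\brakett{x,y}\cap Q)*_{\brakett{w}}(\text{rest})$ inside the refined splitting together with Corollary~\ref{cor:rank-formula} and the rank-$2$ constraint to force $Q\cap\brakett{x,y}=\brakett{x,y}$ and $w$ to be a boundary curve; combined with the exclusion of the small surfaces (non-pseudo-Anosov cases) this leaves only the once-punctured torus. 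The second delicate point is verifying that $\delta_x,\delta_y$ as written genuinely extend to $F$-automorphisms of $\FRS$ and agree with the JSJ Dehn twists along curves in $Q$ — this is exactly the content of Lemma~\ref{cor:QH-fun}, applied with the roles of the basis and the commutator as indicated, so the check is short once the surface is identified.
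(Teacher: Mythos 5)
Your overall route is the same as the paper's: reduce to Lemma~\ref{lem:allowable-splittings} for the splitting induced on $\brakett{x,y}$, collapse the star case by the Convention, read off $\Delta$ from Definition~\ref{defn:canonical-automorphisms}, and in the QH case identify the surface as the once-punctured torus by excluding the once-punctured Klein bottle and then invoke Lemma~\ref{cor:QH-fun}. Cases 1 and 2 of your argument are fine and match the paper.

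The gap is in Case 3, at exactly the point you flag as delicate, and your proposed repair does not close it. The configuration that must be excluded is not ``$Q\cap\brakett{x,y}$ a proper subgroup of a one-edge amalgam over $\brakett{w}$'': by Lemma~\ref{lem:allowable-splittings}, once the star case is collapsed the only nontrivial splitting of $\brakett{x,y}$ with $w$ elliptic is the HNN extension $\brakett{H,t\mid t^{-1}pt=q}$, so the case to rule out is that the base $H$ --- a \emph{proper}, rank-$2$ subgroup of $\brakett{x,y}$ --- is itself the QH vertex $Q$. Corollary~\ref{cor:rank-formula} concerns amalgamated products and gives no information about this HNN configuration, so the rank count you sketch never engages with the case that actually needs excluding. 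The paper's argument is different and short: if $Q=H$ were QH, it would be a once-punctured torus group, which has a single puncture, so all of its boundary subgroups are conjugate in $Q$; in particular the two associated subgroups $\brakett{p}$ and $\brakett{q}$ would satisfy $g^{-1}pg=q$ for some $g\in Q$, and combined with $t^{-1}pt=q$ this forces $tg^{-1}$ to commute with $p$, producing a noncyclic abelian subgroup $\brakett{p,tg^{-1}}$ of the free group $\brakett{x,y}$ --- a contradiction. Adding this step (and dropping the amalgam/rank-formula detour) makes your Case 3 complete and brings it in line with the paper's proof.
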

\begin{proof} Suppose first that the cyclic JSJ decomposition of
  $\FRS$ modulo $F$ has a QH subgroup $Q$. Then $Q$ must be a subgroup
  of $\bxyk$, in particular there must be a splitting of $\bxyk$
  modulo $w$ such that $Q$ is one of its vertex groups. By Lemma
  \ref{lem:allowable-splittings} we must either have that $Q=\bxyk$,
  or $\bxyk$ is an HNN extension of $Q$. Either way we must have that
  $Q$ is a free group of rank 2. The possible punctured surfaces $S$
  such that $\pi_1(S)$ is a free group of rank 2 are the once
  punctured torus or the once punctured Klein bottle, the latter is
  not allowed (see Theorem 3 of \cite{KM-IrredI}.) Moreover, we see
  that if $\bxyk$ is an HNN extension of $Q$ then the associated
  subgroups must be conjugate in $Q$, which would imply that $\bxyk$
  contains an abelian free group of rank 2 --contradiction. It follows
  from Corollary \ref{cor:QH-fun} that, up to rational equivalence,
  the only possibility is as in case 3. of the statement.

  The rest of the statement follows immediately from Lemma
  \ref{lem:allowable-splittings} and Definition
  \ref{defn:canonical-automorphisms}.
\end{proof}

\subsection{Solutions of rank 1}\label{sec:vs1}
We now consider solutions of rank 1. Although everything can easily be
described in terms of linear algebra, it is instructive to explain
this in terms of Hom diagrams and canonical automorphisms, because as
we shall see these provide examples of canonical epimorphisms that are
not \emph{strict} (see \cite{Sela-DiophI} for a definition.)

As we saw earlier, rank 1 solutions occur when we are solving
$w(x,y)=1$. More generally a rank 1 solutions occurs if and only if
$w(x,y) = u = v^d$ where $d=gcd(\sigma_x(w),\sigma_y(w))$;
$\sigma_x(w), \sigma_y(w)$ denote the exponent sums of $x, y$ in
$w(x,y)$. Corollary \ref{cor:u-not-properpower} states that if $d>1$,
but $w(x,y)$ not primitive and not a proper power, then all solution
of $w(x,y)=u$ have rank 1. If $d=1$ then $w(x,y)=u$ may have both rank
1 and rank 2 solutions.

Let $S_1=\{w(x,y)u^\mo,[x,y]\}$, then all rank 1 solutions must factor
through $F_{R(S_1)}$. If $d > 1$ then, since all solutions are rank 1,
we must have we in fact have $Rad(\{w(x,y)u^\mo\}) =
ncl(\{w(x,y)u^\mo, [x,y]\})$. As a set, these solutions are easy to
describe:

\bel{eqn:vs1}V(S_1)=\{(u^{n_1},u^{n_2})\in F\times F |n_1 \sigma_x(w)
+ n_2\sigma_y(w) = d\}\ee Let $p,q$ be integers such that
\bel{eqn:gcd}p\sigma_x(w)+q\sigma_y(w)=d\ee then doing some linear
algebra we have that $n_1,n_2$ in (\ref{eqn:vs1}) are given by
\bel{eqn:linalg} (n_1,n_2) = (p,q) + m(\sigma_y(w),-\sigma_x(w)); ~m
\in \Z \ee 

We now investigate the situation where $w(x,y)=u$ has rank 1 and rank
2 solutions, i.e $V(S) \supsetneq V(S_1)$. We first want to understand
$F_{R(S_1)}$.

\begin{lem} Suppose that $w(x,y)$ is not primitive nor a proper power
  and suppose moreover that $w(x,y)=u$ admits rank 1 and rank 2
  solutions. Then there $F_{R(S_1)}$ is isomorphic to
  $\brakett{F,s|[u,s]=1}=F_1$. The $F-$morphism
  $\pi_1:F_{R(S_1)}\rightarrow F_1$ given by
  \bel{eqn:non-strict-map}\pi_1(x) = u^ps^{\sigma_y(w)} = \ol{x};~
  \pi_1(y) = u^qs^{-\sigma_x(w)} = \ol{y} \ee where $p,q$ are as in
  equation (\ref{eqn:gcd}), realizes this isomorphism.
\end{lem}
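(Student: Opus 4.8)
The plan is to identify $F_{R(S_1)}$ by first pinning down $F[x,y]/\ncl(S_1)$ concretely, then using the hypothesis that rank $1$ and rank $2$ solutions both occur to force $d=1$, and finally showing that the map $\pi_1$ given by (\ref{eqn:non-strict-map}) is an isomorphism onto $F_1=\brakett{F,s\mid[u,s]=1}$. First I would compute $F[x,y]/\ncl(S_1)$: since $S_1=\{w(x,y)u^\mo,[x,y]\}$, the quotient is $F*_{\bra u\kett=\bra w\kett}\Z^2$-like, but more precisely, killing $[x,y]$ makes $\bra x,y\kett$ abelian of rank $\le 2$, and imposing $w(x,y)=u$ identifies a distinguished element. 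Because $w(x,y)=u$ admits a rank $2$ solution, by the discussion preceding the lemma we must have $d=\gcd(\sigma_x(w),\sigma_y(w))=1$ (if $d>1$, Corollary \ref{cor:u-not-properpower} together with the remark that $u$ would then be forced to be a proper power rules out rank $2$ solutions, contradiction). Hence in the abelianization $\Z^2=\bra \x,\y\kett$ the image of $w$ is a \emph{primitive} vector $(\sigma_x(w),\sigma_y(w))$, so choosing $p,q$ with $p\sigma_x(w)+q\sigma_y(w)=1$ as in (\ref{eqn:gcd}) gives a basis $\{w\text{-image},\, (\sigma_y(w),-\sigma_x(w))\}$ of $\Z^2$; call the second basis element $s$. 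Writing $\x=u^p s^{\sigma_y(w)}$, $\y=u^q s^{-\sigma_x(w)}$ inside the group $\bra F\kett *_{\bra u\kett} (\bra u\kett\times\bra s\kett)$ — i.e. inside $F_1$ — and checking the relation $w(\x,\y)=u$ holds there (it does, by the exponent-sum computation, since $\bra \x,\y\kett$ is abelian and the $s$-exponents cancel while the $u$-exponents sum correctly), we obtain a well-defined $F$-homomorphism $F[x,y]/\ncl(S_1)\to F_1$.

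Next I would check this homomorphism is surjective: $s=\x^{-\sigma_x(w)}$ times an appropriate power relation — more carefully, from $\x=u^ps^{\sigma_y(w)}$ and $\y=u^qs^{-\sigma_x(w)}$ one recovers $s$ and $u$ as products of powers of $\x,\y$ precisely because the change-of-basis matrix $\begin{pmatrix}p&q\\\sigma_y(w)&-\sigma_x(w)\end{pmatrix}$ has determinant $-d=-1$, hence is invertible over $\Z$; and $F$ is hit by $F$. For injectivity at the level of $F_{R(S_1)}$, the cleanest route is: show $F_1$ is fully residually $F$ (it is a rank-one extension of a centralizer of $F$, hence fully residually $F$ by the standard result used already in the proof of Lemma \ref{lem:nss}), and show the map $F[x,y]/\ncl(S_1)\to F_1$ is in fact an \emph{iso} on the nose, using Corollary \ref{cor:obvious}. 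Concretely: give the inverse map $F_1\to F[x,y]/\ncl(S_1)$ by sending $s\mapsto \x^{a}\y^{b}$ and $u\mapsto \x^{\sigma_x(w)}\y^{\sigma_y(w)}$ (images read off from the inverse integer matrix), check it respects $[u,s]=1$ (automatic, since the target group $\bra x,y\kett/[x,y]$-part is abelian), and verify the two composites are the identity on generators. That forces $F[x,y]/\ncl(S_1)\cong F_1$, which is fully residually $F$, so by Corollary \ref{cor:obvious}, $F_{R(S_1)}=F[x,y]/\ncl(S_1)\cong F_1$, and by construction the isomorphism is realized by $\pi_1$.

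The main obstacle I anticipate is \emph{justifying that $d=1$ cleanly} and, relatedly, making sure that $\bra x,y\kett$ in $F[x,y]/\ncl(S_1)$ is genuinely free abelian of rank exactly $2$ (not rank $1$ or rank $0$) — one needs the existence of an actual rank $2$ solution of $w=u$ to produce a homomorphism to $F$ whose image is nonabelian rank $2$, hence the abelianization of $\bra x,y\kett$ modulo the one relation $w=u$ cannot collapse further; this is where the hypothesis ``admits rank $1$ \emph{and} rank $2$ solutions'' is really used, and it is also what makes the change-of-basis matrix unimodular so that $\pi_1$ is invertible. The remaining steps — the exponent-sum bookkeeping verifying $w(\x,\y)=u$ in $F_1$, and the matrix inversion giving the inverse map — are routine linear algebra over $\Z$ once $d=1$ is in hand. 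I would also remark, as the surrounding text flags, that $\pi_1$ is an instructive example of a \emph{non-strict} epimorphism, but that observation is not needed for the proof itself.
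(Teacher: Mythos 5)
Your proof is correct, but it takes a genuinely different route from the paper's. The paper keeps $F_{R(S_1)}$ as a black box: it observes that $\pi_1$ is surjective, so pullback gives an injection $\Hom(F_1,F)\hookrightarrow\Hom(F_{R(S_1)},F)$; it then shows this is a bijection by matching the explicit linear-algebra description (\ref{eqn:vs1})--(\ref{eqn:linalg}) of $V(S_1)$ against the set of maps $\pi_2\circ\sigma\circ\pi_1$ with $\sigma$ ranging over the canonical automorphisms $\Delta_1$ of the centralizer extension $F_1$; since every $F$-morphism to $F$ then factors through $\pi_1$, the kernel of $\pi_1$ lies in the (already trivial) radical, giving the isomorphism. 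You instead compute the presented group $F[x,y]/\ncl(S_1)$ head-on as the amalgam $F*_{\bra u\kett=\bra \bar w\kett}\Z^2$, use the hypothesis of coexisting rank $1$ and rank $2$ solutions (via Corollary \ref{cor:u-not-properpower} and Lemma \ref{lem:u-nontrivial}) to force $d=1$ and hence $u\neq 1$, $\bar w$ primitive in $\Z^2$, and then exhibit $\pi_1$ as a unimodular change of basis on the abelian factor, finishing with Corollary \ref{cor:obvious}. Your version is more self-contained and makes the role of $d=1$ (unimodularity of $\bigl(\begin{smallmatrix}p&q\\ \sigma_y(w)&-\sigma_x(w)\end{smallmatrix}\bigr)$) completely transparent, whereas the paper's Hom-set argument does double duty by simultaneously establishing the factorization of rank $1$ solutions through $F_1$ that is reused immediately in Proposition \ref{prop:rank1-solutions}. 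One small remark: your worry that $\bra x,y\kett$ might ``collapse further'' is unfounded once you know $u\neq1$ and $\bar w\neq 0$, since the presentation $\bra A*B\mid a=b\kett$ with $a,b$ of infinite order is automatically the amalgam and both factors embed; no appeal to a rank $2$ solution is needed for that particular point.
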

\begin{proof}
  Consider the $F-$epimorphism $\pi_1:F_{R(S_1)} \rightarrow
  \brakett{F,s|[u,s]=1}=F_1$ given by (\ref{eqn:non-strict-map}) On one
  hand we see that $\pi_1$ is surjective which gives an injection
  \bel{eqn:pullback}\Hom(F_1,F) \hookrightarrow \Hom(F_{R(S_1)},F)\ee
  via pullbacks $f\mapsto f \circ \pi_1$. On the other hand $F_1$, a
  free rank 1 extension of a centralizer, is fully residually free. On
  the third hand the group $\Delta_1$ of canonical $F$ automorphisms
  of $F_1$ is generated by the automorphism given by:
\[\delta:\left\{\begin{array}{ll}s \mapsto su& \\ f \mapsto f
    & f\in F \end{array}\right.\] and if we consider the
$F-$epimorphism $\pi_2:F_1\rightarrow F$ given by $\pi_2(s)=u$ then we
immediately see that the set \[V = \{(\pi_2(\sigma^m(\ol{x})),
\pi_2(\sigma^m(\ol{y})) \in F\times F|\sigma \in \Delta_1\}\] of
images of $(x,y)$ via the mappings $\pi_2 \circ \sigma \circ \pi_1,
\sigma \in \Delta_1$ coincides with $V(S_1)$. And since $\Hom(F_1,F) =
\{\pi_2\circ \sigma| \sigma \in \Delta_1\}$ we get that the
correspondence (\ref{eqn:pullback}) is in fact a bijective
correspondence. It follows that $F_{R(S_1)} \approx_F F_1$.
\end{proof}

\begin{prop}\label{prop:rank1-solutions} Let $w(x,y)$ be non primitive
  and not a proper power. Suppose moreover that $w(x,y)=u$ has rank 1
  and rank 2 solutions. Then
  \begin{itemize}
  \item[(i)] if $\FRS$ is as in $1.$ in Corollary
    \ref{cor:allowable-jsj} , then $V(S_1)$ is represented by the
    following branch in $\Diag(\FRS,F)$:\bel{eqn:case1} \xymatrix{\FRS
      \ar[r]^{\pi_1} & F_1\ar@(ur,ul)_{\sigma} \ar[r]^{\pi_2}&F}\ee
    where $\sigma \in \Delta_1$.
  \item[(ii)] If $\FRS$ is as in $2.$ in Corollary
    \ref{cor:allowable-jsj} , then $V(S_1)$ is represented by the
    following branch in $\Diag(\FRS,F)$:
    \bel{eqn:case2}\xymatrix{\FRS\ar@(ur,ul)_{\sigma} \ar[r]^{\pi_3} &
      F}\ee where $\sigma \in \Delta$ and $\pi_3=\pi_2\circ\pi_1$
\end{itemize}
Where $\pi_1, \pi_2$ and $\Delta_1$ were defined in the previous
proof.
\end{prop}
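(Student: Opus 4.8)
The plan is to derive the statement from the description of $F_{R(S_1)}$ and of $\Hom(F_1,F)$ obtained in the preceding lemma. First I would record the standing reductions: since we assume $w(x,y)=u$ has \emph{both} rank $1$ and rank $2$ solutions, Lemma~\ref{lem:u-nontrivial} and Corollary~\ref{cor:u-not-properpower} force $u\neq 1$ and $d=\gcd(\sigma_x(w),\sigma_y(w))=1$, so the formulas \eqref{eqn:vs1}--\eqref{eqn:linalg} apply and $V(S_1)$ is exactly the set of pairs $(u^{n_1},u^{n_2})$ with $(n_1,n_2)$ on the line $n_1\sigma_x(w)+n_2\sigma_y(w)=1$, a single coset of $\Z\cdot(\sigma_y(w),-\sigma_x(w))$. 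Next, a solution of $S$ has rank $\le 1$ iff it kills $[x,y]$, i.e.\ iff the image of $(x,y)$ lies in $V(S_1)$; such an $f$ factors through $F[x,y]/\ncl(S_1)$, hence through $F_{R(S_1)}$. Since $\ncl(S)\subseteq\ncl(S_1)$ there is a canonical $F$-epimorphism $\FRS\twoheadrightarrow F_{R(S_1)}$, proper because a rank $2$ solution lies in $V(S)\setminus V(S_1)$; composing with the isomorphism $\pi_1\colon F_{R(S_1)}\to F_1$ of the preceding lemma I obtain a proper $F$-epimorphism $\FRS\to F_1$ which, abusing notation, I also denote $\pi_1$.

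In case~(i) I would then simply chase a rank $1$ solution $f$ through this picture. Factoring $f$ through $F_{R(S_1)}$ and using that $\pi_1$ is an isomorphism yields a unique $h\in\Hom(F_1,F)$ with $f=h\circ\pi_1$; as established in the preceding proof $h=\pi_2\circ\sigma$ for some $\sigma\in\Delta_1$, so $f=\pi_2\circ\sigma\circ\pi_1$, which is precisely the branch \eqref{eqn:case1}. Conversely every such composite sends $x,y$ into $\pi_2(\brakett{u,s})=\brakett{u}$ and factors through $\FRS$, hence is a rank $1$ solution, so the branch represents exactly $V(S_1)$. I would also note that the canonical automorphism $\gamma_w$ of $\FRS$ becomes redundant after $\pi_1$ (it is conjugation by $w=u$, and $\pi_1$ sends $\brakett{x,y}$ into the abelian subgroup $\brakett{u,s}$), which explains why no automorphism of $\FRS$ appears on this branch.

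In case~(ii) I would set $\pi_3=\pi_2\circ\pi_1\colon\FRS\to F$ and argue as follows. From \eqref{eqn:non-strict-map} one sees $\pi_3$ carries $\brakett{x,y}$ into $\brakett{u}$, and $\pi_3(w(x,y))=u$, so $\pi_3\circ\gamma_w=\pi_3$; moreover $\gamma_w$ and $\tau$ commute (a one-line check on the generators $F\cup\{p,t\}$), so every $\sigma\in\Delta=\brakett{\gamma_w,\tau}$ satisfies $\pi_3\circ\sigma=\pi_3\circ\tau^m$ for some $m\in\Z$. It then remains to show $\{(\pi_3\tau^m(x),\pi_3\tau^m(y)):m\in\Z\}=V(S_1)$. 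Using $\brakett{p,t}=\brakett{x,y}$ I write $\pi_3(p)=u^{a}$, $\pi_3(t)=u^{b}$, and since $\tau^m\colon t\mapsto tq^m$ the images of $x,y$ are $u^{a\mu+(b+am)\nu}$ and $u^{a\mu'+(b+am)\nu'}$, where $(\mu,\nu)$, $(\mu',\nu')$ are the exponent-sum vectors of $x,y$ in the basis $\{p,t\}$. Because $w(x,y)\in H=\brakett{p}*\brakett{q}$ with $q=t^{-1}pt$, every element of $H$ has zero $t$-exponent sum, which translates to $\sigma_x(w)\nu+\sigma_y(w)\nu'=0$; since $\gcd(\sigma_x(w),\sigma_y(w))=1$ this forces $(\nu,\nu')=\pm(\sigma_y(w),-\sigma_x(w))$, and then the change-of-basis determinant $\mu\nu'-\mu'\nu=\pm1$ together with $\pi_3(w)=u$ pins down $a=\pm1$. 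Hence as $m$ ranges over $\Z$ the pair $(\pi_3\tau^m(x),\pi_3\tau^m(y))$ runs over a full coset of $\Z\cdot(\sigma_y(w),-\sigma_x(w))$ through the base point $(\pi_3(x),\pi_3(y))$, which by \eqref{eqn:non-strict-map} and \eqref{eqn:gcd} lies on the line $n_1\sigma_x(w)+n_2\sigma_y(w)=1$; by \eqref{eqn:vs1}--\eqref{eqn:linalg} this coset is $V(S_1)$, giving the branch \eqref{eqn:case2}.

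The step I expect to be the real obstacle is the bookkeeping in case~(ii): one must identify the exponent-sum vector of the stable letter $t$ with $\pm(\sigma_y(w),-\sigma_x(w))$, check the normalization $a=\pm1$, and verify that the base point $\pi_3(x,y)$ falls on the correct affine line (and not on a parallel one), so that the $\tau$-orbit sweeps out \emph{all} of $V(S_1)$ and nothing more. Everything in case~(i), and the properness and factorization claims, are routine once the preceding lemma is in hand.
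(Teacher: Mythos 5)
Your proposal is correct and follows essentially the same route as the paper: both cases reduce to the chain $\FRS\to F_1\to F$, with case (i) handled by $\pi_1\circ\gamma_w=\pi_1$ and case (ii) by the observation that $w$ has zero $t$-exponent sum in the basis $\{p,t\}$ so that the Dehn twist $\tau$ realizes the generator $\delta$ of $\Delta_1$ (the paper states this as the intertwining relation $\pi_1\circ\tau=\delta\circ\pi_1$, while you verify the equivalent fact by computing the $\tau$-orbit of $(\pi_3(x),\pi_3(y))$ directly). The unimodularity argument you use to pin down the coefficient $\pm1$ matches the paper's appeal to $u$ lying in a minimal generating set of the abelianization.
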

\begin{proof}
  We first note that if $\FRS$ corresponds to case $3.$ of Corollary
  \ref{cor:allowable-jsj}, then the equality (\ref{eqn:gcd}) is
  impossible. In both possible cases we have epimorphisms
  \bel{eqn:chain}\xymatrix{\FRS \ar[r]^{\pi_1} & F_1
    \ar[r]^{\pi_2}&F}\ee We saw that all solutions rank 1 solutions
  factor through $\pi_1$. If $\FRS$ is as in $1.$ in Corollary
  \ref{cor:allowable-jsj} then $\Delta$ is generated by $\gamma_w$,
  now since $\pi_1\circ\gamma_w = \pi_1$ we have that solutions in
  $V(S_1)$ must factor through $F_1$ and are parametrized by
  $\Delta_1$.

  If $\FRS$ is as in $2.$ in Corollary \ref{cor:allowable-jsj}, then
  $\bxyk$ splits as \[\brakett{H,t|t^\mo p t = q}; p,q \in H\], moreover
  by Lemma \ref{lem:allowable-splittings} we have that
  $\bxyk=\brakett{p,t}$. We consider this basis of $\bxyk$. Let
  $\pi_1(t)=\ol{t},\pi_1(p)=\ol{p}$, then the subgroup
  $\Z\oplus\Z\approx A = \brakett{u,s} \leq F_1$ is generated by
  $\ol{p},\ol{t}$. We note that in $\FRS$, as written as a word in
  $\{p,t\}^\pmo, w(x,y)=w'(p,t)=u$ has exponent sum zero in the letter
  $t$.  Since $A$ is the abelianization of $\bxyk$, we have that in
  $A, u=0\ol{t}+n\ol{p}$ and since $u$ lies in a minimal generating
  set of $A$ we must have $n=\pm 1$. It therefore follows that for the
  Dehn twist $\tau$, which sends $t\mapsto tq$, we have
  $\pi_1\circ\tau=\delta \circ \pi_1$, where $\delta$ is the generator
  of $\Delta_1$. It follows that the canonical $F$-automorphisms of
  $F_1$ in (\ref{eqn:chain}) can be ``lifted'' to $\FRS$ and the
  branch (\ref{eqn:case2}) gives us a parametrization of $V(S_1)$.
\end{proof}

\subsection{Solutions of rank 2} 

Before being able to make our finiteness arguments we need some
preliminary setup. We will study more closely mappings
$F(x,y)\rightarrow F$.
\begin{defn}\begin{itemize}\item[(i)]  Let $(f_1,f_2)$ be a pair of words
    in a free group, then an \emph{elementary Nielsen move} (e.N.m.) is
    a mapping of the form\[ (f_1,f_2) \mapsto
    (f_1,(f_2^{\epsilon_1}f_1^{\epsilon_2})^{\epsilon_3}) \tr{~or~}
    (f_1,f_2)\mapsto
    ((f_1^{\epsilon_1}f_2^{\epsilon_2})^{\epsilon_3}),f_2)\] with
    $\epsilon_1,\epsilon_3 \in \{-1,1\}$ and $\epsilon_2 \in
    \{-1,0,1\}$.
  \item[(ii)] For $F(x,y)$, the free group on the basis $\{x,y\}$, an
    \emph{ elementary Nielsen transformation} (e.N.t.) is an element
    of $Aut(F(x,y))$ that is defined by the mappings:\[
    \left\{\begin{array}{l}
        x \mapsto (x^{\epsilon_1}y^{\epsilon_2})^{\epsilon_3}\\
        y \mapsto y \end{array}\right. \tr{or~}
    \left\{\begin{array}{l}
        x \mapsto x\\
        y \mapsto (y^{\epsilon_1}x^{\epsilon_2})^{\epsilon_3}
      \end{array}\right.\] with $\epsilon_1,\epsilon_3 \in \{-1,1\}$
    and $\epsilon_2 \in \{-1,0,1\}$.
  \end{itemize}
\end{defn}

\begin{lem}\label{lem:move-trans-corresp} Suppose $\phi$, given by
  $(x_0,y_0) \in F\times F$, is a rank 2 solution of $w(x,y)=u$, let
  \[\xymatrix{(x_0,y_0)\ar[r]_{m_1} & \ldots \ar[r]_{m_n} &
    (x_n,y_n)}\] be a sequence of e.N.m. then\begin{itemize} 
  \item[(i)] there is a corresponding sequence of e.N.t
    $t_1,\ldots,t_n$ such that letting $w_0(x,y)=w(x,y)$ and
    $w_{j+1}(x,y)=t_{j+1}(w_j(x,y))$ we have the equalities
    \bel{eqn:invariants}u=w_0(x_0,y_0)= \ldots = w_n(x_n,y_n) \ee
  \item[(ii)] Let \bel{eqn:associated-auto}\alpha=t_n \circ \ldots
    \circ t_1 \in Aut(F(x,y))\ee then the mapping
    $\phi'=\phi\circ\alpha^\mo:F(x,y)\rightarrow F$ is given by the pair $(x_n,y_n)$
\end{itemize}
\end{lem}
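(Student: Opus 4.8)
The plan is to construct the e.N.t.\ $t_j$ one step at a time, mirroring each e.N.m.\ $m_j$ by the ``same'' substitution but now acting on the free group $F(x,y)$ of \emph{formal} variables rather than on the concrete pair of words. Concretely, an e.N.m.\ sending $(f_1,f_2)$ to $(f_1,(f_2^{\epsilon_1}f_1^{\epsilon_2})^{\epsilon_3})$ (or the symmetric variant) is matched with the e.N.t.\ $t$ that sends $y\mapsto (y^{\epsilon_1}x^{\epsilon_2})^{\epsilon_3}$, $x\mapsto x$ (resp.\ the symmetric one). I would first set up notation: write $\phi_0=\phi$ for the original solution given by $(x_0,y_0)$, and inductively let $\phi_{j}$ be the $F$-homomorphism $F(x,y)\to F$ determined by $x\mapsto x_j$, $y\mapsto y_j$. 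The key bookkeeping identity to verify by induction on $j$ is that $\phi_{j}=\phi\circ(t_j\circ\cdots\circ t_1)^{-1}$, equivalently $\phi_j\circ t_j = \phi_{j-1}$; this is immediate from the definitions once one checks that applying $t_j$ and then evaluating at $(x_j,y_j)$ reproduces evaluation at $(x_{j-1},y_{j-1})$, which is exactly the statement that $m_j$ carries $(x_{j-1},y_{j-1})$ to $(x_j,y_j)$. This simultaneously proves part (ii), since $\phi'=\phi\circ\alpha^{-1}=\phi_n$ is by construction the map given by $(x_n,y_n)$.

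For part (i), define $w_j(x,y)=t_j(w_{j-1}(x,y))$ as in the statement, so that $w_j = (t_j\circ\cdots\circ t_1)(w_0) = \alpha_j(w)$ where $\alpha_j = t_j\circ\cdots\circ t_1$. Then evaluate: $w_j(x_j,y_j)=\phi_j(w_j)=\phi_j(\alpha_j(w))$. Using the identity $\phi_j = \phi\circ\alpha_j^{-1}$ from the previous paragraph, this collapses to $\phi(w)=w(x_0,y_0)=u$, which is the hypothesis that $\phi$ is a solution of $w(x,y)=u$. Hence $u=w_0(x_0,y_0)=w_1(x_1,y_1)=\cdots=w_n(x_n,y_n)$, giving \eqref{eqn:invariants}. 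Note that we must also check that each $\alpha_j$, and in particular $\alpha=\alpha_n$, is genuinely an element of $\mathrm{Aut}(F(x,y))$: this is just the standard fact that elementary Nielsen transformations are automorphisms and composites of automorphisms are automorphisms, so there is nothing subtle here.

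The only real point requiring care — and the ``main obstacle'' such as it is — is the matching between the parameters $(\epsilon_1,\epsilon_2,\epsilon_3)$ appearing in an e.N.m.\ and those in the corresponding e.N.t., and keeping the two slots (acting on the first coordinate vs.\ the second) straight; one wants the \emph{inverse} relationship to come out right so that $\phi_j\circ t_j=\phi_{j-1}$ rather than some twisted version. I would handle this by treating the generic e.N.m.\ $(f_1,f_2)\mapsto(f_1,(f_2^{\epsilon_1}f_1^{\epsilon_2})^{\epsilon_3})$ symbolically: the e.N.t.\ $t\colon x\mapsto x,\ y\mapsto (y^{\epsilon_1}x^{\epsilon_2})^{\epsilon_3}$ satisfies $\phi_{j-1}(x)=x_{j-1}$, $\phi_{j-1}(y)=y_{j-1}$, and $\phi_j(t(y)) = \phi_j\big((y^{\epsilon_1}x^{\epsilon_2})^{\epsilon_3}\big) = (y_j^{\epsilon_1}x_j^{\epsilon_2})^{\epsilon_3}$; since $x_j=x_{j-1}$ and $y_j$ is \emph{defined} to be $(y_{j-1}^{\epsilon_1}x_{j-1}^{\epsilon_2})^{\epsilon_3}$ precisely when one reads off the inverse e.N.m., a one-line substitution check confirms $\phi_j\circ t_j=\phi_{j-1}$ on generators, hence on all of $F(x,y)$. (If instead one sets up $t_j$ to literally copy $m_j$, the same check produces $\phi_{j-1}\circ t_j=\phi_j$, and one takes $\alpha=t_n\circ\cdots\circ t_1$ with the roles of $\phi$ and $\phi'$ swapped; either convention works, and I would pick whichever makes the downstream use of \eqref{eqn:associated-auto} cleanest.) The symmetric case (move on the first coordinate) is verbatim the same with $x$ and $y$ interchanged. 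Everything else is routine.
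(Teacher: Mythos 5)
Your argument is correct and is essentially a written-out version of the paper's own (one-line) proof, which simply defers to the standard correspondence between elementary Nielsen moves on a generating pair and elementary Nielsen transformations of $F(x,y)$, as in Proposition I.4.1 of Lyndon--Schupp. Your explicit induction $\phi_j\circ t_j=\phi_{j-1}$ (with the parameters of $t_j$ read off from the inverse of $m_j$) is exactly the bookkeeping that proof performs, and it correctly yields both (i) and (ii).
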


\begin{proof}[sketch of proof] Noting that a rank 2 solution
  isomorphically identifies the subgroup $\bxyk \leq \FRS$ with a rank
  2 subgroup of a free group, the proof is essentially the same as the
  proof that elementary Nielsen transformations generate the
  automorphisms of a f.g. free group (See Proposition I.4.1. of
  \cite{Lyndon-Schupp-1977}).
\end{proof}

The reader can look at Section I.2 of \cite{Lyndon-Schupp-1977} for
the necessary background for the next lemma.

\begin{lem}\label{lem:canonical-pair} Fix a basis $X$ of $F$, then to
  any subgroup $H\leq F$ of rank $n$ we can canonically associate an
  ordered set of Nielsen reduced generators $(j_1,\ldots,j_n)$,
  moreover this ordered set can be obtained from any ordered $n-tuple$
  of generators $(h_1,\ldots,h_n)$ via a sequence of e.N.m.
\end{lem}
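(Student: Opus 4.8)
The plan is to fix a well-ordering of $F$, use it to make the Nielsen reduction process of Section~I.2 of \cite{Lyndon-Schupp-1977} deterministic, and to recognize its output as a tuple depending only on $H$. Fix the shortlex well-ordering $\prec$ on $F$ determined by $X$: order $X^{\pm 1}$ arbitrarily and declare $g\prec g'$ when $|g|<|g'|$, or $|g|=|g'|$ and $g$ precedes $g'$ lexicographically; extend $\prec$ to $n$-tuples by comparing entries from the left. Then \emph{define} the canonical tuple greedily: let $j_1$ be the $\prec$-smallest nontrivial element of $H$ and, having chosen $j_1,\dots,j_k$, let $j_{k+1}$ be the $\prec$-smallest element of $H\setminus\langle j_1,\dots,j_k\rangle$; since these sets are nested, $|j_1|\le|j_2|\le\cdots$. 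This prescription depends only on $H$ and $X$. It remains to prove (i) that the process halts after exactly $n$ steps, so $J:=(j_1,\dots,j_n)$ generates, hence freely generates, $H$; and (ii) that from any ordered generating $n$-tuple $(h_1,\dots,h_n)$ of $H$ one reaches $J$ by a finite sequence of e.N.m.'s.

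For (i), I would first show that every partial tuple $(j_1,\dots,j_k)$ is Nielsen reduced. Condition $j_i\neq1$ is clear. For (N1): suppose $j_i^{\epsilon}j_j^{\delta}\neq1$ and $|j_i^{\epsilon}j_j^{\delta}|<\max(|j_i|,|j_j|)$ for some $i\neq j$ and signs $\epsilon,\delta$; putting $\ell=\max(i,j)$, monotonicity of lengths makes $j_\ell$ the longer of the two, so the shorter one lies in $\langle j_1,\dots,j_{\ell-1}\rangle$, whence $(j_i^{\epsilon}j_j^{\delta})^{\pm1}\in H\setminus\langle j_1,\dots,j_{\ell-1}\rangle$ exactly when $j_\ell$ is, which holds --- but $(j_i^{\epsilon}j_j^{\delta})^{\pm1}$ is strictly shorter than $j_\ell$, contradicting the $\prec$-minimal choice of $j_\ell$. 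Condition (N2) is handled similarly, or by direct reference to \cite{Lyndon-Schupp-1977}. A Nielsen-reduced set is a free basis of the subgroup it generates, so each $\langle j_1,\dots,j_k\rangle$ is free of rank $k$; and the process halts, because the non-decreasing lengths $|j_k|$ cannot remain bounded (only finitely many elements of the finitely generated group $H$ have a given $X$-length, and all of them are eventually absorbed), so they tend to infinity and a fixed finite basis of $H$ is eventually absorbed --- at which stage $\langle j_1,\dots,j_k\rangle=H$ forces $k=n$. This proves (i).

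For (ii), Nielsen's method carries $(h_1,\dots,h_n)$, via a finite sequence of elementary Nielsen transformations --- which on tuples of elements of $F$ are precisely the e.N.m.'s of the statement --- to \emph{some} Nielsen-reduced basis $(h_1',\dots,h_n')$ of $H$. By the structure theory of Nielsen-reduced sets, all Nielsen-reduced bases of $H$ carry the same multiset of lengths, namely the successive minima of $H$ (which $J$ realizes), and any two of them differ by a Nielsen transformation that only permutes, inverts, and cancellation-freely concatenates entries of equal length; such a transformation is a product of e.N.m.'s. Thus the Nielsen-reduced bases of $H$ form a single finite orbit under length-preserving e.N.m.'s, containing both $(h_1',\dots,h_n')$ and $J$, and a greedy sequence of length-preserving e.N.m.'s making the tuple $\prec$-smaller at each step drives $(h_1',\dots,h_n')$ to the unique $\prec$-least element of the orbit, which must be $J$. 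Composing the two sequences yields (ii).

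The step I expect to be the real obstacle is the claim that any two Nielsen-reduced bases of $H$ are joined by \emph{length-preserving} e.N.m.'s: this is what makes ``the $\prec$-least Nielsen-reduced basis of $H$'' a well-defined invariant, and it is the one place where the Nielsen conditions (N1)--(N2) and the cancellation-free clean-up moves have to be handled carefully, rather than merely the obvious length-decreasing steps. Everything else is routine bookkeeping built on Section~I.2 of \cite{Lyndon-Schupp-1977}.
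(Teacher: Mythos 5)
The paper itself gives no proof of this lemma (it is treated as a standard consequence of Section I.2 of \cite{Lyndon-Schupp-1977}), so your proposal has to stand on its own, and it does not: the central claim that the greedy shortlex tuple is Nielsen reduced is false, and the failure occurs exactly at the step you wave off, condition (N2). Your (N1) argument works because an (N1) violation produces a \emph{strictly shorter} element of $H\setminus\langle j_1,\dots,j_{\ell-1}\rangle$; an (N2) violation only produces elements of \emph{equal} length, so nothing ``similar'' applies, and shortlex minimality does not save you. Concretely, take $F=F(x,y)$ with the order $x\prec x^{-1}\prec y\prec y^{-1}$ and $H=\langle x^2,\,xyx\rangle$. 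The shortest nontrivial elements of $H$ are $x^{\pm2}$, so $j_1=x^2$; the shortest elements of $H\setminus\langle x^2\rangle$ are the eight words $x^{\pm1}y^{\pm1}x^{\pm1}$, of which $xyx$ is shortlex-least, so $j_2=xyx$ and the process stops. But with $v_1=v_3=xyx$, $v_2=x^{-2}$ one has $v_1v_2\neq1\neq v_2v_3$ and
\[ |v_1v_2v_3|=|xy^2x|=4=|v_1|-|v_2|+|v_3|, \]
so (N2) fails; equivalently, $xy^{k+1}x$ has $X$-length $k+3$ but length $2k+1$ in the generators $\{x^2,xyx\}$, which is larger for $k\geq3$, contradicting the defining property of Nielsen reduced sets that the paper actually uses later (in Proposition \ref{prop:finiteness}). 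A Nielsen reduced basis of this $H$ is $(x^2,xyx^{-1})$, which your greedy rule never selects, for any ordering of $X^{\pm1}$. Since the tuple $J$ you construct need not be Nielsen reduced, both halves of your argument collapse: part (i) asserts a false property of $J$, and part (ii) needs $J$ to lie in the orbit of Nielsen reduced bases.

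The known repair is not plain shortlex. In the proof of Proposition 2.2 in Chapter I of \cite{Lyndon-Schupp-1977}, one minimizes a well-ordering that refines length by comparing the initial halves of $w$ and of $w^{-1}$; it is precisely this finer comparison that kills (N2) violations, because such a violation lets one replace $v_1$ or $v_3$ by an element of the same length with a smaller half. Making that reduction deterministic (or, alternatively, reading the basis off the Stallings core graph of $H$ equipped with its shortlex-geodesic spanning tree) yields the canonical tuple. Your strategy for part (ii) --- Nielsen's method to reach \emph{some} reduced basis, then length-preserving moves among the finitely many reduced bases of $H$ --- is the right shape, but the assertion that any two Nielsen reduced bases of $H$ are joined by length-preserving e.N.m.'s is itself a nontrivial statement that must be quoted from (or proved as in) the uniqueness results of Section I.2 of \cite{Lyndon-Schupp-1977}, not merely asserted.
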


We now give names to all of these:
\begin{defn}\label{defn:all-the-names} Let $\phi$, given by
  $(x_0,y_0)$, be a solution of $w(x,y)=u$. Let
  \[\xymatrix{(x_0,y_0)\ar[r]_{m_1} & \ldots \ar[r]_{m_n} &
    (x_n,y_n)}\] be the sequence of e.N.m. that brings the pair
  $(x_0,y_0)$ to the canonical pair $(x_n,y_n)$ of generators of
  $\brakett{x_0,y_0}$ guaranteed by Lemma
  \ref{lem:canonical-pair}. Then we have:\begin{itemize}
  \item The pair $(x_n,y_n)$ is called the \emph{terminal pair} of
    $\phi$ (denoted $tp(\phi)$.)
  \item The word $w_n(x,y)\in \bxyk$ in (\ref{eqn:invariants}) is
    called the \emph{terminal word} of $\phi$ (denoted $tw(\phi)$.)
  \item The automorphism $\alpha \in Aut(F(x,y))$, is the \emph{automorphism
    associated} to $\phi$ (denoted $\alpha_\phi$.)
  \end{itemize}
\end{defn}

\begin{prop}\label{prop:finiteness} Let $S=\{w(x,y)=u\}$ and let
  $U\subset V(S)$ be the open subvariety of rank 2 solutions, then
  there are only finitely many possible terminal pairs and terminal
  words that can be associated to solutions $\phi\in U$.
\end{prop}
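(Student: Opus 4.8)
The plan is to bound the "complexity" of a terminal word $tw(\phi)$ and terminal pair $tp(\phi)$ by showing both are constrained by the fixed word length $|u|_F$ (length in the fixed basis $X$ of $F$), so that only finitely many values are possible.

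**First I would** recall what the terminal pair is: by Lemma \ref{lem:canonical-pair}, $tp(\phi)=(x_n,y_n)$ is the \emph{canonically associated} Nielsen-reduced generating pair of the rank 2 subgroup $H=\brakett{x_0,y_0}\leq F$. Since $\phi$ is a rank 2 solution, $w(x_0,y_0)=_F u$, and after applying the e.N.m.\ sequence we get $w_n(x_n,y_n)=_F u$ by the invariance in (\ref{eqn:invariants}). Now the key geometric fact about Nielsen-reduced tuples (Section I.2 of \cite{Lyndon-Schupp-1977}): if $(x_n,y_n)$ is Nielsen reduced, then for any nontrivial $v=v(x,y)\in F(x,y)$, the reduced length $|v(x_n,y_n)|_F$ is at least $\max(|x_n|_F,|y_n|_F)$ minus a bounded cancellation term — more precisely, each generator occurring in the reduced form of $v$ contributes at least one uncancelled letter, so $|v(x_n,y_n)|_F \geq$ (something growing with $|x_n|_F + |y_n|_F$). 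Applying this with $v = w_n$ and using $|w_n(x_n,y_n)|_F = |u|_F$, I get an a priori bound $|x_n|_F, |y_n|_F \leq C(|u|_F)$. This immediately leaves only finitely many possibilities for the terminal pair $(x_n,y_n)$, since there are finitely many reduced words in $F$ of bounded length.

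**Next**, for the terminal word $tw(\phi)=w_n(x,y)$: here I need a bound on the length of $w_n$ \emph{as a word in $x,y$}. Since $(x_n,y_n)$ is Nielsen reduced and $w_n(x_n,y_n)=_F u$, the standard Nielsen-reduction estimate runs the other way too: the word length $|w_n|_{F(x,y)}$ of $w_n$ in the generators $x,y$ is bounded in terms of $|u|_F$ and $\min(|x_n|_F,|y_n|_F)$. Intuitively, $w_n(x_n,y_n)$ spells out $u$ with "visible" copies of $x_n^{\pm1}, y_n^{\pm1}$, so the number of syllables of $w_n$ is at most $|u|_F$ (up to the bounded-cancellation constant for the Nielsen-reduced pair). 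Hence $w_n$ lies in a finite set of words in $F(x,y)$. Combining the two bounds yields the proposition.

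**The hard part will be** making the Nielsen-reduction inequalities fully precise and checking they apply in our mixed setting — the pair $(x_n,y_n)$ is Nielsen reduced \emph{in $F$} (with respect to the basis $X$), while $w_n$ is an abstract word in $F(x,y)$, and I am evaluating the latter on the former. The cleanest route is to invoke directly the quantitative form of the Nielsen reduction theory from \cite{Lyndon-Schupp-1977}, Section I.2: for a Nielsen-reduced set, there is a constant (depending only on the set) such that the length of any product is at least the sum of the lengths of the "middle parts" of the factors, which in particular forces both $|x_n|_F + |y_n|_F$ and the syllable length of $w_n$ to be controlled by $|w_n(x_n,y_n)|_F = |u|_F$. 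One subtlety to handle carefully: if $w_n$ happened to use only one of $x_n, y_n$ (so $\brakett{x_0,y_0}$ would not be "used up" by $w_n$), then $w_n$ would be a power of a primitive element of $F(x,y)$ — but this is precisely excluded, since $w$ is not primitive and not a proper power and $w_n$ is obtained from $w$ by an automorphism of $F(x,y)$ (Lemma \ref{lem:move-trans-corresp}(i)), so it too is non-primitive and not a proper power; thus $w_n$ genuinely involves both generators and the bound is effective. With this in hand, both finiteness statements follow.
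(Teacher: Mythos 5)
Your proposal is correct in outline but takes a genuinely different route from the paper for the terminal\emph{-pair} bound. The paper works with the Stallings core graph $\Gamma$ of $H=\brakett{x_0,y_0}$: since $w$ is neither primitive nor a proper power, neither is $u$ viewed as an element of $H$, so the reduced path labelled $u$ must traverse all of $\Gamma$ (otherwise $u$ would lie in a rank-one free factor of $H$); hence $|u|_X$ is at least twice the radius of $\Gamma$, while the canonical generators read off $\Gamma$ have length at most twice the radius, giving $|x_n|_X,|y_n|_X\leq|u|_X$. You instead argue directly from the cancellation estimates for Nielsen-reduced sets; the two arguments are morally equivalent (Stallings graphs encode Nielsen reduction), and your treatment of the terminal \emph{word} ($|w_n|_{\{x,y\}}\leq|u|_X$ because each syllable contributes a surviving middle letter) is exactly the paper's. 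You also correctly isolate the degenerate case where $w_n$ involves only one generator and kill it using the fact that primitivity and being a proper power are preserved by $Aut(F(x,y))$; the paper performs the corresponding step inside $H$.

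One step you assert without justification: the Lyndon--Schupp statement that ``the length of a product is at least the sum of the lengths of the middle parts'' only yields $|u|_X\geq|w_n|_{\{x,y\}}$, since the middle parts there are single letters; it does not by itself control $|x_n|_X$ and $|y_n|_X$, and your phrase ``something growing with $|x_n|_F+|y_n|_F$'' is a non sequitur from that clause alone. What you actually need is the further consequence of (N2) together with the non-cascading of cancellation: writing $w_n(x_n,y_n)=u_{i_1}\cdots u_{i_k}$ with each $u_{i_j}\in\{x_n^{\pmo},y_n^{\pmo}\}$, the cancellation between consecutive factors is at most $\tfrac{1}{2}\min(|u_{i_j}|,|u_{i_{j+1}}|)$, whence $|u|_X\geq\sum_{j}|u_{i_j}|-\sum_{j}\min(|u_{i_j}|,|u_{i_{j+1}}|)\geq\max_j|u_{i_j}|$ by a short telescoping induction. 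Since both generators occur among the $u_{i_j}$ (by your non-primitivity argument), this gives $|x_n|_X,|y_n|_X\leq|u|_X$ --- note the bound is by a maximum, not by a function of the sum, which is harmless for finiteness. With that inequality supplied, your argument closes.
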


\begin{proof} Fix a basis $X$ of $F$, we first show finiteness of
  possible terminal pairs. 

  Let $\phi$ be a solution, given by $(x_0,y_0)$ and let
  $H=\brakett{x_0,y_0}\leq F$ and let $\Gamma$ be the Stallings graph
  for $H$ (See, for instance, \cite{Stallings-top-finite-graphs}.)
  Then there is a path in $\Gamma$ with label $u$. We also have that
  Nielsen generators can be read directly off $\Gamma$ (see
  \cite{KM-fold-2002}) as labels of simple closed paths. If we define
  the \emph{radius} of $\Gamma$ to be the distance between the
  basepoint of $\Gamma$ and the ``farthest'' vertex, then we see that
  the length of Nielsen the generators $(x_m,y_m)$ is bounded by two
  times the radius. Moreover since $w(x,y)$ is neither primitive nor a
  proper power in $F(x,y)\approx H$, $u$ is not primitive nor a proper
  power in $H$. It follows that the reduced path in $\Gamma$ labeled
  $u$ must cover the whole graph which means $|u|$ is at least twice
  the radius, hence \[ |x_m|,|y_m|\leq |u|\] so the number of possible
  terminal pairs is bounded.

  Consider now the terminal word $w_n(x,y)$. Since $(x_m,y_m)\in F
  \times F$ is a Nielsen reduced pair we have that\[
  |w_n(x,y)|_{\{x,y\}} \leq |w_n(x_n,y_n)|_X = |u|_X\] which bounds the
  number of terminal words.
\end{proof}

We now connect all these ideas to solutions of equations. The next
observation is obvious but critical.

\begin{lem}\label{lem:restriction} Let $\FRS$ be the coordinate
    group of $w(x,y)=u$, with $w(x,y)$ not primitive, not a proper
    power and such that $w(x,y)$ has a rank 2 solution. Then the group
    of $F$-automorphisms of $\FRS$ are induced by the automorphisms of
    the free subgroup $\brakett{x,y}$ that fix $w(x,y)$.
\end{lem}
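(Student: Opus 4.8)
The plan is to start from the structural identification already at hand: by Lemma~\ref{lem:nss} we may write $\FRS = F *_{u = w(x,y)} \bxyk$, an amalgamated free product whose edge group is $\brakett{u} = \brakett{w(x,y)}$, and this edge group is \emph{maximal cyclic} in each factor --- in $\bxyk \cong F(x,y)$ because $w$ is not a proper power by hypothesis, and in $F$ because Corollary~\ref{cor:u-not-properpower}, together with the standing assumption that $w(x,y)=u$ has a rank~$2$ solution, forbids $u$ from being a proper power in $F$. The statement then amounts to the claim that restriction gives a bijection between $Aut_F(\FRS)$ and the stabiliser of $w(x,y)$ in $Aut(\bxyk)$.

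One inclusion is routine. Given $\psi \in Aut(\bxyk)$ with $\psi(w(x,y)) = w(x,y)$, I would invoke the universal property of the amalgam: the homomorphisms $\mathrm{id}_F\colon F \to \FRS$ and $\psi\colon \bxyk \to \bxyk \hookrightarrow \FRS$ agree on the amalgamated subgroup $\brakett{u} = \brakett{w(x,y)}$ (this is exactly the condition $\psi(w(x,y)) = w(x,y)$, since $u = w(x,y)$ in $\FRS$), so they extend to an endomorphism $\tilde\psi$ of $\FRS$ which is the identity on $F$; feeding $\psi^{-1}$ through the same construction yields a two-sided inverse, so $\tilde\psi \in Aut_F(\FRS)$.

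For the other inclusion, let $\Psi \in Aut_F(\FRS)$. Since $\Psi$ fixes $F$ pointwise it fixes $u$, hence already $\Psi(w(x,y)) = \Psi(u) = u = w(x,y)$; so everything reduces to showing $\Psi(\bxyk) = \bxyk$, after which $\psi := \Psi|_{\bxyk}$ is the desired automorphism and $\Psi = \tilde\psi$. I would first show $\Psi(\bxyk)$ is \emph{conjugate} to $\bxyk$. The point is that $\Psi$, fixing $F$, carries a cyclic JSJ decomposition of $\FRS$ modulo $F$ to another one, so by the uniqueness clause Theorem~\ref{thm:jsj}(4) the two differ only by slidings, conjugations and modifications of boundary monomorphisms; in each of the three cases of Corollary~\ref{cor:allowable-jsj} the $F$-vertex is rigid and meets exactly one edge, whose group is $\brakett{u}$, and collapsing that edge recovers $\FRS = F *_{\brakett{u}} \bxyk$, so these moves leave $\bxyk$ unchanged up to conjugacy. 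Thus $\Psi(\bxyk) = g\,\bxyk\,g^{-1}$ for some $g \in \FRS$. To force $g \in \bxyk$, I would pass to the Bass--Serre tree $T$ of $\FRS = F *_{\brakett{u}} \bxyk$, with base vertices $v_F, v_{xy}$ and connecting edge $e$; from $u = \Psi(u) \in \Psi(\bxyk) = g\,\bxyk\,g^{-1} = \Fix(g\cdot v_{xy})$ we get $g\cdot v_{xy} \in \Fix(u)$, and since $\Fix(u)$ is a subtree containing $e$ and the normaliser of $\brakett{u}$ in $F$ is $\brakett{u}$ (maximality), and likewise the normaliser of $\brakett{w(x,y)}$ in $\bxyk$ is $\brakett{w(x,y)}$, no neighbour of $v_F$ or $v_{xy}$ other than $v_{xy}, v_F$ themselves is $u$-fixed; hence $\Fix(u) = \{v_F, e, v_{xy}\}$, so $g\cdot v_{xy} = v_{xy}$, i.e. $g \in \bxyk$ and $\Psi(\bxyk) = \bxyk$.

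The main obstacle I anticipate is the middle step --- showing $\Psi(\bxyk)$ is conjugate to $\bxyk$ rather than, a priori, some unrelated rank-$2$ complement of $F$ over $\brakett{u}$. This is where one genuinely needs the rigidity of the cyclic JSJ decomposition modulo $F$ (Theorem~\ref{thm:jsj}) together with the explicit classification of Corollary~\ref{cor:allowable-jsj}; by contrast the normaliser computation in the tree is elementary, and the converse direction is immediate.
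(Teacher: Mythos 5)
Your proof is correct, and it is worth saying up front that the paper itself offers no proof of this lemma at all: it is introduced with ``The next observation is obvious but critical'' and stated without argument. Your write-up supplies precisely the two halves that make the statement true but not quite obvious. The easy direction (every $\psi\in\stab(w)\leq Aut(\bxyk)$ extends to an $F$-automorphism, by the universal property of the amalgam $\FRS=F*_{u=w(x,y)}\bxyk$ from Lemma~\ref{lem:nss}) is fine. The substantive direction is your claim that any $\Psi\in Aut_F(\FRS)$ satisfies $\Psi(\bxyk)=\bxyk$, and your two-step argument is sound: first $\Psi(\bxyk)=g\bxyk g^{-1}$ via the uniqueness clause Theorem~\ref{thm:jsj}(4), then $g\in\bxyk$ via the computation $\Fix(u)=\{v_F,e,v_{xy}\}$ in the Bass--Serre tree, which uses exactly the hypotheses that $w$ is not a proper power in $\bxyk$ and (by Corollary~\ref{cor:u-not-properpower} together with the existence of a rank~2 solution) that $u$ is neither trivial nor a proper power in $F$. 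The conjugacy step is the same manoeuvre the paper performs later, in the proof of Proposition~\ref{prop:finite-index}, to conclude $\ahat(H)=H$, so your route is consistent with the paper's methodology rather than divergent from it.

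Two small points to tighten. In case 2 of Corollary~\ref{cor:allowable-jsj} the subgroup $\bxyk$ is not itself a vertex group of the JSJ but the fundamental group of the complementary subgraph (the $H$-vertex together with its loop), so to justify that the moves of Theorem~\ref{thm:jsj}(4) ``leave $\bxyk$ unchanged up to conjugacy'' you should rule out a slide of the loop onto the $F$-vertex; such a slide would require $\langle p\rangle=\langle w\rangle$ in $H$, i.e. $w=p^{\pm1}$, which is impossible because $p$ is primitive in $\bxyk=\langle p,t\rangle$ while $w$ is not. Also, $g\bxyk g^{-1}$ is the \emph{stabiliser} of $g\cdot v_{xy}$, not its fixed set; this is purely notational and does not affect the argument.
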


\begin{prop}\label{prop:scheme} Suppose that $\phi$ and $\phi'$ are
    solutions $\FRS \rightarrow F$ of $w(x,y)=u$. And suppose moreover
  that $tp(\phi) = tp(\phi')$ and $tw(\phi) = tw(\phi')$, then there
  is an automorphism $\beta \in Aut_F(\FRS)$ such that $\phi' =
  \phi\circ\beta$.
\end{prop}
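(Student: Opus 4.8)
The plan is to descend to the free subgroup $\bxyk\leq\FRS$ and then combine Lemmas \ref{lem:move-trans-corresp} and \ref{lem:restriction}. Recall from Lemma \ref{lem:nss} that $\FRS=F*_{u=w(x,y)}\bxyk$, so $\FRS$ is generated by $F$ together with $\{x,y\}$, and $\bxyk$ is free of rank $2$ (Freiheitssatz), which we identify with $F(x,y)$. Since every $F$-homomorphism restricts to the identity on $F$, a solution $\phi:\FRS\to F$ is determined by its restriction $\ol\phi:\bxyk\to F$, and an element of $Aut_F(\FRS)$ is determined by its restriction to $\bxyk$; by Lemma \ref{lem:restriction} the latter is an automorphism of $\bxyk$ fixing $w(x,y)$, and conversely every such automorphism of $\bxyk$ extends over $\FRS$. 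Hence it suffices to produce $\ol\beta\in Aut(\bxyk)$ with $\ol\beta(w(x,y))=w(x,y)$ and $\ol{\phi'}=\ol\phi\circ\ol\beta$; the required $\beta\in Aut_F(\FRS)$ will then be the extension of $\ol\beta$, and $\phi\circ\beta=\phi'$ will follow at once, since both sides agree on $F$ and on $\{x,y\}$.

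To construct $\ol\beta$, I would set $\alpha=\alpha_\phi$ and $\alpha'=\alpha_{\phi'}$, the associated automorphisms in $Aut(F(x,y))$ attached in Definition \ref{defn:all-the-names} (via (\ref{eqn:associated-auto})) to the e.N.m.\ sequences reducing the pairs $(\ol\phi(x),\ol\phi(y))$ and $(\ol{\phi'}(x),\ol{\phi'}(y))$ to their canonical generating pairs. By item (ii) of Lemma \ref{lem:move-trans-corresp}, $\ol\phi\circ\alpha^{-1}$ is the homomorphism sending $(x,y)\mapsto tp(\phi)$, and $\ol{\phi'}\circ(\alpha')^{-1}$ sends $(x,y)\mapsto tp(\phi')$. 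Since $tp(\phi)=tp(\phi')$ and a homomorphism out of the free group $F(x,y)$ is determined by the images of $x$ and $y$, these two homomorphisms coincide, so $\ol\phi\circ\alpha^{-1}=\ol{\phi'}\circ(\alpha')^{-1}$ and hence $\ol{\phi'}=\ol\phi\circ\alpha^{-1}\circ\alpha'$. I then take $\ol\beta:=\alpha^{-1}\circ\alpha'$.

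It remains to check $\ol\beta(w(x,y))=w(x,y)$, and this is precisely where the hypothesis $tw(\phi)=tw(\phi')$ enters. Unwinding the recursion $w_{j+1}(x,y)=t_{j+1}(w_j(x,y))$ of item (i) of Lemma \ref{lem:move-trans-corresp} gives $tw(\phi)=w_n(x,y)=\alpha(w(x,y))$ and, likewise, $tw(\phi')=\alpha'(w(x,y))$. Therefore $\alpha(w(x,y))=\alpha'(w(x,y))$, and applying $\alpha^{-1}$ gives $\ol\beta(w(x,y))=\alpha^{-1}\alpha'(w(x,y))=w(x,y)$, as wanted; this finishes the construction.

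The step I expect to demand the most care is the bookkeeping of composition and variance. In deriving the e.N.t.\ sequence $t_1,\dots,t_n$ from the e.N.m.\ sequence in Lemma \ref{lem:move-trans-corresp}, one must confirm that the identity reads $tw(\phi)=\alpha_\phi(w(x,y))$ (and not $\alpha_\phi^{-1}(w(x,y))$), and correspondingly that it is $\ol\phi\circ\alpha_\phi^{-1}$, rather than $\ol\phi\circ\alpha_\phi$, that realizes the terminal pair; a careful rereading of the proof of that lemma should settle this. Everything else is formal, granting the harmless identification of solutions, and of $F$-automorphisms of $\FRS$, with their restrictions to $\bxyk$, which is legitimate by the amalgam structure of $\FRS$ and Lemma \ref{lem:restriction}. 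For definiteness one may assume $\phi$ and $\phi'$ have rank $2$ --- the rank $1$ solutions having already been described in Section \ref{sec:vs1}, and Lemma \ref{lem:move-trans-corresp} being stated in that setting.
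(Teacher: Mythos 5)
Your proposal is correct and follows essentially the same route as the paper: both define $\beta=\alpha_\phi^{-1}\circ\alpha_{\phi'}$, use $tw(\phi)=tw(\phi')$ to see that $\beta$ fixes $w(x,y)$ so that Lemma \ref{lem:restriction} extends it to an $F$-automorphism of $\FRS$, and use $tp(\phi)=tp(\phi')$ together with Lemma \ref{lem:move-trans-corresp}(ii) to get $\phi\circ\alpha_\phi^{-1}=\phi'\circ\alpha_{\phi'}^{-1}$ and hence $\phi'=\phi\circ\beta$. The extra care you take with the restriction to $\bxyk$ and with the variance in $tw(\phi)=\alpha_\phi(w(x,y))$ is consistent with the paper's conventions.
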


\begin{proof}Let $\phi$ be given by $(x_0,y_0)$ and let $\phi'$ be
  given by $(x_0',y_0')$. Then we have a sequence of e.N.m. \[
  \xymatrix{ (x_0,y_0)\ar[r]_{m_1} & \ldots \ar[r]_{m_n} &
    tp(\phi)=tp(\phi') & \ar[l]^{m'_r} \ldots & \ar[l]^{m'_1}
    (x_0',y_0')}\] And we have automorphisms
  $\alpha_\phi,\alpha_{\phi'}$ such that $\alpha_{\phi}(w(x,y)) =
  \alpha_{\phi'}(w(x,y)) = tw(\phi)$. On one hand we have that
  $\beta=\alpha_{\phi}^\mo\circ\alpha_{\phi'} \in \stw$, so, by Lemma
  \ref{lem:restriction}, $\beta \in Aut(F(x,y))$ extends to an
  automorphism of $\FRS$. We moreover have by Lemma
  \ref{lem:move-trans-corresp} we have that the mappings
  $F(x,y)\rightarrow F$, $\phi'\circ\alpha_{\phi'}^\mo =
  \phi\circ\alpha_{\phi}^\mo$ which means that
  \[\phi'=\phi\circ\alpha_{\phi}^\mo\circ\alpha_{\phi'} =
  \phi\circ\beta\]
\end{proof}

So we have proved that all rank 2 solutions are obtained from a finite
family $\phi_1, \ldots \phi_N$ of solutions and precomposition with
$F-$automorphism of $\FRS$. Nothing so far has been said about
canonical automorphisms. 

\begin{defn}\label{defn:canonical-aut-minimality} Let $\Delta \leq
  Aut(\FRS)$ be the group of canonical $F-$auto\-morphisms of $\FRS$
  associated to a cyclic JSJ decomposition modulo $F$. Let $\phi,\phi'
  \in \Hom(\FRS,F)$, we say $\phi\sim_\Delta \phi'$ if there is a
  $\sigma \in \Delta$ such that $\phi\circ\sigma=\phi'$. $\phi \in
  \Hom(\FRS,F)$ is \emph{minimal} if after fixing a basis $X$ of $F$
  the quantity $l_f=|\phi(x)|+|\phi(y)|$ is minimal among all
  $F$-morphisms in $\phi$'s $\sim_\Delta$ equivalence class.
\end{defn}

We wish to show that there are only finitely many
$\Delta$-\emph{minimal} rank 2 solutions to $w(x,y)=u$. In light of
Proposition \ref{prop:scheme}, this is equivalent to the statement
$[\stab(w):\Delta]<\infty$.

\subsubsection{Proving finite index}
In \cite{BKM-Isomorphism}, it is proved that for freely indecomposable
fully residually free groups, the subgroup canonical automorphism is
of finite finite index in the group of outer automorphisms.
Unfortunately, the result as formulated does not cover the case
involving only automorphisms modulo $F$. We therefore prove this fact
directly. What we will essentially show is that \emph{the internal
  F-automorphisms are of finite index in the whole group of
  F-automorphisms}. The main pillars of the argument are that the JSJ
decomposition is \emph{canonical} in the sense of $(4)$ of Theorem
\ref{thm:jsj} and the following Theorem:

\begin{thm}[Corollary 15.2 of \cite{KM-JSJ}]\label{thm:finite-index}
  Let $G$ be a nonabelian fully residually free group, and let
  $\mathcal{A} = \{A_1,\ldots,A_n\}$ be a finite set of maximal
  abelian subgroups of $G$. Denote by $Out(G;\mathcal{A})$ the set of
  those outer automorphisms of $G$ which map each $A_i \in
  \mathcal{A}$ onto a conjugate of itself. If $Out(G;\mathcal{A})$ is
  infinite, then $G$ has a nontrivial abelian splitting, where each
  subgroup in $\mathcal{A}$ is elliptic. There is an algorithm to
  decide whether $Out(G;\mathcal{A})$ is finite or infinite. If
  $Out(G;\mathcal{A})$ is infinite, the algorithm finds the splitting.
  If $Out(G;\mathcal{A})$ is finite, the algorithm finds all its
  elements.
\end{thm}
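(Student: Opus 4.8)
The plan is to deduce the theorem from the canonical \emph{abelian} JSJ decomposition of a f.g.\ fully residually $F$ group, with the analytic input supplied by a degeneration argument for isometric actions on $\mathbb{R}$-trees. First I would reduce to the case that $G$ is nonabelian and freely indecomposable relative to $\mathcal{A}$. By Grushko's theorem write $G=G_1*\cdots*G_k*L$ with $L$ free and each $A_i$ conjugate into some factor $G_j$; an element of $Out(G;\mathcal{A})$ permutes the conjugacy classes of the non-free factors and, after passing to a finite-index subgroup, fixes each of them and induces automorphisms preserving the induced families $\mathcal{A}\cap G_j$, while the contribution of $L$ is finite unless $L\neq 1$, in which case the free splitting of $G$ is already a nontrivial abelian splitting with $\mathcal{A}$ elliptic. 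So I may assume $G$ is nonabelian and freely indecomposable relative to $\mathcal{A}$.

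Second, for the implication ``$Out(G;\mathcal{A})$ infinite $\Rightarrow$ nontrivial abelian splitting with $\mathcal{A}$ elliptic'' I would run the Bestvina--Paulin construction in the relative setting. Fix a finite generating set $X$ of $G$; for each outer class occurring in $Out(G;\mathcal{A})$ pick a representative $\phi$ minimizing the displacement $\lambda(\phi)=\max_{x\in X}\min_{h\in G}|h\phi(x)h^{-1}|_X$ of the $\phi$-twisted action of $G$ on its Cayley graph. An automorphism is determined by the $X$-images of $X$, and there are only finitely many $X$-tuples of bounded length, so infinitely many outer classes force $\lambda(\phi_n)\to\infty$ along a sequence; rescaling by $1/\lambda(\phi_n)$ and passing to a Gromov limit gives a nontrivial minimal isometric action of $G$ on an $\mathbb{R}$-tree $T$. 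Because $G$ is fully residually $F$ it is commutation-transitive and CSA, which forces the limit action to be stable with abelian arc stabilizers, so the Rips machine, resp.\ Sela's structure theorem for such actions (as used in \cite{R-S-JSJ, Sela-DiophI}), yields a nontrivial graph-of-groups decomposition of $G$ with abelian edge groups. The delicate point is that $T$, and hence this decomposition, be relative to $\mathcal{A}$, i.e.\ that each $A_i$ fix a point of $T$: since $\phi_n(A_i)$ is conjugate to $A_i$ and maximal abelian subgroups of limit groups are malnormal and undistorted, the $\phi_n$-twisted translation length of any $a\in A_i$ equals the cyclically reduced length of an element $\theta_n(a)\in A_i$ for some $\theta_n\in \mathrm{Aut}(A_i)$, and one shows the only way these lengths can grow at the rate $\lambda(\phi_n)$ is that $G$ already splits with some $A_i$ as a vertex group, where the conclusion holds a fortiori. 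Hence every element of $A_i$, and so --- being abelian --- $A_i$ itself, is elliptic in $T$, and ellipticity is preserved by the Rips machine, proving the first assertion.

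Third, for the dichotomy and the algorithm I would pass to the canonical abelian JSJ decomposition $J$ of $G$ relative to $\mathcal{A}$ --- the analogue of Theorem \ref{thm:jsj} in which edge groups are allowed to be abelian and the $A_i$ are additionally required to be elliptic. Because $J$ is canonical, every element of $Out(G;\mathcal{A})$ permutes the conjugacy classes of its vertex and edge groups, so, up to a finite-index subgroup, its action factors through the automorphisms fixing the underlying graph and each rigid vertex group; a relative version of the argument of \cite{BKM-Isomorphism} (again via the shortening argument of the previous step) then shows that the group $\Delta_J$ of canonical $F$-automorphisms of $J$ --- Dehn twists along edges and along cyclic refinements of surface-type vertices, together with automorphisms of abelian vertex groups fixing their incident edge groups --- has finite index in $Out(G;\mathcal{A})$. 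Consequently $Out(G;\mathcal{A})$ is infinite if and only if $\Delta_J$ is infinite, which happens exactly when $J$ exhibits a flexible feature: a surface-type vertex group, an abelian vertex group whose rank exceeds the number of incident edge groups, or an edge whose Dehn twist has infinite order; in each such case a one-edge splitting read off from $J$ is the desired nontrivial abelian splitting with $\mathcal{A}$ elliptic, and in the absence of all such features $\Delta_J$, hence $Out(G;\mathcal{A})$, is finite.

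Finally, since finitely presented fully residually $F$ groups are given effectively and one can algorithmically construct the abelian JSJ of a limit group relative to a prescribed finite family of maximal abelian subgroups (by the algorithms underlying \cite{KM-IrredII, Sela-DiophI, KM-JSJ}, e.g.\ via Makanin--Razborov diagrams), one can decide which branch of the dichotomy holds and, in the infinite case, output the flexible piece of $J$ as the splitting. In the finite case $Out(G;\mathcal{A})$ is the union of finitely many cosets of $\Delta_J$; $\Delta_J$ is then trivial or finite, the rigid part is a rigid limit group with finite and computable outer automorphism group, and one enumerates the elements of $Out(G;\mathcal{A})$ by checking which $X$-tuples of bounded length define automorphisms of $G$ preserving $\mathcal{A}$. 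The main obstacle I anticipate is precisely the two places where the word ``relative'' does real work: proving that the $A_i$ remain elliptic in the limit $\mathbb{R}$-tree and survive the Rips machine, so that the finite-index statement $[Out(G;\mathcal{A}):\Delta_J]<\infty$ genuinely holds in the relative category; and making the construction of the relative abelian JSJ, together with the outer automorphism group of a rigid limit group, fully effective.
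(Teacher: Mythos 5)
The first thing to say is that the paper does not prove this statement at all: it is quoted verbatim as Corollary 15.2 of \cite{KM-JSJ} and used as a black box in the proof of Proposition \ref{prop:finite-index}. So there is no internal proof to compare against; what you have written is an outline of how one would reprove a deep external result, and it has to be judged on its own terms.

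On those terms, your sketch of the first (non-effective) assertion is the standard and essentially correct route: Grushko reduction relative to $\mathcal{A}$, a Bestvina--Paulin limiting action on an $\mathbb{R}$-tree obtained from an infinite sequence of shortest representatives of distinct outer classes, stability and abelian arc stabilizers from the CSA property of fully residually free groups, and the Rips machine to extract an abelian splitting. The two weak points you yourself flag are real, and the first is a genuine gap as written: the claim that each $A_i$ is elliptic in the limit tree is exactly the place where ``relative'' does work, and ``one shows the only way these lengths can grow at the rate $\lambda(\phi_n)$ is that $G$ already splits'' is not an argument --- the usual proof instead chooses base points and representatives minimizing displacement subject to the constraint that the $A_i$ are sent to conjugates of themselves, and then bounds the twisted translation lengths of generators of the $A_i$ directly (or works with actions of the pair $(G,\mathcal{A})$), which you have not done. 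The larger gap is the second half of the statement: the decidability of finiteness of $Out(G;\mathcal{A})$, the computation of all its elements when finite, and the effective production of the splitting when infinite. These algorithmic clauses are the substantive content of Corollary 15.2 of \cite{KM-JSJ}; your proposal derives them from ``the algorithms underlying \cite{KM-IrredII, Sela-DiophI, KM-JSJ}'', i.e.\ from an effectively constructible relative abelian JSJ decomposition and a computable finite-index subgroup $\Delta_J$, but the effective construction of that JSJ (and the computability of the outer automorphism group of the rigid part, and of a bound on the index $[Out(G;\mathcal{A}):\Delta_J]$ sharp enough to enumerate coset representatives) is precisely what is being asserted, so as it stands this part is circular rather than proved. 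A complete argument would need the effective JSJ machinery of \cite{KM-JSJ} (canonical representatives, effective splitting detection, etc.), which is far beyond the outline given.
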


This next lemma follows immediately from the fact that in free groups
$n^{th}$ roots are unique and centralizers of elements are cyclic.

\begin{lem}\label{lem:stable-letter-fun}Let $\bxyk$ be a free group
  and suppose \[\bxyk = \brakett{H,t|t^\mo pt=q}; p,q \in H-\{1\}\]
  Suppose that for some $g\in \bxyk$ we have the equality \[g^\mo p g
  = q\] then $g=tq^j$ for some $j\in \Z$.
\end{lem}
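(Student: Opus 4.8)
The plan is to exploit the HNN structure of $\bxyk = \brakett{H,t \mid t^\mo p t = q}$ together with Britton's Lemma to pin down the form of any element $g$ conjugating $p$ to $q$. First I would write $g$ in reduced HNN normal form relative to the stable letter $t$, say $g = h_0 t^{\epsilon_1} h_1 t^{\epsilon_2} \cdots t^{\epsilon_k} h_k$ with $h_i \in H$ and no pinch (i.e.\ no subword $t^\mo h t$ with $h \in \brakett{p}$, nor $t h t^\mo$ with $h \in \brakett{q}$). The hypothesis $g^\mo p g = q$ rewrites as $p g = g q$, and since $p, q \in H$ both sides are elements of $\bxyk$; I would compare their reduced forms. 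On the left, $p g = (p h_0) t^{\epsilon_1} h_1 \cdots$; on the right, $g q = h_0 t^{\epsilon_1} h_1 \cdots t^{\epsilon_k}(h_k q)$. Since the $t$-syllable length must agree and, by Britton's Lemma, the reduced form is essentially unique, the syllable counts force these two expressions to match syllable by syllable after possibly absorbing pinches.

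The key step is to show $k = 1$. If $k = 0$ then $g = h_0 \in H$, and $h_0^\mo p h_0 = q$ inside the free group $H$; but in a free group two elements are conjugate only if they are ``cyclically the same'', and one then has to rule this out — this is exactly the situation the HNN extension is built to forbid, so I would argue that if $p$ and $q$ were conjugate in $H$ the HNN extension $\brakett{H,t\mid t^\mo p t = q}$ would contain a Baumslag–Solitar-type or rank-2 abelian obstruction, contradicting that $\bxyk$ is free (as in the argument already used in the proof of Corollary \ref{cor:allowable-jsj}). For $k \geq 1$: examining the outermost syllables of $pg$ versus $gq$ and using that $p h_0$ and $h_0$ occupy the same syllable position forces, after an inductive peeling of syllables from both ends, that all interior $t$-syllables cancel via pinches except one; the surviving relation is of the form $t^{\mo} (p') t = q'$ for conjugates $p'$ of $p$, $q'$ of $q$, which by the defining relation of the HNN extension and uniqueness of roots in $H$ forces $g$ to have the single-syllable shape $g = t q^j$.

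Concretely, once $k = 1$ is established I would write $g = h_0 t^{\epsilon_1} h_1$; the equation $p h_0 t^{\epsilon_1} h_1 = h_0 t^{\epsilon_1} h_1 q$ and Britton's Lemma (no-pinch condition) force $\epsilon_1 = 1$, $h_0 \in \brakett{q}$ (to avoid a pinch on the right after multiplying by $q$, using that the HNN relation reads $t p t^\mo = \ldots$, hence the only powers that can be shuffled across $t$ are powers of $q$ on the $t$-side), and then reading off the $H$-syllables gives $p = h_0 p h_0^\mo$ forcing $h_0$ to commute with $p$; since centralizers in the free group $H$ are cyclic and $p$ is not a proper power in the relevant reduction (or one simply absorbs the ambiguity), $h_0$ is a power of $q$ only if it is trivial, while $h_1$ is forced into $\brakett{q}$ by the same pinch analysis, yielding $g = t q^j$. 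The main obstacle I anticipate is the bookkeeping in the syllable-cancellation argument showing $k \le 1$ — making the ``peel from both ends'' induction precise while correctly tracking which coset ($\brakett{p}$ or $\brakett{q}$) each absorbed element must lie in; everything else is a direct application of Britton's Lemma and uniqueness of roots and cyclic centralizers in free groups, exactly the two facts the lemma's preamble flags.
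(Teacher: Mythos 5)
Your Britton's-Lemma route is much heavier than what the paper intends, and as written it has two genuine gaps. The paper's argument is a one-liner: from $g^\mo p g = q = t^\mo p t$ one gets that $gt^\mo$ centralizes $p$ (equivalently, $t^\mo g$ centralizes $q$), so the full solution set of $g^\mo p g = q$ is the coset $C(p)\,t = t\,C(q)$; centralizers in a free group are cyclic, generated by the maximal root, and in the situation where the lemma is applied (Lemma \ref{lem:allowable-splittings} gives $H=\brakett{p}*\brakett{q}$ and $\bxyk=\brakett{p,t}$) the element $q$ is not a proper power, so $C(q)=\brakett{q}$ and $g=tq^j$. No normal forms are needed; this is exactly what the sentence preceding the lemma (``roots are unique and centralizers are cyclic'') is pointing at.

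Concerning your sketch: first, the reduction to $k\le 1$ is the entire content of the combinatorial approach, and you leave it as an anticipated obstacle rather than an argument. To carry it out you must note that the only possible pinch in the $2k$-syllable word $g^\mo p g$ sits at its center, and that each successive forced pinch either lands inside the reduced word $g$ itself (contradicting reducedness) or forces $p$ to be conjugate in $H$ to $q^{\pm1}$; the case $p\sim_H q$ is excluded by your $\Z\oplus\Z$ obstruction, but the case $p\sim_H q^\mo$ needs a separate exclusion (it yields a stable letter $t'$ with $t'^\mo p t' = p^\mo$, a Klein-bottle relation impossible in a free group). Second, your $k=1$ bookkeeping is wrong: Britton's Lemma applied to $h_1^\mo t^\mo (h_0^\mo p h_0) t h_1 = q$ forces $h_0^\mo p h_0\in\brakett{p}$, hence $h_0\in C_H(p)$, and then $h_1\in C_H(q)$ --- not $h_0\in\brakett{q}$ as you assert. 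The conclusion $g=p^i t q^l = tq^{i+l}$ then requires $C_H(p)=\brakett{p}$ and $C_H(q)=\brakett{q}$, and this cannot be ``absorbed'': for $H=F(a,b)$, $p=a^2$, $q=b$ (so $\bxyk=F(a,t)$), the element $g=at$ conjugates $p$ to $q$ but is not of the form $tq^j$. So the unique-roots input, supplied in context by $H=\brakett{p}*\brakett{q}$, is essential and must be invoked explicitly rather than waved away.
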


\begin{prop}\label{prop:finite-index}$\Delta \leq Aut(F(x,y))$ is of
  finite index in $\stw$
\end{prop}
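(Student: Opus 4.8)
The plan is to split according to the three cases of Corollary \ref{cor:allowable-jsj}, since in each case we have an explicit description of $\Delta$ and of the relevant splitting of $\bxyk$, and we want to bound the index $[\stw:\Delta]$. The unifying principle, as advertised, is that the inner (Dehn-twist / abelian-vertex) $F$-automorphisms already exhaust all but finitely many cosets in $\stw = \stab(w) \leq Aut(F(x,y))$. By Lemma \ref{lem:restriction} we are genuinely working inside $Aut(F(x,y))$ with the constraint of fixing $w$, so everything reduces to a statement about the stabilizer of a (non-primitive, non-proper-power) element of a rank $2$ free group.

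First I would dispose of case $1.$ In that case $\FRS \approx F*_{u=w(x,y)}\bxyk$ and the JSJ splitting of $\bxyk$ modulo $w$ is trivial; $\Delta = \bgwk$ is generated by the single Dehn twist $\gamma_w$ along the amalgamating edge. Here the key input is Theorem \ref{thm:finite-index} applied to $G = \FRS$ (nonabelian fully residually $F$) together with $\mathcal{A}$ a suitable finite set of maximal abelian subgroups — at minimum the maximal abelian subgroup $\bgwk$ containing the edge group and the maximal abelian subgroups of $F$. If $[\stw:\Delta]$ were infinite, the induced outer automorphism group would be infinite (the finitely many ambiguities coming from inner automorphisms of the amalgam are controlled), so by Theorem \ref{thm:finite-index} $\FRS$ would admit a further nontrivial abelian splitting with these subgroups elliptic — equivalently a nontrivial cyclic splitting of $\bxyk$ modulo $w$. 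But case $1.$ is precisely the case where by Lemma \ref{lem:allowable-splittings} and Corollary \ref{cor:allowable-jsj} there is none, a contradiction. Hence $[\stw:\Delta]<\infty$.

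For case $2.$, where $\bxyk = \brakett{H,t\mid t^\mo p t = q} = \brakett{p,t}$ with $w \in H$, the group $\Delta = \brakett{\gamma_w,\tau}$ also contains the HNN Dehn twist $\tau$. The extra structural fact I would use is Lemma \ref{lem:stable-letter-fun}: any $g \in \bxyk$ conjugating $p$ to $q$ has the form $tq^j$. This rigidifies how an element of $\stw$ can act on the splitting data: an $F$-automorphism stabilizing $w$ must (after composing with inner automorphisms and with $\gamma_w$) send the edge group and stable letter of the JSJ splitting of $\bxyk$ modulo $w$ to something equivalent, and Lemma \ref{lem:stable-letter-fun} forces the stable letter to change only by a power of $\tau$; uniqueness of the cyclic splitting up to the moves of $(4)$ of Theorem \ref{thm:jsj} then pins down the rest. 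So every coset of $\Delta$ in $\stw$ is represented by an automorphism fixing the splitting, and the space of such is finite (the underlying graph of groups is a single rigid HNN over $\brakett{p}$, whose "graph-of-groups automorphisms modulo $w$" are finite modulo the Dehn twist). Again invoking Theorem \ref{thm:finite-index} to rule out hidden further splittings closes the argument.

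Case $3.$ is the QH case: $\FRS \approx F*_{u=w(x,y)}Q$ with $Q = \brakett{x,y,w\mid[x,y]w^\mo}$ the once-punctured-torus group, and $\Delta$ contains $\gamma_w$ together with the two "surface" Dehn twists $\delta_x,\delta_y$, which between them generate (a finite-index subgroup of) the mapping class group of the once-punctured torus — the group $\stab(w) = \stab([x,y])$ in $Aut(F(x,y))$ is, modulo inner automorphisms, exactly $\mathrm{Out}$ of the punctured torus, classically isomorphic to $GL_2(\Z)$, and the subgroup generated by the two elementary transvections $\delta_x,\delta_y$ is $SL_2(\Z)$, of index $2$. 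So here $[\stw:\Delta]$ is finite by an explicit classical computation; I would just cite this and note the compatibility with $\gamma_w$ (which accounts for the inner-automorphism/boundary direction). The main obstacle, and the place I would spend the most care, is case $2.$: making precise the claim that "after modifying by inner automorphisms and $\gamma_w$, an element of $\stw$ preserves the JSJ splitting of $\bxyk$ modulo $w$ up to the finitely many allowed moves," since this is where Lemma \ref{lem:stable-letter-fun}, the canonicity clause $(4)$ of Theorem \ref{thm:jsj}, and the bookkeeping of how $\stw$ acts on conjugacy classes of edge groups all have to be combined; cases $1.$ and $3.$ are comparatively soft (an abstract finiteness argument and a classical $SL_2(\Z)$ fact, respectively).
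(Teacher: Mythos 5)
Your proposal follows essentially the same route as the paper: the same case division along Corollary \ref{cor:allowable-jsj}, with Theorem \ref{thm:finite-index} supplying the finiteness in cases 1 and 2, Lemma \ref{lem:stable-letter-fun} together with clause (4) of Theorem \ref{thm:jsj} controlling the stable letter in case 2, and a classical computation in the commutator case (the paper simply cites Mal'cev to conclude $\stw=\Delta$ there; note that the image of $\stab([x,y])$ in $Out(F(x,y))$ is $SL_2(\Z)$ rather than $GL_2(\Z)$, since $[x,y]$ is not conjugate to $[x,y]^{-1}$, but this only changes your claimed index from $2$ to $1$ and does not affect finiteness). The one structural difference is that the paper applies Theorem \ref{thm:finite-index} not to $\FRS$ but to the rank-two vertex group $H$ (with $\mathcal{A}=\{\brakett{w}\}$ in case 1, and modulo $\{\brakett{w},\brakett{p},\brakett{q}\}$ in case 2, after first proving $\widehat{\alpha}(H)=H$ so that restriction gives a homomorphism $\rho:\stw\rightarrow Aut(H)$ whose kernel is $\brakett{\tau}$), which sidesteps the inner-automorphism bookkeeping your version of case 1 would need.
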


\begin{proof}
If $w$ is conjugate to either $[x,y]$ or $[y,x]$ then the result
follows immediately since the $\stw$ coincides with the
automorphisms given in Corollary
\ref{cor:allowable-jsj}. (See, for instance,
\cite{Malcev-1962}.) We first concentrate on the case where the JSJ of
$\FRS$ is as in case $2.$ of Corollary \ref{cor:allowable-jsj}.

Suppose the induced splitting of $\bxyk$ is of the form
\[\bxyk=\brakett{H,t|t^\mo p t=q} p,q \in H-\{1\}\] Let $\alpha \in \stab(w) \leq
Aut(\bxyk)$, then we can extend $\alpha$ to
$\widehat{\alpha}:\FRS\rightarrow \FRS$. We wish to understand the
action of $\widehat{\alpha}$ on $\FRS$. First note that $\ahat$
restricted to $F$ is the identity and $\ahat(\bxyk)=\bxyk$ On the
other hand, $\ahat$ gives another cyclic JSJ decomposition $D_1$
modulo $F$:\bel{eqn:neq-jsj}\FRS = F*_{u=w(x,y)} \brakett{\ahat(H),
  \ahat(t) | \ahat(t)^\mo \ahat(p) \ahat(t) = \ahat(q)}\ee with $w \in
\ahat(H)$. By Theorem \ref{thm:jsj} $(4)$, $D_1$ can be obtained from
$D$ by a sequence of slidings, conjugations and modifying boundary
monomorphisms.

$\ahat(H)\cap F = \brakett{w}$, and $H$ must be obtained from
$\ahat(H)$ as in $(4)$ of Theorem \ref{thm:jsj}, i.e. by slidings,
conjugating boundary monomorphisms and conjugations. The only inner
automorphism of $\FRS$ that fixes $w$ is conjugation by $w^k; k \in \Z$; (use
Bass-Serre theory and properties of free groups) and since $\ahat(H)$
and $H$ are attached to $F$ at $\brakett{w}$, slidings will have no
effect. It follows that $\ahat(H)=H$. Applying Theorem \ref{thm:jsj}
again forces $p,q$ to be conjugate in $H$ to $\ahat(p),\ahat(q)$
[respectively or in the other order]. We now have strong
information enough on the dynamics of $\stab(w)$ to apply Theorem
\ref{thm:finite-index}.

Indeed since $\ahat(H)=H$, we have a natural homomorphism
$\rho:\stab(w)\rightarrow \tilde{\stw}\leq Aut(H)$ given by the
restriction $\alpha \gt \alpha|_H$. Moreover we see that any almost
reduced cyclic splitting of $H$ modulo $\{\brakett{w}, \brakett{p},
\brakett{q}\}$ must be trivial, otherwise contradicting Lemma
\ref{lem:allowable-splittings}.  Let $\pi:Aut(H)\rightarrow Out(H)$ be
the canonical map (i.e. quotient out by $Inn(G)$, the subgroup of
inner automorphisms). It therefore follows from Theorem
\ref{thm:finite-index} that the image $\pi\circ\rho(\stw)=\ol{\stw}$
must be finite.

First note that $Inn(H)\cap \tilde{\stw} = \bgwk$ which means that
\[\ol{\stw}\approx \tilde{\stw}/\bgwk\] and this isomorphism is
natural. Let $\alpha \in \ker{\rho}$ then we must have that $\alpha|_H
= 1$. In particular we have \[\alpha(t)^\mo p \alpha(t)=q\] which by
Lemma \ref{lem:stable-letter-fun} implies that $\alpha(t)=tq^j$ it
follows that $\ker(\rho)\leq \brakett{\tau}$. The other inclusion is
obvious so \[\ker(\rho)=\brakett{\tau}\] There is a bijective
correspondence between subgroups $K$ of $\tilde{\stw}$ and subgroups
of $\stw$ that contain $\brakett{\tau}$ given by $K \gt \rho^\mo(K)$.
Moreover this correspondence sends normal subgroups to normal
subgroups. It follows that $ker(\pi\circ\rho)=\brakett{\tau,\gamma_w}$
and so we get:\[\stw/\brakett{\tau,\gamma_w}\approx \ol{\stw}\] which
is finite. It follows that $[\stw:\brakett{\tau,\gamma_w}]<\infty$.

In the case where $D$, the cyclic JSJ of $\FRS$ modulo $F$ is as in
case $1.$ of Corollary \ref{cor:allowable-jsj} then again elements of
$\alpha\in\stw$ will give new splittings
$\FRS=F*_{u=w(x,y)}\ahat(H)$. Arguing as before, we get that
$\ahat(H)=H$ and we can apply Theorem \ref{thm:finite-index} with
$\mathcal{A}=\{\brakett{w}\}$. We get that $Out(H;\mathcal{A})\approx
\stw/\bgwk$ must be finite, otherwise $H$ could split further,
contradicting the fact that $D$ was a JSJ splitting, and the result
follows.
\end{proof}

By Lemma \ref{lem:restriction}, Propositions \ref{prop:finiteness},
\ref{prop:scheme}, and \ref{prop:finite-index} we get the second half
of our main result:
\begin{prop}\label{prop:rank2-finite} Suppose that $w(x,y)$ is not a
  proper power, nor is it primitive. Then there are finitely many
  $\Delta-$minimal rank 2 solutions to the equation $w(x,y)=u$.
\end{prop}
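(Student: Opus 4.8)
The plan is to assemble Proposition~\ref{prop:rank2-finite} directly from the machinery already in place, so the proof will really be a bookkeeping argument rather than a new idea. First I would recall the setup: since $w(x,y)$ is neither primitive nor a proper power and $w(x,y)=u$ has a rank $2$ solution, Lemma~\ref{lem:nss} gives $\FRS = F*_{u=w(x,y)}\brakett{x,y}$, and Lemma~\ref{lem:restriction} tells us that $\Aut_F(\FRS)$ is identified with $\stw\leq\Aut(F(x,y))$, with $\Delta$ sitting inside as the subgroup of canonical $F$-automorphisms. So the two group-theoretic inputs are: (a) by Proposition~\ref{prop:scheme}, if $\phi,\phi'$ are rank $2$ solutions with $tp(\phi)=tp(\phi')$ and $tw(\phi)=tw(\phi')$ then $\phi'=\phi\circ\beta$ for some $\beta\in\Aut_F(\FRS)=\stw$; and (b) by Proposition~\ref{prop:finite-index}, $[\stw:\Delta]<\infty$.

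Next I would run the counting argument. By Proposition~\ref{prop:finiteness}, among rank $2$ solutions there are only finitely many possible terminal pairs $tp(\phi)$ and finitely many possible terminal words $tw(\phi)$; hence the rank $2$ solutions fall into finitely many classes according to the pair $(tp(\phi),tw(\phi))$. Fix one such class $\mathcal{C}$ and pick a representative $\phi_0\in\mathcal{C}$. By (a), every $\phi\in\mathcal{C}$ has the form $\phi=\phi_0\circ\beta$ with $\beta\in\stw$. Now choose coset representatives $\beta_1,\dots,\beta_k$ for $\Delta$ in $\stw$, which is legitimate by (b); writing $\beta=\beta_i\sigma$ with $\sigma\in\Delta$ we get $\phi=\phi_0\circ\beta_i\circ\sigma=(\phi_0\circ\beta_i)\circ\sigma$, so $\phi\sim_\Delta\phi_0\circ\beta_i$. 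Therefore every rank $2$ solution in $\mathcal{C}$ is $\sim_\Delta$-equivalent to one of the finitely many solutions $\phi_0\circ\beta_1,\dots,\phi_0\circ\beta_k$. Ranging over the finitely many classes $\mathcal{C}$, we conclude that the whole set of rank $2$ solutions meets only finitely many $\sim_\Delta$-classes, and in particular each class contains a $\Delta$-minimal representative (the length function $l_f$ takes values in $\mathbb{Z}_{\geq 0}$, so a minimum is attained in each class). Hence there are finitely many $\Delta$-minimal rank $2$ solutions, which is exactly the assertion.

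One mild point to handle carefully is that Proposition~\ref{prop:finite-index} is stated for $\stw$ when the JSJ of $\FRS$ is in case $1$ or case $2$ of Corollary~\ref{cor:allowable-jsj}; case $3$ (where $w$ is, up to rational equivalence, conjugate to $[x,y]$) is covered separately in the opening lines of that proof, where $\stw$ literally coincides with $\Delta$, so $[\stw:\Delta]=1$ and the argument is the same a fortiori. I would add one sentence noting that these three cases exhaust the possibilities by Corollary~\ref{cor:allowable-jsj}, so (b) holds unconditionally here.

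I do not expect a genuine obstacle: all the hard work --- the finiteness of terminal pairs/words, the reconstruction of a solution from its terminal data up to $\Aut_F(\FRS)$, and the finite-index statement --- has already been done. The only thing to be slightly careful about is the direction of composition and the fact that $\sim_\Delta$ is defined by \emph{pre}composition ($\phi\circ\sigma=\phi'$ with $\sigma\in\Delta$), which matches exactly how $\beta$ and $\sigma$ enter above, so no inversion subtleties arise. The proof is therefore short: quote Proposition~\ref{prop:finiteness}, quote Proposition~\ref{prop:scheme}, quote Proposition~\ref{prop:finite-index}, and combine.
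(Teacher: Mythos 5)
Your proposal is correct and follows exactly the route the paper intends: the paper's ``proof'' of Proposition~\ref{prop:rank2-finite} is just the one-line citation of Lemma~\ref{lem:restriction} and Propositions~\ref{prop:finiteness}, \ref{prop:scheme}, and \ref{prop:finite-index}, and your write-up simply makes the implicit counting over $(tp,tw)$-classes and $\Delta$-cosets explicit, including the correct handling of the composition order and of case~3 of Corollary~\ref{cor:allowable-jsj}. No gaps; this is the same argument, spelled out.
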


\subsection{The description of
  $V(\{w(x,y)u^\mo\})$}\label{sec:description}
These next two results now follows immediately from Proposition
\ref{prop:rank2-finite}, \ref{prop:rank1-solutions}, Corollary
\ref{cor:allowable-jsj}, Lemma \ref{lem:allowable-splittings}
and Theorem \ref{thm:parametrization}.

\begin{thm}Suppose that $w(x,y)=u$ has rank 2 solutions and that
  $w(x,y)$ is not a power of a primitive element. Then the possible
  Hom diagrams are given in Figure \ref{homdiagrams}.
\end{thm}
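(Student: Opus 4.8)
The plan is to assemble the pieces already proven and let Theorem~\ref{thm:parametrization} do the structural work. First I would dispose of the rank~1 behaviour: by Lemma~\ref{lem:u-nontrivial} and Corollary~\ref{cor:u-not-properpower}, if $w(x,y)=u$ has a rank~2 solution then either all other solutions are rank~1 and these are classified by Proposition~\ref{prop:rank1-solutions} (giving the branches \eqref{eqn:case1} and \eqref{eqn:case2} into $F_1$), or $u$ is conjugate into the ``surface'' situation where \eqref{eqn:gcd} is impossible and there are no rank~1 solutions at all. Either way the rank~1 part of $V(S)$ contributes exactly the decorated sub-branch $\FRS\to F_1\to F$ (with the loop labelled by $\Delta_1$) or collapses, matching the top of each picture in Figure~\ref{homdiagrams}.

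Next I would invoke Corollary~\ref{cor:allowable-jsj} to pin down the three possible cyclic JSJ decompositions of $\FRS$ modulo $F$. For each of the three cases, the group $\Delta$ of canonical $F$-automorphisms is given explicitly there: $\bgwk$ in case~1, $\brakett{\gamma_w,\tau}$ in case~2, and the group generated by $\gamma_w,\delta_x,\delta_y$ in case~3. By Proposition~\ref{prop:rank2-finite} there are finitely many $\Delta$-minimal rank~2 solutions $\phi_1,\dots,\phi_N$, and by Proposition~\ref{prop:scheme} together with Lemma~\ref{lem:restriction} and Proposition~\ref{prop:finite-index} every rank~2 solution is obtained from one of these by precomposition with an element of $\Delta$ (the point being that $[\stab(w):\Delta]<\infty$, so after enlarging the finite family the orbit under $\stab(w)$ is covered by the orbit under $\Delta$). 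This is precisely the data of a single leaf of the Hom diagram carrying the free group $\FRS$ itself (or, in the surface case, its quotient, but $\FRS$ is already freely indecomposable and fully residually $F$ by Lemma~\ref{lem:nss}), decorated by the loop $\Delta$ — again matching the bottom of each picture.

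Finally I would assemble the two strata. Theorem~\ref{thm:parametrization} guarantees a Hom diagram; what remains is to check that the one forced on us by the above has no superfluous vertices or edges, i.e. that it really is one of the three in Figure~\ref{homdiagrams}. In cases~1 and~3 the diagram has the single arrow $\pi_1:\FRS\to F$ (or $\to F_1\to F$ for the rank~1 part) with the $\Delta$-loop at $\FRS$; in case~2 the rank~1 solutions already factor through $\pi_3=\pi_2\circ\pi_1$ with loop $\Delta$ by Proposition~\ref{prop:rank1-solutions}(ii), so the $F_1$ vertex is absorbed and the diagram is the degenerate one-vertex picture. Combining with the enumeration of canonical automorphisms from Corollary~\ref{cor:allowable-jsj} shows each resulting diagram is exactly one of $1.,2.,3.$ of Figure~\ref{homdiagrams}.

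I expect the main obstacle to be the bookkeeping that reconciles the rank~1 branch with the rank~2 branch inside a \emph{single} tree: one must be careful that the branch $\FRS\to F_1\to F$ produced in the rank~1 analysis and the $\Delta$-loop at $\FRS$ produced in the rank~2 analysis are compatible (in particular that $\pi_1$ in \eqref{eqn:case1} is the same map throughout and that $\pi_1\circ\gamma_w=\pi_1$, which was checked in the proof of Proposition~\ref{prop:rank1-solutions}), so that the union of the two strata is genuinely the algebraic variety $V(S)$ and not a proper subset. Once that compatibility is in hand the statement is just a matter of reading off the three pictures.
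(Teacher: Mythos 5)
Your proposal is correct and follows essentially the same route as the paper, which gives no detailed argument at all: it simply states that the theorem ``follows immediately'' from Propositions \ref{prop:rank2-finite} and \ref{prop:rank1-solutions}, Corollary \ref{cor:allowable-jsj}, Lemma \ref{lem:allowable-splittings} and Theorem \ref{thm:parametrization} --- exactly the ingredients you assemble. Your more explicit bookkeeping of how the rank~1 branch and the $\Delta$-decorated rank~2 branch fit into a single tree is a faithful (and somewhat fuller) rendering of the intended argument.
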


\begin{thm}\label{thm:main-result} Suppose that $w(x,y)=u$ has rank 2 solutions and that
  $w(x,y)$ is neither primitive nor a proper power. Let $\{\phi_i|
  i\in I\}$ be the collection of $\Delta-$minimal solution. Then
  $V(S)=V(S_1)\cup V'$, where $V'=V(S)-V(S_1)$,is given by the
  following:\begin{enumerate}
  \item $\FRS\approx F*_{u=w(x,y)}\brakett{x,y}$, let $\phi_i(x)=x_i,
  \phi_i(y)=y_i$ then $V(S)=V(S_1)\cup V'$ where \[V'=\{(u^{-n}x_iu^n,
  u^{-n}y_iu^n) | i \in I \tr{~and~} n \in \Z\}\] and if the exponent
  sums $\sigma_x(w),\sigma_y(w)$ of $x,y$ respectively in $w$ are
  relatively prime, then $V(S_1)$ is non empty and is given by
  (\ref{eqn:vs1}).
\item $\FRS\approx F*_{u=w(x,y)}\brakett{H,t|t^\mo p t = q}$,
  $H=\brakett{p,q}$ and we can write $x,y \in \bxyk$ as words
  $x=X(p,q,t), y=Y(p,q,t)$. Let
  $\phi_i(p)=p_i,\phi_i(q)=q_i,\phi_i(t)=t_i$ then we have that
  $V(S)=V(S_1)\cup V'$ where \begin{gather} \notag
\begin{split}
 V'=&\{(X(u^{-n}p_i u^n,u^{-n}q_i
u^n,u^{-n}t_iq_{i}^{m}u^n),\\ &Y(u^{-n}p_i u^n,u^{-n}q_i
u^n,u^{-n}t_iq_{i}^{m}u^n)) \, \mid\, i\in I, n,m \in \Z\}
\end{split}
\end{gather} and if the exponent sums $\sigma_x(w),\sigma_y(w)$ of
    $x,y$ respectively in $w$ are relatively prime, then $V(S_1)$ is
  non empty and is given by (\ref{eqn:vs1}).
\item $\FRS \approx F*_{u=w(x,y)}Q$ where $Q$ is a QH subgroup and, up
  to rational equivalence, $Q=\brakett{x,y,w|[x,y]w^\mo}$. Then
  $V(S_1)$ is empty. Let $\phi_i(x)=x_i, \phi_i(y)=y_i$ then
  \[V(S)=\{(X_\sigma(x_i,y_i),Y_\sigma(x_i,y_i))|\sigma \in \Delta\}\]
  where the words $\sigma(x)=X_\sigma(x,y), \sigma(y)=Y_\sigma(x,y)
  \in\bxyk$.
\end{enumerate}
\end{thm}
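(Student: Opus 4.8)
The plan is to assemble Theorem~\ref{thm:main-result} from the machinery already developed, treating each of the three classes of cyclic JSJ decomposition of $\FRS$ separately as enumerated in Corollary~\ref{cor:allowable-jsj}, and in each case combining the parametrization of $\Hom(\FRS,F)$ from Theorem~\ref{thm:parametrization} with the explicit description of $\Delta$ from Corollary~\ref{cor:allowable-jsj} and the finiteness of $\Delta$-minimal solutions from Proposition~\ref{prop:rank2-finite}. First I would recall the global structure: by Theorem~\ref{thm:parametrization} every $f\in\Hom(\FRS,F)$ of rank $2$ factors as $\rho\circ(\text{composition of canonical automorphisms and epimorphisms})$ through the Hom diagram, and by Proposition~\ref{prop:rank2-finite} the rank $2$ part $V' = V(S)\setminus V(S_1)$ is the union over the finitely many $\Delta$-minimal solutions $\phi_i$ of their $\Delta$-orbits, i.e. $V' = \bigcup_{i\in I}\{(\phi_i\circ\sigma(x),\phi_i\circ\sigma(y)) \mid \sigma\in\Delta\}$. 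The rank $1$ part $V(S_1)$ is handled by Proposition~\ref{prop:rank1-solutions} together with equation~(\ref{eqn:vs1}): it is nonempty precisely when $\gcd(\sigma_x(w),\sigma_y(w))=1$ (when $d>1$ Corollary~\ref{cor:u-not-properpower} shows there are no rank $2$ solutions, so that case is excluded by hypothesis; and in case $3$ equality~(\ref{eqn:gcd}) is impossible as noted in the proof of Proposition~\ref{prop:rank1-solutions}, forcing $V(S_1)=\emptyset$).

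Next I would carry out the three cases by unwinding the action of $\Delta$ on the generators. In case $1$, $\Delta = \bgwk$ is infinite cyclic generated by $\gamma_w$, which conjugates $\bxyk$ by $w = w(x,y)$; under a solution $\phi_i$ the element $w$ maps to $u$, so $\phi_i\circ\gamma_w^n$ sends $x\mapsto u^{-n}x_i u^n$ and $y\mapsto u^{-n}y_i u^n$, giving exactly the stated form of $V'$. In case $2$, $\Delta = \brakett{\gamma_w,\tau}$ where $\tau$ is the Dehn twist $t\mapsto tq$ fixing $\brakett{F,H}$; writing $x = X(p,q,t)$, $y = Y(p,q,t)$ as words in the basis $\{p,q,t\}$ of $\brakett{F,H,t}$ guaranteed by Lemma~\ref{lem:allowable-splittings}, I would compute that $\phi_i\circ\gamma_w^n\circ\tau^m$ sends $p\mapsto u^{-n}p_i u^n$, $q\mapsto u^{-n}q_i u^n$ and $t\mapsto u^{-n}t_i q_i^m u^n$ (the twist contributes the $q_i^m$ factor, and since $u$ is central relative to the conjugation, the orders in which $\gamma_w$ and $\tau$ are applied can be absorbed into the parameters $n,m$ up to reindexing — this needs a short commutation check), yielding the displayed $V'$. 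In case $3$, $Q=\brakett{x,y,w\mid[x,y]w^{-1}}$ is QH with $\Delta$ generated by $\gamma_w$ and the two Dehn twists $\delta_x: x\mapsto yx$, $\delta_y: y\mapsto xy$ coming from the once-punctured torus; since these generate all of $\stab(w)$ (as $w$ is conjugate to $[x,y]$ — cf. the reference to \cite{Malcev-1962} in the proof of Proposition~\ref{prop:finite-index}), every rank $2$ solution lies in a single $\Delta$-orbit, so $V(S)=\{(X_\sigma(x_i,y_i),Y_\sigma(x_i,y_i)) \mid \sigma\in\Delta\}$ where $\sigma(x)=X_\sigma,\sigma(y)=Y_\sigma$.

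The main obstacle I anticipate is the bookkeeping in case $2$: making sure that the two-parameter family $\{\gamma_w^n\tau^m\}$ genuinely realizes all of $\Delta$ on the quotient (recall $\gamma_w$ and $\tau$ need not commute inside $\stab(w)$), and that after applying a solution $\phi_i$ this noncommutative two-generator group collapses to the clean two-parameter family $(n,m)\in\Z^2$ indexing $(u^{-n}p_iu^n, u^{-n}q_iu^n, u^{-n}t_iq_i^m u^n)$. The key point making this work is that conjugation by $u$ in the image commutes with everything in sight because $u$ normalizes $\phi_i(\bxyk)$ in the amalgam $F*_{u=w}\bxyk$ only up to the action on the $H$-part, so one must verify that $\gamma_w$ acts on the $(p,q,t)$ coordinates as simultaneous conjugation and check how this interacts with the $t$-coordinate under $\tau$; once the commutator $[\gamma_w,\tau]$ is shown to lie in $\brakett{\gamma_w}$ (or is otherwise absorbable), the parametrization follows. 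Everything else — the rank $1$ description, the free product with amalgamation form of $\FRS$ in each case, and the finiteness of the index set $I$ — is a direct citation of the preceding results, so the proof is essentially an orchestration: \emph{cite Proposition~\ref{prop:rank2-finite} for the orbit decomposition, cite Corollary~\ref{cor:allowable-jsj} for the shape of $\Delta$, and compute the orbit of each generator of $\bxyk$ (resp.\ $\brakett{H,t}$, resp.\ $Q$) under $\Delta$.}
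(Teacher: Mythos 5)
Your proposal is correct and follows exactly the route the paper takes: the paper's own ``proof'' is a one-line citation of Propositions \ref{prop:rank2-finite} and \ref{prop:rank1-solutions}, Corollary \ref{cor:allowable-jsj}, Lemma \ref{lem:allowable-splittings} and Theorem \ref{thm:parametrization}, and your write-up is precisely that orchestration with the orbit computations made explicit. Your extra commutation check in case 2 is sound (indeed $\gamma_w$ and $\tau$ commute on $\brakett{H,t}$ since $\tau$ fixes $w\in H$), so it only adds detail the paper omits.
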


We finally note that unless $w(x,y)=u$ is orientable quadratic, then
solutions are given by ``one level parametric'' words (see
\cite{Lyndon-1960} for a definition.)

\section{An Interesting Example}
The Hom diagrams given for $w(x,y)=u$ were very simple. In particular,
modulo the slight technicalities of Theorem \ref{thm:main-result} item
1, we can say that; unless $w(x,y)$ is a power of a primitive element;
there are only finitely many minimal solutions to $w(x,y)=u$ with
respect to a group of canonical automorphisms. This translates as the
Hom diagram having only one ``level''. This also means that all
\emph{fundamental sequences} or \emph{strict resolutions} of $\FRS$
have length 1 (see \cite{KM-IrredII} or \cite{Sela-DiophI},
respectively for definitions.) It is natural to ask this holds true
for general equations in two variables. We answer this negatively:

\begin{thm}\label{thm:two-levels} Let $F=F(a,b)$ then the Hom diagram
  associated to the equation with variables $x,y$
  \bel{eqn:megaquation} [a^\mo b a[b,a][x,y]^2x,a]=1 \ee has branches
  corresponding to rank 2 solutions that have length at least $2$.
\end{thm}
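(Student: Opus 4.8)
The strategy is to exhibit a rank 2 solution whose "level-1" coordinate group, after absorbing all canonical automorphisms, still fails to be a free product of $F$ with free variables, so that its Hom diagram must branch further. Concretely, I would first show that the coordinate group $G = \FRS$ of the equation \eqref{eqn:megaquation} is fully residually $F$ and freely indecomposable modulo $F$, so that Theorem~\ref{thm:jsj} applies and $G$ has a cyclic JSJ decomposition modulo $F$. The point of the particular word $E(F,x,y)=[a^\mo b a[b,a][x,y]^2x,a]$ is that, upon rewriting, the relator expresses a conjugate of $a$ as a product involving $[x,y]^2$ and $x$; in $G$ the subgroup $\bxyk$ together with a controlled piece of $F$ should organize into a splitting in which a genuine QH (punctured-surface) vertex group appears — but one whose associated surface is \emph{non-orientable}, so that the surface itself carries a nontrivial cyclic JSJ-type splitting over one of its two-sided simple closed curves. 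This means the cyclic JSJ of $G$ modulo $F$ has a refinement along a curve inside the QH subgroup, and the Dehn twists along that curve are canonical automorphisms of the \emph{refined} level but are only visible at a second level of the diagram.

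The key steps, in order, would be: (1) Rewrite the relator to identify the natural graph-of-groups decomposition of $F[x,y]/\ncl(E)$ — I expect it to be an amalgam of $F$ with $\bxyk$ over a cyclic subgroup carrying the relator word, exactly as in Lemma~\ref{lem:nss}, together with a Britton's-lemma / centralizer-extension embedding to establish full residual $F$-freeness and hence $G = F[x,y]/\ncl(E)$ via Corollary~\ref{cor:obvious}. (2) Prove free indecomposability modulo $F$ as in the lemma following Lemma~\ref{lem:nss}: any free splitting modulo $F$ would force the relator word into a proper free factor of $\bxyk$, which it is not (here one must check $E$ is not primitive and not a proper power in $F(x,y)$, i.e. verify the relevant combinatorics of the word $[a^\mo b a[b,a][x,y]^2x,a]$). (3) Determine the cyclic JSJ of $G$ modulo $F$: I claim the relevant subword $[x,y]^2$ forces the relevant vertex group to be $\pi_1$ of a once-punctured (or twice-punctured) \emph{Klein-bottle-like} / higher-genus non-orientable surface, i.e. a QH subgroup $Q$ for which the allowable-splittings analysis of Lemma~\ref{lem:allowable-splittings} breaks down precisely because $\bxyk$ here is \emph{not} free of rank 2 relative to the constraints — this is where the contrast with the $w(x,y)=u$ case lives. (4) Use item (ii) of the MQH definition and Definition~\ref{defn:canonical-automorphisms}: the canonical automorphisms include Dehn twists along edges obtained from \emph{refinements} of the JSJ along cyclic splittings of the QH subgroup. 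For a non-orientable surface group the two-sided simple closed curves give such a refinement $D'$ with a new edge group $\bra c\kett$; the Dehn twist $\delta_c$ along $c$ does not preserve the original JSJ edge groups, so by Theorem~\ref{thm:parametrization} it cannot be realized by a single canonical automorphism at level 1 followed by a homomorphism to a free group — one needs the intermediate limit group $F_{R(S_{v_2})}$ where the twist is canonical. (5) Conclude that any Hom diagram for $G$ must have a branch of length $\geq 2$ passing through this QH vertex for the generic (rank 2) solutions.

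The main obstacle I anticipate is step (3)–(4): precisely identifying which punctured surface arises as the QH vertex group of the cyclic JSJ of $G$ modulo $F$, and then verifying that its intrinsic cyclic splitting (along a two-sided curve) genuinely produces canonical automorphisms that are \emph{not} expressible at level 1. This requires (a) carefully computing the JSJ — likely by exhibiting an explicit splitting of $F[x,y]/\ncl(E)$ with a surface-group vertex and then invoking the uniqueness clauses (3)–(4) of Theorem~\ref{thm:jsj} to argue it \emph{is} the JSJ, and (b) a Bass–Serre / normal-form argument showing that the curve-Dehn-twist on the non-orientable surface does not lie in the group generated by Dehn twists along JSJ edges and abelian-vertex automorphisms, so that "one level" of canonical automorphisms is provably insufficient. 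A secondary technical point is verifying the elementary word-combinatorial facts about $E(F,x,y)$ (non-primitivity, not a proper power, the exact form of $\bxyk \cap F$), which, while routine, must be done to even enter the JSJ machinery; I would relegate these to a short computation.
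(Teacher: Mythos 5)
Your proposal locates the source of the second level in the wrong place, and the mechanism you describe in steps (3)--(4) cannot work as stated. You propose that the extra level comes from a non-orientable QH vertex group whose internal Dehn twists are ``only visible at a second level.'' But Definition \ref{defn:canonical-automorphisms} explicitly includes, among the level-one canonical automorphisms, the Dehn twists along edges obtained from \emph{refinements} of the JSJ along cyclic splittings of MQH subgroups; so twists along curves inside a QH vertex are already canonical at the first level and cannot force a branch of length $2$. Moreover, the paper's own analysis (Lemma \ref{lem:allowable-splittings} and Corollary \ref{cor:allowable-jsj}) shows a QH vertex only appears when $w$ is conjugate to $[x,y]^{\pmo}$, and the punctured Klein bottle is excluded outright; here the relevant word is $w(x,y)=[x,y]^2x$, which is not even quadratic (the letter $x$ occurs five times), so no QH subgroup arises at all. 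Your premise that ``$\bxyk$ is not free of rank $2$ relative to the constraints'' is also false: the coordinate group is an amalgam over a cyclic subgroup with $\bxyk$ free of rank $2$.

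The idea you are missing is that the equation conceals a centralizer extension of the coefficient group. A Tietze transformation rewrites $[a^\mo b a[b,a][x,y]^2x,a]=1$ as $[x,y]^2x=[a,b]a^\mo b^\mo a t$ together with $[t,a]=1$; that is, it is an equation $w(x,y)=u$ whose right-hand side $u$ lies not in $F$ but in $F_1=\brakett{F,t\mid [t,a]=1}$. One then shows (by embedding into a chain of extensions of centralizers, via explicit elements such as $\x=b^\mo t$, $\y=b^\mo ab$ satisfying $[\x,\y]^2\x=u$) that $\FRS=F_1*_{\brakett{u=w(x,y)}}\bxyk$ is fully residually $F$ and freely indecomposable modulo $F$. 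The two-level phenomenon then follows from: (a) full residual $F$-freeness forces $t$ to be sent to arbitrarily high powers of $a$ across the rank 2 solutions; and (b) no canonical automorphism of $\FRS$ can send $t\mapsto ta^n$, because such an automorphism would require a cyclic splitting of $\FRS$ with a conjugate of $\brakett{a}$ as edge group, in which $u$ is hyperbolic while $\brakett{a}$ is elliptic in the existing splitting, contradicting Theorem \ref{thm:hyp-hyp} together with free indecomposability modulo $F$. Hence finitely many minimal solutions composed with level-one canonical automorphisms cannot exhaust the rank 2 solutions, and the branch must have length at least $2$. None of your five steps supplies either ingredient (a) or (b), so the proposal does not establish the theorem.
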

\begin{proof} First note that via Tietze transformations, we have the
  following isomorphism:\begin{eqnarray*}
    &&\brakett{F,x,y|[a^\mo b a[b,a][x,y]^2x,a] =1}\\
    &&\approx \brakett{F,x,y,t | [x,y]^2x= [a,b]a^\mo b^\mo a t;
      [t,a]=1}\end{eqnarray*} 

   Let $w(x,y)=[x,y]^2x$ and let $u=[a,b]a^\mo b^\mo a t$. We now embed
  $G=\brakett{F,x,y,t | w(x,y)=u, [t,a]=1}$ into a chain of extensions
  of centralizers.  Let $F_1=\brakett{F,t|[t,a]=1}$ and let
  $F_2=\brakett{F_1,s| [u,s]=1}$. Let $\ol{x}=b^\mo t$ and
  $\ol{y}=b^\mo a b$. First note that\[[\ol{x},\ol{y}]^2\ol{x}=((t^\mo
  b)(b^\mo a^\mo b)(b^\mo t)(b^\mo a b))^2(b^\mo t)= [a,b]a^\mo b^\mo
  a t=u\] We now form a double, i.e. we set $x=\ol{x}^s, y = \ol{y}^s$
  and let $H =\bxyk = \brakett{\ol{x},\ol{y}}^s$. By Britton's Lemma
  we have that $H \cap \F_1 = \brakett{u}$ and it follows that
  $\brakett{F,x,y}$ is isomorphic to the amalgam $F_1*_\brakett{u} H = G$.
  Since chains of extensions of centralizers of $F$ are fully
  residually $F$. We have that our equation (\ref{eqn:megaquation}) is
  an irreducible system of equations, we write $\FRS=G$. We note that
  we have the nontrivial cyclic splitting \[ D:~\FRS \approx
  F_1*_\brakett{u=w(x,y)}\bxyk\] moreover since $w(x,y)=[x,y]^2x$
  cannot belong to a basis (see \cite{CMZ-Fab-basis}) of $\bxyk$ we
  have that $\FRS$ is freely indecomposable modulo $F_1$. On the other
  hand, if we take the Grushko decomposition of $\FRS$ modulo
  $F$\[\FRS = \F*K_1*\ldots K_n; ~F \leq \F\] we see that we must have
  $F_1\leq \F$ since $[t,a]=1 \Rightarrow t \in \F$. It follows that
  $\FRS$ is actually freely indecomposable modulo $F$. It follows that
  $D$ can be refined to a cyclic JSJ decomposition modulo $F$.

  Suppose towards a contradiction that all branches of the Hom diagram
  for $\Hom(\FRS,F)$ corresponding to rank 2 solutions had length 1.
  This means that there are finitely many minimal rank 2 solutions
  $\phi:\FRS \rightarrow F$. On one hand the element $t$ must be sent
  to arbitrarily high powers of $a$, since $\FRS$ is fully residually
  $F$. On the other hand, for there to be a canonical automorphism of
  $\FRS$ that sends $t \mapsto ta^n$, there must be a splitting $D'$
  of $\FRS$ with some conjugate of $\brakett{a}$ as a boundary
  subgroup, but $u$ would have to be hyperbolic in such a splitting,
  and since $\brakett{a}$ is elliptic in $D$, we would have an
  elliptic-hyperbolic splitting which by Theorem \ref{thm:hyp-hyp}
  would contradict free indecomposability modulo $F$.
\end{proof}

We now provide some illustration. We determined that $\FRS =
F_1*_\brakett{u=w(x,y)}\bxyk$ with $u=[a,b]a b^\mo a^\mo t$. Now the
mapping $x\mapsto x^u$ and $y\mapsto y^u$ extends to a canonical
automorphism of $\FRS$ and along some branch there must be another
canonical automorphism that maps $t \mapsto ta^r$. By checking
directly we see that $\phi:\FRS\mapsto F$ given by $x=b^\mo a, y=b^\mo
a b$ is a solution, so we can get the family of solutions:
\[x= ([a,b]a b^\mo a^\mo a^n)^m(b^\mo a)([a,b]a b^\mo a^\mo
a^n)^{-m}\] \[y=([a,b]a b^\mo a^\mo a^n)^m(b^\mo a b)([a,b]a b^\mo
a^\mo a^n)^{-m}\] with $n,m$ in $\Z$. Notice that no precomposition by
a canonical automorphism of $\FRS$ can affect the $n$ parameter. It
follows that the set of solution of (\ref{eqn:megaquation}) can not be
given by precomposing a finite collection of maps
$\phi_1,\ldots,\phi_n: \FRS \rightarrow F$ with canonical
automorphisms.

\def\cprime{$'$}

\end{document}